\numberwithin{equation}{section}
\def\b{\boldsymbol}
\def\cP{\mathcal{P}}
\def\R{\mathbb{R}}
\def\N{\mathbb{N}}
\def\Gm{\Gamma}
\def\cS{\mathcal{S}}
\newcommand{\st}{\mbox{ s.t. }}
\newtheorem{theorem}{Theorem}[section]
\newtheorem{lemma}{Lemma}[section]
\newtheorem{proposition}{Proposition}[section]
\newtheorem{remark}{Remark}[section]
\newtheorem{definition}{Definition}[section]
\newcommand{\dif}{\mathrm{d}}
\newcommand{\bprop}{\begin{proposition}}
\newcommand{\eprop}{\end{proposition}}
\newcommand{\M}{\mathcal{M}(\Om)}
\newcommand{\one}{\mathbf{1}}
\newcommand{\inv}{\frac{1}}
\newcommand{\Om}{\Omega}
\newcommand{\x}{\boldsymbol{x}}
\newcommand{\seq}{\operatorname{seq}}
\newcommand{\ddiv}{\mathrm{div}^h}
\newcommand{\boldm}{\boldsymbol{m}}
\newcommand{\nlimsup}{\limsup_{n\rightarrow \infty}}
\newcommand{\ra}{\rightarrow}
\newcommand{\hra}{\Rightarrow}
\newcommand{\boldh}{\boldsymbol{h}}
\newcommand{\dm}{\mathbf{\delta m}}
\newcommand{\email}[1]{\protect\href{mailto:#1}{#1}}
\newcommand\funding[1]{\protect\\ \hspace*{15.37pt}{\bfseries Funding:} #1}
\title{On the convergence of continuous and discrete unbalanced optimal transport models\thanks{Submitted to the editors DATE.\funding{This work is partially supported by the National Key R\&D Program of China No. 2020YFA0712000 and No. 2021YFA1002800, and Shanghai Municipal Science and Technology Major Project (2021SHZDZX0102).
The work of Z. Xiong, Y. Zhu, X. Zhang was partially supported by NSFC (No.11771288; No.12090024).
The work of L. Li was partially supported by NSFC 11901389 and 12031013, and Shanghai Science and Technology Commission Grant No. 21JC1402900. We also thank the Student Innovation Center at Shanghai
Jiao Tong University for providing us the computing services.}}}
\author{Zhe Xiong\thanks{School of Mathematical Sciences, Shanghai Jiao Tong University, Shanghai 200240, China (\email{aristotle-x@sjtu.edu.cn},
\email{zynsjtu@sjtu.edu.cn}).}
\and Lei Li\thanks{Institute of Natural Sciences, Qing Yuan Research Institute, MOE-LSC, Shanghai Jiao Tong University, Shanghai 200240, China (\email{leili2010@sjtu.edu.cn},
\email{xqzhang@sjtu.edu.cn}).}
\and Ya-Nan Zhu\footnotemark[2]
\and Xiaoqun Zhang\footnotemark[3]
}
\begin{document}

\maketitle

\begin{abstract}
We consider a Beckmann formulation  of an unbalanced optimal transport (UOT) problem. The  $\Gamma$-convergence of this formulation of UOT  to  the corresponding optimal transport (OT) problem is established as the balancing parameter $\alpha$ goes to infinity. The discretization of the problem is further shown to be asymptotic preserving regarding the same limit, which ensures that a numerical method can be applied uniformly and the solutions converge to the one of the OT problem automatically. Particularly, there exists a critical value, which is independent of the mesh size, such that the discrete problem reduces to the discrete OT problem for $\alpha$ being larger than this critical value. The discrete problem is solved by a convergent primal-dual hybrid algorithm and the iterates for UOT are also shown to converge to that for OT. Finally, numerical experiments on shape deformation and partial color transfer are implemented to validate  the theoretical convergence and the proposed numerical algorithm.  
\end{abstract}

\section{Introduction}
The concept of optimal transport (OT) was first put forward in 1781 by Monge  \cite{monge1781memoire}  and was relaxed later by Kantorovich \cite{kantorovich2006problem} as a convex linear program. OT has then been extensively applied in various fields, including image processing \cite{ferradans2014regularized,papadakis2015optimal},   machine learning \cite{salimans2018improving,arjovsky2017wasserstein, frogner2015learning}, PDE theory \cite{trudinger2008monge,santambrogio2015optimal} and noise sampling \cite{de2012blue}. 
 We refer the readers to \cite{santambrogio2015optimal, villani2021topics,peyre2019computational}  for overviews of theoretic and computational optimal transport. 
The OT models have been extended to the so-called unbalanced optimal transport or unnormalized optimal transport (UOT) problems \cite{chizat2018unbalanced,figalli2010new,liero2018optimal,gangbo2019unnormalized} for applications involving mass distributions with different masses. Moreover, the UOT models can take into account of the weight change even for probability measures so that they can be used more flexibly \cite{chizat2018unbalanced,piccoli2014generalized}. For example, the UOT distance is applied to deal with the full waveform inverse problem \cite{li2022application} and used for waveform based earthquake location \cite{zhou2018wasserstein}. And in \cite{pmlr-v139-fatras21a} a gradient method based on UOT is put forward, which is employed in the domain adaption problem.


Let us start with the introduction to the OT problems. Suppose $X, Y$ are two topological spaces with probability measures $\nu_1, \nu_2$ respectively. Given a cost function $c: X \times Y \rightarrow \R^+$, the Kantorovich problem is to find a joint measure $\pi$ (called a ``transport plan") on the product space $X \times Y$ such that
\begin{equation}\label{Kantorovich}
    \begin{aligned}
        & \min_\pi \int_{X \times Y} c(x, y) \dif \pi(x, y),\\
        & \st~ \pi(A, Y) = \nu_1(A), \quad \pi(X, B) = \nu_2(B), \quad \text{for}~\forall A \subset X, B \subset Y.
        \end{aligned}
\end{equation}
In later discussions,  we only focus on the case $X=Y=\Omega\subset\R^d$ for a domain $\Omega$. Let $\cP(\Om)$ denote the set of probability measures on $\Om$ and $\mathcal{W}_p(\Om):=\{\mu\in \cP(\Om): \int |x|^p d\mu<\infty \}$. Choosing $c(x, y) = |x-y|^p$ for $p \geq 1$, then \eqref{Kantorovich} induces a  distance between two measures $\rho_0,\rho_1\in \mathcal{W}_p(\Om)$, which is the so-called Wasserstein-$p$ distance $W_p$
\begin{gather}
W_p(\rho_0, \rho_1)=\left(\inf_{\pi \in \Pi(\rho_0, \rho_1)}\int |x-y|^p d\pi\right)^{1/p},
\end{gather}
where $\Pi(\rho_0, \rho_1)$ is the set of all transport plans for $\rho_0$ and $\rho_1$. 
In the case of $p=1$, simplifying the dual problem of the Kantorovich formulation can lead to the following characterization of the $W_1$ distance
\begin{gather}\label{eq:w1characterization}
W_1(\rho_0, \rho_1)=\sup_{\varphi\in \mathrm{Lip}_1(X)}\int \varphi\, d(\rho_0-\rho_1).
\end{gather}
This characterization has important application in the generative models \cite{bernton2017inference, pmlr-v97-dukler19a}.
The dual problem of \eqref{eq:w1characterization} is given by the flow-minimization model introduced by Beckmann \cite[Section 4.2]{beckmann1952continuous,santambrogio2015optimal}:
\begin{equation}\label{eq:BeckmannOT}
 W_1(\rho_0, \rho_1) = \min \left\{\int_\Om \lvert \textbf{m} \rvert \dif x :~ \textbf{m}: \Om \to\R^d,~ \nabla \cdot \boldm=\rho_0 - \rho_1  \right\}.
\end{equation}
The Kantorovich problem mentioned in \eqref{Kantorovich} is often regarded as the static formulation. In \cite{benamou2000computational}, Benamou and Brenier proposed a dynamical version of optimal transport which seeks for a geodesic path between the two measures $\rho_0$ and $\rho_1$ when $\Omega$ is convex. Suppose $\M$ and $(\M)^d$ are the spaces of Radon measures and vector measures on $\Om$ respectively. Let $\rho(\cdot, t)_{0\le t\le 1}\in \mathcal{W}_p(X)$ be a absolutely continuous curve connecting $\rho_0$ and $\rho_1$. Then according to \cite[Theorem 5.14]{santambrogio2015optimal}, there exists a field $\boldsymbol{w}: [0, 1]\to (\M)^d$ such that $\boldsymbol{w}(\cdot, t) \ll \rho(\cdot, t)$ (hence $\boldsymbol{w} = \rho \boldsymbol{v}$ for some vector field $\boldsymbol{v}$) and the following continuity equation holds:
\begin{equation}
    \partial_t \rho(\x, t) + \nabla \cdot \boldsymbol{w}(\x, t) = 0~ \text{on}~ \Om \times [0,1].
\end{equation}
Correspondingly, the $W_p$ distance can be recovered by solving the following problem (see \cite{santambrogio2015optimal} for more details):
\begin{multline}\label{dynamical W_p}
 W_p^p(\rho_0, \rho_1)=
 \inf_{\rho, \boldsymbol{w}} \Bigg\{\int_0^1 \left(\int_{\Om} \frac{\lvert \boldsymbol{w}(\x, t) \rvert^p}{\rho^{p-1}(\x, t)} \dif \x\right) \dif t: \\
 \partial_t \rho(\x, t) + \nabla \cdot \boldsymbol{w}(\x, t) = 0 ~ (\x, t)\in \Om \times (0,1),
~\rho(\x, 0) = \rho_0(\x), \rho(\x,1) = \rho_1(\x) \Bigg\}.
\end{multline}
The Beckmann formulation of $W_1$ can also be derived from this dynamical formulation. In fact, by considering $\boldm(\x)=\int_0^1 \boldsymbol{w}(\x, t) \dif t $, one can obtain \eqref{eq:BeckmannOT}.

To take into account of the mass change, several UOT problems have been proposed and they are connected in various ways \cite{barrett2009partial,caffarelli2010free,piccoli2014generalized,figalli2010optimal}. In particular, the Wasserstein-Fisher-Rao (or Kantorovich-Helliger) distance has been proposed in \cite{chizat2018unbalanced,chizat2018interpolating,kondratyev2016new,liero2018optimal} by added a source into the dynamics.
The corresponding static formulation as an extension of the classical Kantorovich is derived for UOT in \cite{chizat2018interpolating,chizat2018unbalanced, liero2018optimal}, using either so-called semi-couplings  \cite{chizat2018interpolating,chizat2018unbalanced} or the relaxation of the marginal constraints \cite{liero2018optimal}. 

In this paper, we will focus on the generalization of the Wasserstein-1 distance given in \eqref{eq:BeckmannOT}. In particular, we focus on the following Beckmann formulation of an unbalanced OT problem:
\begin{equation}\label{eq:BeckmannUOT}
W_1^\alpha(\rho_0, \rho_1):= \min \left\{\int_\Om \lvert \boldm \rvert + \alpha \lvert \eta \rvert \dif x :~ \boldm: \Om \to\R^d,~ \nabla \cdot \boldm=\rho_0 - \rho_1 + \eta  \right\},
\end{equation}
with suitable boundary conditions. More details can be seen in Section \ref{Problem descriptions}. One way to understand this is through the dynamic formulation of the UOT studied in \cite{chizat2018interpolating}, which is a generalization of the Benamou-Brenier formulation \eqref{dynamical W_p}.
The dynamic formulation is given by
\begin{equation}\label{WFR}
\begin{aligned}
    & \min_{\rho,\boldsymbol{w},\zeta} \int_{0}^{1}\left[\frac{1}{p} \int_{\Om} \frac{\lvert \boldsymbol{w}(\x, t)\rvert^p}{\rho^{p-1}(\x, t)} \mathrm{~d} \x + \alpha^{p} \frac{1}{q} \int_{\Omega} \frac{\lvert\zeta(\x, t)\rvert^q}{\rho^{q-1}(\x, t)} \mathrm{~d} \x\right] \mathrm{d} t,\\
    & \st~ \partial_t \rho(\x, t) + \nabla \cdot \boldsymbol{w}(\x, t) = \zeta(\x, t)~ \text{on}~ \Om \times [0,1],\\
    & \text{with}~ \rho(\x, 0) = \rho_0(\x), \rho(\x,1) = \rho_1(\x),
\end{aligned}
\end{equation}
where $p,q \geq 1$ and $\alpha > 0$ is the weight parameter of the source term. 
The functional in \eqref{WFR} penalizes the transportation with $p$-norm and the source change with $q$-norm respectively. When $p=q\in [1,\infty)$, this dynamic formulation gives a distance. Taking $p =q = 1$, and similarly letting $\boldm(\x) = \int_0^1 \boldsymbol{w}(\x, t) \dif t$ and $\eta(\x) = \int_0^1 \zeta(\x, t) \dif t$, the corresponding Beckmann formulation \eqref{eq:BeckmannUOT} can then be derived. One may refer to Lemma \ref{prop:uot1} for more details. Clearly, in this UOT problem, $\rho_0$ and $\rho_1$ do not necessarily to have the same mass  and the parameter $\alpha$ in problem \eqref{WFR} controls the penalization of the source term. As a last comment, one often requires $\Omega$ to be convex for the dynamic formulation \eqref{dynamical W_p} to give the Wasserstein distances when $p>1$. As can be seen, the Beckmann formulation \eqref{eq:BeckmannOT} for $p=1$ does not require the convexity of $\Omega$ and it is equivalent to \eqref{dynamical W_p}. This means that the convexity of $\Omega$ is not required to study the Beckmann formulation and the dynamical formulation for $p=1$. Analogously, we do not require the convexity of $\Omega$ in \eqref{eq:BeckmannUOT}.

Our main focus in this paper is the connection between the Beckmann formulation for UOT \eqref{eq:BeckmannUOT} and the Beckmann formulation for OT \eqref{eq:BeckmannOT} when $\rho_0$ and $\rho_1$ are probability measures with the same mass, particularly when they are solved numerically using some optimization algorithms. Specifically, we aim to study whether the numerical solution of the UOT one can somehow converge to that for the corresponding OT problem under suitable optimization algorithms.
Such a problem is closely related to the so-called $\Gm$-convergence and some related results have been investigated in literature already. In particular,  \cite{chizat2018unbalanced} gives the corresponding result for some static problems of the unbalanced OT via the $\Gm$-convergence, while \cite{gangbo2019unnormalized} mentions  some numerical evidence of the convergence for the Wasserstein-Fisher-Rao distance. We focus on the Beckmann formulation because it corresponds to the Earth mover distance and has been widely applied in data science \cite{li2018parallel, doi:10.1137/18M1219813,ennaji2020beckmann} and more importantly, it is easier for computation and more suitable for optimization algorithms.

Our contribution can be summarized as follows. First, we establish the $\Gm$-convergence between the Beckmann problem \eqref{eq:BeckmannUOT} and \eqref{eq:BeckmannOT}. Then in discrete settings, we provide an estimate of lower bound of the parameter $\alpha$ for the solution of UOT being the same as the OT problem not just the convergence of the optimal solution. Lastly, the discrete UOT problem can be solved by a primal-dual hybrid gradient method (a.k.a Chambolle-Pock algorithm) \cite{EZC10,chambolle2011first} and we also give the corresponding condition of the parameter $\alpha$ for the reduction of the iterates for UOT to that for OT.

The rest of the paper is organized as follows. First in Section \ref{sc:background}, we provide the definitions of the usual $\Gm$-convergence and the sequence $\Gm$-convergence, and the relationship between them. Particularly in \ref{subsc:useful results}, we summarize some useful lemmas and theorems for $\Gm$-convergence which will be applied in later demonstrations. Then in Section \ref{sc:Convergence from UOT to OT}, we derive the equivalence between the Beckmann formulations to the dynamical ones for both UOT and OT at the beginning, and prove the existence of minimizers of these two problems. After that we establish the $\Gm$-convergence between the UOT and OT problem. Later in Section \ref{sc:discrete}, the finite convergence in discrete problems and the asymptotic preserving property are built and also, we present the iterates of a primal-dual hybrid gradient method for both UOT and OT problems and show the similar convergence between them. At last, in Section \ref{sc:numerical experiments} some numerical experiments on shape deformation and partial color transfer are implemented to validate the theoretical results and the algorithm.  

\section{Background on $\Gamma$-convergence}\label{sc:background}

To investigate the convergence of the optimization problems and their optimizers, one often makes use of the theory of $\Gamma$-convergence \cite{braides2002gamma}. Here we first recall the definitions of the usual $\Gm$-convergence:
\begin{definition}\label{def:Gmconv}
    Let $(f_n)$ be a sequence of functionals on $X$. Define
    \begin{gather}
    \begin{split}
    & \Gamma\hbox{-}\limsup_{n\to\infty} f_n(x)=\sup_{N_x}\limsup_{n\to\infty}\inf_{y\in N_x}f_n(y),\\
    & \Gamma\hbox{-}\liminf_{n\to\infty} f_n(x)=\sup_{N_x}\liminf_{n\to\infty}\inf_{y\in N_x}f_n(y),
    \end{split}
    \end{gather}
    where $N_x$ ranges over all the neighborhoods of $x$. If there exists a functional $f$ defined on $X$ such that
    \begin{gather}
    \Gamma\hbox{-}\limsup_{n\to\infty} f_n=\Gamma\hbox{-}\liminf_{n\to\infty} f_n =f,
    \end{gather}
    then we say the sequence $(f_n)$ $\Gm$-converges to $f$.
\end{definition}
    
The benefit of $\Gm$-convergence is that any cluster point of the minimizers of a $\Gm$-convergent sequence $(f_n)$ is a minimizer of the corresponding $\Gm$-limit functional $f$. This result can be found in many references like \cite{braides2002gamma} and one may also refer to Lemma \ref{lemma1} later.

The verification of $\Gm$-convergence using Definition \ref{def:Gmconv} of the optimization problems in this paper is not that straightforward. Instead, we will make use of the results of $\Gm_{\seq}$-convergence studied in \cite{buttazzo1982gamma} to get some sufficient conditions for $\Gm$-convergence in Definition \ref{def:Gmconv} on product spaces and we will utilize them in our problems.

\subsection{Notations and definitions}   

We first introduce some definition and notations of $\Gm_{\seq}$-convergence in \cite{buttazzo1982gamma}.  Define the operators $\mathcal{L}(\cdot)$ and $\mathcal{G}(\cdot)$ as 
\begin{equation}
    \mathcal{L}(\epsilon) = 
    \left\{
        \begin{aligned}
            & \sup ~~& \epsilon = +1, \\
            & \inf ~~& \epsilon = -1,
        \end{aligned}
    \right.
    \qquad
    \mathcal{G}(\epsilon) = 
    \left\{
        \begin{aligned}
            & \limsup ~~ & \epsilon = +1, \\
            & \liminf ~~ & \epsilon = -1.
        \end{aligned}
    \right.
\end{equation}
Let $(f_n)$ be a sequence of functions defined on a topological space $X$ and
\begin{gather}
\cS(x_0):=\{\{x^n\}\subset X: x^n\to x_0\}
\end{gather}
be the set of sequences that converge to $x_0$. Define the $\Gm_{\seq}$-limits of $(f_n)$ at point $x_0$ as
\begin{equation}
    \Gm_{\seq}(\N^{\epsilon_0}, X^{\epsilon_1})\lim_n f_n(x_0) = \underset{\{x^n\}\in\cS(x_0)}{\mathcal{L}(\epsilon_1)} \underset{n}{\mathcal{G}(\epsilon_0)} f_n(x^n),
\end{equation}
where $\epsilon_i\in \{+1, -1\}$, $i=0,1$. 
    
The relation between $\Gm_{\seq}$-convergence and the usual $\Gm$-convergence is given as follows, and we include a short proof in Appendix \ref{apdx} for our presentation to be self-contained.
\begin{proposition}\label{connection}
    It holds that
    \begin{gather}
    \begin{split}
    & \Gm_{\seq}(\N^+, X^-)\lim_n f_n =\Gamma\hbox{-}\limsup_{n\to\infty} f_n ,\\
    & \Gm_{\seq}(\N^-, X^-)\lim_n f_n = \Gamma\hbox{-}\liminf_{n\to\infty} f_n.
    \end{split}
    \end{gather}
    Consequently, if $f:=\Gm_{\seq}(\N, X^-)\underset{n}{\lim}f_n$ exists, then 
    $(f_n)$ $\Gm$-converges to $f$.
\end{proposition}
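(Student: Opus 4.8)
\noindent\emph{Proof sketch.}
The plan is to unfold the two operators and reduce each identity to a pair of inequalities. With $\mathcal{L}(-1)=\inf$, $\mathcal{G}(+1)=\limsup$ and $\mathcal{G}(-1)=\liminf$, the claims become, for every $x_0\in X$,
\begin{align}
\inf_{\{x^n\}\in\cS(x_0)}\ \limsup_{n\to\infty}f_n(x^n)&=\sup_{N_{x_0}}\ \limsup_{n\to\infty}\ \inf_{y\in N_{x_0}}f_n(y),\label{eq:prop-ls}\\
\inf_{\{x^n\}\in\cS(x_0)}\ \liminf_{n\to\infty}f_n(x^n)&=\sup_{N_{x_0}}\ \liminf_{n\to\infty}\ \inf_{y\in N_{x_0}}f_n(y),\label{eq:prop-li}
\end{align}
where $N_{x_0}$ ranges over the neighborhoods of $x_0$. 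Granting these, the last assertion is immediate: saying that $\Gm_{\seq}(\N,X^-)\lim_n f_n$ exists means the two left-hand sides agree, hence so do the two right-hand sides, that is $\Gm\hbox{-}\limsup_n f_n=\Gm\hbox{-}\liminf_n f_n$, which is precisely $\Gm$-convergence in the sense of Definition \ref{def:Gmconv}.

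First I would prove ``$\ge$'' in \eqref{eq:prop-ls}--\eqref{eq:prop-li}, which uses no countability. Fix a neighborhood $N$ of $x_0$ and any $\{x^n\}\in\cS(x_0)$. Since $x^n\in N$ for all large $n$, one has $f_n(x^n)\ge\inf_{y\in N}f_n(y)$ eventually, so $\limsup_n f_n(x^n)\ge\limsup_n\inf_{y\in N}f_n(y)$, and likewise with $\liminf$; taking the supremum over $N$ and then the infimum over $\{x^n\}$ gives ``$\ge$''.

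The substantive direction is ``$\le$'', which I would obtain by a recovery-sequence (diagonal) construction --- the step that requires the first axiom of countability (as in the $\Gm_{\seq}$ framework of \cite{buttazzo1982gamma}): fix a decreasing countable neighborhood basis $N_1\supseteq N_2\supseteq\cdots$ at $x_0$, and let $L$ denote the right-hand side; we may assume $L<+\infty$. For \eqref{eq:prop-ls}: for each $k$, from $\limsup_n\inf_{y\in N_k}f_n(y)\le L$ pick $m_k$ (strictly increasing in $k$) with $\inf_{y\in N_k}f_n(y)\le L+1/k$ for $n\ge m_k$, and for $m_k\le n<m_{k+1}$ choose $x^n\in N_k$ nearly realizing $\inf_{y\in N_k}f_n(y)$ so that $f_n(x^n)\le L+2/k$ (with $x^n=x_0$ for $n<m_1$). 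Then $x^n\in N_k$ once $n\ge m_k$, hence $x^n\to x_0$, while the bound $L+2/k$ tightens as $n\to\infty$, so $\limsup_n f_n(x^n)\le L$ and the left side of \eqref{eq:prop-ls} is $\le L$. For \eqref{eq:prop-li} the same idea works along a subsequence: from $\liminf_n\inf_{y\in N_k}f_n(y)\le L$ choose $n_1<n_2<\cdots$ with $\inf_{y\in N_k}f_{n_k}(y)\le L+1/k$, pick $x^{n_k}\in N_k$ with $f_{n_k}(x^{n_k})\le L+2/k$, and fill the gaps with $x^n=x^{n_k}$ for $n_k\le n<n_{k+1}$; again $x^n\to x_0$, and $\liminf_n f_n(x^n)\le\liminf_k f_{n_k}(x^{n_k})\le L$ because $\{f_{n_k}(x^{n_k})\}$ is a subsequence of $\{f_n(x^n)\}$.

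I expect the main obstacle to be exactly this ``$\le$'' direction: it is the only place where a sequence must actually be constructed, and it truly requires a countable neighborhood basis at $x_0$ (otherwise sequences do not detect the topology near $x_0$); for \eqref{eq:prop-li} one must also observe that controlling $f_n(x^n)$ merely along a subsequence already bounds the $\liminf$, while the whole sequence $x^n$ still converges to $x_0$. Degenerate cases in which some of the infima equal $-\infty$ are handled by routine truncations and do not affect the reasoning.
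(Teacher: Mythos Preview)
Your proof is correct and follows essentially the same strategy as the paper's: both directions of the inequality are handled as you describe, and the ``$\le$'' direction is obtained by constructing a recovery sequence from a shrinking countable neighborhood basis at $x_0$. Your diagonal construction (with the two indices $k$ and $m_k$) is in fact more careful than the paper's version, which picks $x^n\in B_n=N(x,1/n)$ directly and asserts the intermediate bound $\Gamma\hbox{-}\limsup_n f_n \ge \limsup_n\inf_{y\in B_n}f_n(y)$ without justification; your explicit separation of the neighborhood index from the sequence index avoids this issue.
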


Many functionals in practice are defined on some natural product space. 
For two topological spaces $X$ and $Y$ and $(f_n)$ defined on the product space $X \times Y$, we can similarly define the $\Gm_{\seq}$-limits of $(f_n)$ at point $(x_0, y_0)\in X\times Y$ for $\epsilon_i\in \{+1, -1\}, i=0,1,2$ as
\begin{equation}
    \Gm_{\seq}(\N^{\epsilon_0}, X^{\epsilon_1}, Y^{\epsilon_2}) \lim_n f_n(x_0, y_0) = \underset{\{x^n\}\in\cS(x_0)}{\mathcal{L}(\epsilon_1)} ~\underset{\{y^n\}\in\cS(y_0)}{\mathcal{L}(\epsilon_2)} ~ \underset{n}{\mathcal{G}(\epsilon_0)} f_n(x^n, y^n).
\end{equation}

Here we take the space $X \times Y$ as an example to clarify the notations. Suppose $\epsilon_0 = +1$, $\epsilon_1 = -1$, $\epsilon_2 = -1$, then we have
\begin{equation}
    \Gm_{\seq}(\N^{+}, X^{-}, Y^{-}) \lim_n f_n (x_0, y_0) = \inf_{\{x^n\}\in\cS(x_0)} \inf_{\{y^n\}\in\cS(y_0)} \limsup_n f_n(x^n, y^n),
\end{equation}
where for any given convergent sequence $\{x^n\}\in \cS(x_0)$ and $\{y^n\}\in \cS(y_0)$, the $\limsup$ operator (or $\liminf$) is taken over the functional value sequence $(f_n(x^n, y^n))$ and the $\inf$ (or $\sup$) operator is taken over all the sequence $\{x^n\}\in \cS(x_0)$ and $\{y^n\}\in \cS(y_0)$ converging to $x_0$ and $y_0$ respectively. Moreover, if the $\Gm_{\seq}$-limit is independent of the value of $\epsilon$, then we omit the sign in the $\Gm_{\seq}$-limit, i.e. if
\begin{equation}
    \Gm_{\seq}(\N^{+}, X^{-}, Y^{-}) \lim_n f_n (x_0, y_0) = \Gm_{\seq}(\N^{-}, X^{-}, Y^{-}) \lim_n f_n (x_0, y_0),
\end{equation}
we can write $\Gm_{\seq}(\N, X^{-}, Y^{-}) \lim_n f_n (x_0, y_0)$ for simplicity. The notations are similar for the spaces $X$ and $Y$.

\subsection{Useful results}\label{subsc:useful results}

In this subsection, we summarize several useful lemmas and theorems for $\Gm_{\seq}$-convergence from \cite{buttazzo1982gamma}. In particular, these results provide tools to check  $\Gm$-convergence on product spaces. For the completeness,  we give simplified   proofs of the lemmas and theorem appeared in this section in Appendix \ref{apdx}.

The following lemma  states that any cluster point of the minimizers of a $\Gm$-convergent sequence is the minimizer of the corresponding $\Gm$-limit functional.
\begin{lemma}\label{lemma1}
	Let $X$ be a topological space, and let $(f_n)$ be a sequence of functionals mapping from $X$ to $\bar{\R} = [-\infty, +\infty]$. If
	\begin{equation*}
	    \Gm_{\seq}(\N, X^-)\lim_n f_n = f,
	\end{equation*}
	then 
	\begin{equation}
	    \inf_X f \geq \limsup_n[\inf_X f_n].
	\end{equation}
	Moreover, if there exists a sequence $(x^n)$ converging to some $x_0\in X$, with
	\begin{equation*}
	    \liminf_n f_n(x^n) = \liminf_n [\inf_X f_n],
	\end{equation*}
	then
	\begin{equation}
	    f(x_0) = \inf_X f = \lim_n[\inf_X f_n].
	\end{equation}
\end{lemma}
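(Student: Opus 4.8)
The plan is to unwind the two $\Gm_{\seq}$-limits packaged into the hypothesis and then chase a short chain of inequalities. By the convention fixed above, $\Gm_{\seq}(\N, X^-)\lim_n f_n = f$ means precisely that for every $x_0\in X$ both
\[
\inf_{\{x^n\}\in\cS(x_0)}\limsup_n f_n(x^n) = f(x_0)
\qquad\text{and}\qquad
\inf_{\{x^n\}\in\cS(x_0)}\liminf_n f_n(x^n) = f(x_0),
\]
and I would note at the outset that $\cS(x_0)$ is always nonempty, since it contains the constant sequence $x^n\equiv x_0$, so these infima are genuinely taken over a nonempty set.

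For the first inequality $\inf_X f \geq \limsup_n[\inf_X f_n]$, I would fix an arbitrary $x_0\in X$ and an arbitrary sequence $\{x^n\}\in\cS(x_0)$. Since $f_n(x^n)\geq \inf_X f_n$ for every $n$, passing to $\limsup$ gives $\limsup_n f_n(x^n)\geq \limsup_n[\inf_X f_n]$. Taking the infimum over all $\{x^n\}\in\cS(x_0)$ and invoking the $\N^{+}$ form of the hypothesis yields $f(x_0)\geq \limsup_n[\inf_X f_n]$; since $x_0$ was arbitrary, taking the infimum over $x_0\in X$ gives the claim.

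For the second part, let $(x^n)$ be the given sequence converging to $x_0$ with $\liminf_n f_n(x^n) = \liminf_n[\inf_X f_n]$. Since $(x^n)\in\cS(x_0)$, the $\N^{-}$ form of the hypothesis gives $f(x_0) = \inf_{\{y^n\}\in\cS(x_0)}\liminf_n f_n(y^n)\leq \liminf_n f_n(x^n) = \liminf_n[\inf_X f_n]$. Combining this with the elementary chain $\liminf_n[\inf_X f_n]\leq \limsup_n[\inf_X f_n]\leq \inf_X f\leq f(x_0)$ — whose second step is exactly the first part just proved and whose last step holds because $x_0\in X$ — forces all four quantities to coincide, which is precisely $f(x_0) = \inf_X f = \lim_n[\inf_X f_n]$.

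I do not expect a real obstacle here: the argument is pure monotonicity once the definitions are expanded. The only points requiring care are (i) choosing the appropriate one of the two coincident $\Gm_{\seq}$-limits at each stage — the $\N^{+}$ (i.e. $\limsup$) form for the lower bound and the $\N^{-}$ (i.e. $\liminf$) form for the attainment statement — and (ii) recording that each $\cS(x_0)$ is nonempty so that the infima over sequences are well defined.
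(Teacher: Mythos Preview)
Your proof is correct and follows essentially the same approach as the paper: both parts hinge on the same chain of inequalities, and in particular your second part is identical to the paper's sandwich $\inf_X f \le f(x_0) \le \liminf_n f_n(x^n) = \liminf_n[\inf_X f_n] \le \limsup_n[\inf_X f_n] \le \inf_X f$. The only cosmetic difference is in the first part, where the paper argues via an $\epsilon$-approximate minimizer of $f$ and then extracts a near-optimal recovery sequence, whereas you bound every sequence from below by $\limsup_n[\inf_X f_n]$ and then take the infimum; your version is slightly cleaner since it sidesteps the $\epsilon$-bookkeeping, but the underlying idea is the same.
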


The UOT problem in consideration is naturally defined on a product space and the functional is of the form $J+\one_E$. We introduce some related results in \cite{buttazzo1982gamma} in this regard.  

\begin{lemma}\label{lemma2}
	Let $X, Y$ be two topological spaces and $(f_n), (g_n)$ be two sequences of functionals defined on the product space $X \times Y$ to $\bar{\R}^{+} = [0, +\infty]$, and let $(x_0, y_0) \in X \times Y$. Suppose there exists $a, b \in \bar{\R}^{+}$ such that
	\begin{equation*}
	\begin{split}
	&\Gm_{\seq} (\N, X^-, Y) \lim_n f_n(x_0, y_0) = a,\\
	&\Gm_{\seq} (\N, X, Y^-) \lim_n g_n(x_0, y_0) = b.
	\end{split}
	\end{equation*}
	Then it holds that
	\begin{equation*}
	    \Gamma_{\seq} (\N, X^-, Y^-)\lim_n (f_n + g_n) (x_0,y_0) = a + b.
	\end{equation*}
\end{lemma}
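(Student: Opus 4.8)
The plan is to unwind both sides of the claimed identity into their sequential definitions and then exploit an asymmetry built into the hypotheses. Observe first that, for any sequences of functionals, one always has $\Gm_{\seq}(\N^-,X^-,Y^-)\lim_n (f_n+g_n) \le \Gm_{\seq}(\N^+,X^-,Y^-)\lim_n (f_n+g_n)$, because $\liminf_n(f_n+g_n)(x^n,y^n)\le \limsup_n(f_n+g_n)(x^n,y^n)$ for every admissible pair $\{x^n\}\in\cS(x_0)$, $\{y^n\}\in\cS(y_0)$, and taking the infimum over such pairs preserves this. Hence it suffices to prove the two bounds $\Gm_{\seq}(\N^+,X^-,Y^-)\lim_n (f_n+g_n)(x_0,y_0)\le a+b$ and $\Gm_{\seq}(\N^-,X^-,Y^-)\lim_n (f_n+g_n)(x_0,y_0)\ge a+b$; together they squeeze both the $\N^+$ and the $\N^-$ versions to the common value $a+b$, which is exactly the asserted sign-independent equality $\Gm_{\seq}(\N,X^-,Y^-)\lim_n(f_n+g_n)(x_0,y_0)=a+b$.

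The lower bound is the soft direction. Since the hypothesized value of the $f_n$-limit does not depend on the sign in the $Y$-slot nor in the $\N$-slot, I may read it with $\epsilon_0=\epsilon_2=-1$, i.e. $\inf_{\{x^n\}}\inf_{\{y^n\}}\liminf_n f_n(x^n,y^n)=a$; consequently $\liminf_n f_n(x^n,y^n)\ge a$ for \emph{every} $\{x^n\}\in\cS(x_0)$ and $\{y^n\}\in\cS(y_0)$. Reading the $g_n$-hypothesis with $\epsilon_0=\epsilon_1=-1$ gives likewise $\liminf_n g_n(x^n,y^n)\ge b$ for all such sequences. Superadditivity of $\liminf$ (valid since all terms lie in $\bar{\R}^+$) then yields $\liminf_n (f_n+g_n)(x^n,y^n)\ge a+b$ for every admissible pair, and taking the infimum over pairs gives $\Gm_{\seq}(\N^-,X^-,Y^-)\lim_n(f_n+g_n)(x_0,y_0)\ge a+b$. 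If $a$ or $b$ equals $+\infty$ the same computation goes through with the convention $+\infty+t=+\infty$ for $t\ge0$.

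The upper bound is where the real work lies, and the point is to choose the $\epsilon$-signs so the two infima over sequences are realized \emph{simultaneously}. Fix $\delta>0$ and assume first $a,b<\infty$. Reading the $f_n$-hypothesis with $\epsilon_0=+1$, $\epsilon_1=-1$, $\epsilon_2=+1$ gives $\inf_{\{x^n\}}\sup_{\{y^n\}}\limsup_n f_n(x^n,y^n)=a$, so there is a \emph{single} sequence $\tilde x^n\to x_0$ with $\limsup_n f_n(\tilde x^n,y^n)\le a+\delta$ for \emph{all} $\{y^n\}\in\cS(y_0)$. Reading the $g_n$-hypothesis with $\epsilon_0=+1$, $\epsilon_1=+1$, $\epsilon_2=-1$ gives $\sup_{\{x^n\}}\inf_{\{y^n\}}\limsup_n g_n(x^n,y^n)=b$; applying it to the particular sequence $\tilde x^n$ yields $\inf_{\{y^n\}}\limsup_n g_n(\tilde x^n,y^n)\le b$, hence some $\tilde y^n\to y_0$ with $\limsup_n g_n(\tilde x^n,\tilde y^n)\le b+\delta$. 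Now the uniformity from the first step applies to $y^n=\tilde y^n$, so $\limsup_n f_n(\tilde x^n,\tilde y^n)\le a+\delta$ as well, and subadditivity of $\limsup$ gives $\limsup_n (f_n+g_n)(\tilde x^n,\tilde y^n)\le a+b+2\delta$. Therefore $\Gm_{\seq}(\N^+,X^-,Y^-)\lim_n(f_n+g_n)(x_0,y_0)=\inf_{\{x^n\}}\inf_{\{y^n\}}\limsup_n(f_n+g_n)(x^n,y^n)\le a+b+2\delta$, and letting $\delta\downarrow0$ finishes the case $a,b<\infty$; if $a$ or $b$ is $+\infty$ the bound $\le a+b$ is vacuous.

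The only genuine obstacle is the one just circumvented: the near-minimizing sequences for $f_n$ and for $g_n$ need not coincide, so one cannot simply add the two hypothesized identities. What rescues the argument is precisely the stated insensitivity of the $f_n$-limit to the $Y$-slot (which licenses the ``$\sup$ over $\{y^n\}$'' reading, and hence a choice of $\tilde x^n$ that is near-optimal for $f_n$ uniformly in $y$) together with the insensitivity of the $g_n$-limit to the $X$-slot (which licenses the ``$\sup$ over $\{x^n\}$'' reading, and hence a choice of $\tilde y^n$ matched to the already-fixed $\tilde x^n$). Everything else is the elementary sub/superadditivity of $\limsup$/$\liminf$ on $\bar{\R}^+$ and the monotonicity $\liminf\le\limsup$ used in the reduction step.
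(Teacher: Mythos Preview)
Your proof is correct and follows essentially the same approach as the paper's: for the upper bound you exploit the $\sup_{\{y^n\}}$ reading of the $f_n$-hypothesis together with the $\sup_{\{x^n\}}$ reading of the $g_n$-hypothesis to produce a single compatible pair $(\tilde x^n,\tilde y^n)$, and for the lower bound you use superadditivity of $\liminf$ together with the $\inf$-$\inf$ readings. The paper carries out the upper bound as a two-stage inequality (first over $y^n$ for fixed $x^n$, then over $x^n$), whereas you construct the witnessing pair directly; your exposition is slightly more explicit about which $\epsilon$-sign readings are invoked and handles the $+\infty$ case, but the underlying argument is the same.
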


Suppose $X$ is a topological space and $E$ is a set in $X$, and the indicator function of $E$ is defined as follows:
    \begin{equation}
        \one_E(x) = \left\{
        \begin{aligned}
                & 0, \quad &\text{if $x \in E$},\\
                & +\infty, \quad & \text{otherwise}.
        \end{aligned}
        \right.
    \end{equation}
The following lemma gives a sufficient condition of a sequence of the set indicator functions to be $\Gm_{\seq}$-convergence.

\begin{lemma}\label{lemma3}
    Suppose $\{E_n\}$ is a sequence of sets in space  $X \times Y$. If there exists a set $E_\infty \subset X \times Y$ satisfies the following two conditions:
    \begin{itemize}
        \item If $x^n \rightarrow x$, $y^n \rightarrow y$ and $(x^n, y^n) \in E_n$ for infinitely many $n$, then $(x, y) \in E_\infty$;
        \item If $(x,y) \in E_\infty$ and $x^n \rightarrow x$, then there exists $y^n \rightarrow y$ such that $(x^n, y^n) \in E_n$ for $n$ large enough,
    \end{itemize}
    then $\one_{E_\infty} = \Gm_{\seq}(\N, X, Y^-) \underset{n}{\lim} \one_{E_n}$.
\end{lemma}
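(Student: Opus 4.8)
The plan is to unwind the definition of $\Gm_{\seq}(\N, X, Y^-)\lim_n \one_{E_n}$ at an arbitrary point $(x_0,y_0)$ and show it equals $\one_{E_\infty}(x_0,y_0)$, treating the two cases $(x_0,y_0)\in E_\infty$ and $(x_0,y_0)\notin E_\infty$ separately. Recall that asserting the $\Gm_{\seq}$-limit with the omitted sign means the $\liminf$ and $\limsup$ versions coincide, so I would first observe the general ordering
\begin{equation*}
\Gm_{\seq}(\N^-, X, Y^-)\lim_n \one_{E_n}(x_0,y_0) \le \Gm_{\seq}(\N^+, X, Y^-)\lim_n \one_{E_n}(x_0,y_0),
\end{equation*}
which follows because $\liminf_n \le \limsup_n$ termwise and the $\sup/\inf$ operators preserve the inequality. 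Hence it suffices to prove the $\limsup$-version is $\le \one_{E_\infty}(x_0,y_0)$ and the $\liminf$-version is $\ge \one_{E_\infty}(x_0,y_0)$; since the functions only take values $0$ and $+\infty$, these bounds will pin both quantities to the same value.

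For the upper bound, suppose $(x_0,y_0)\in E_\infty$; I must show $\Gm_{\seq}(\N^+, X, Y^-)\lim_n \one_{E_n}(x_0,y_0)=\sup_{\{x^n\}\in\cS(x_0)}\inf_{\{y^n\}\in\cS(y_0)}\limsup_n \one_{E_n}(x^n,y^n)=0$. Fix any sequence $x^n\to x_0$. By the second hypothesis on $E_\infty$ applied to $(x_0,y_0)$ and this $x^n$, there is a sequence $y^n\to y_0$ with $(x^n,y^n)\in E_n$ for all large $n$, so $\one_{E_n}(x^n,y^n)=0$ eventually and $\limsup_n \one_{E_n}(x^n,y^n)=0$; taking the infimum over $\{y^n\}$ gives $0$, and then the supremum over $\{x^n\}$ is $0$. (If instead $(x_0,y_0)\notin E_\infty$, the upper bound $\le +\infty=\one_{E_\infty}(x_0,y_0)$ is trivial.)

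For the lower bound, suppose $(x_0,y_0)\notin E_\infty$; I must show $\Gm_{\seq}(\N^-, X, Y^-)\lim_n \one_{E_n}(x_0,y_0)=\sup_{\{x^n\}\in\cS(x_0)}\inf_{\{y^n\}\in\cS(y_0)}\liminf_n \one_{E_n}(x^n,y^n)=+\infty$. It suffices to exhibit one sequence $x^n\to x_0$ for which $\inf_{\{y^n\}\in\cS(y_0)}\liminf_n \one_{E_n}(x^n,y^n)=+\infty$; take the constant sequence $x^n\equiv x_0$. For any $y^n\to y_0$, if $\liminf_n \one_{E_n}(x_0,y^n)$ were finite it would equal $0$, meaning $(x_0,y^n)\in E_n$ for infinitely many $n$; since $x^n=x_0\to x_0$ and $y^n\to y_0$, the first hypothesis on $E_\infty$ would force $(x_0,y_0)\in E_\infty$, a contradiction. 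Hence $\liminf_n \one_{E_n}(x_0,y^n)=+\infty$ for every admissible $y^n$, the infimum is $+\infty$, and so is the supremum. (If $(x_0,y_0)\in E_\infty$, the lower bound $\ge 0$ is trivial.) Combining the two cases with the ordering inequality establishes that both $\Gm_{\seq}$-limits equal $\one_{E_\infty}(x_0,y_0)$ at every point, which is the claim.

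The main subtlety — not a deep obstacle, but the place to be careful — is the asymmetric role of the two hypotheses and the matching asymmetry in the $\sup$-over-$x$, $\inf$-over-$y$ structure of the $X$ (unsigned, effectively both) versus $Y^-$ (infimum) slots: the first hypothesis is exactly what is needed to rule out a "good" $y^n$-sequence when $(x_0,y_0)\notin E_\infty$, while the second supplies a "good" $y^n$-sequence for \emph{every} prescribed $x^n$ when $(x_0,y_0)\in E_\infty$. One should double check that the unsigned $X$ slot genuinely requires both the $\sup$- and $\inf$-over-$\{x^n\}$ versions to agree; in the upper-bound argument the conclusion "$=0$" holds for the $\sup$ version (the larger one) and hence a fortiori for the $\inf$ version, and in the lower-bound argument "$=+\infty$" is obtained already from a single sequence so it holds for the $\inf$-over-$\{x^n\}$ version (the smaller one) and hence for the $\sup$ version — so both signs of $\epsilon_1$ give the same value, justifying the omission of the $X$ sign in the statement.
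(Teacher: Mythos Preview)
Your approach is the same as the paper's: split on whether $(x_0,y_0)\in E_\infty$, use the second hypothesis to produce a good $y^n$ for every $x^n$ in the first case, and use the contrapositive of the first hypothesis in the second case. The bounding via the $\liminf\le\limsup$ ordering is a clean way to organize it.

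There is one small slip in your final paragraph. In the lower-bound case $(x_0,y_0)\notin E_\infty$ you exhibit the \emph{single} sequence $x^n\equiv x_0$ and show $\inf_{\{y^n\}}\liminf_n \one_{E_n}(x_0,y^n)=+\infty$. That establishes the $\sup$-over-$\{x^n\}$ version equals $+\infty$, but your sentence ``obtained already from a single sequence so it holds for the $\inf$-over-$\{x^n\}$ version'' is backwards: one witnessing sequence bounds the supremum from below, not the infimum. To justify the unsigned $X$ slot you need $\inf_{\{x^n\}}\inf_{\{y^n\}}\liminf_n=+\infty$ as well. The fix is immediate and is exactly what the paper does: your contradiction argument via the first hypothesis uses nothing special about the constant sequence, so run it for an \emph{arbitrary} $x^n\to x_0$ and conclude that $\inf_{\{y^n\}}\liminf_n \one_{E_n}(x^n,y^n)=+\infty$ for every such $x^n$; then both the $\inf_{\{x^n\}}$ and $\sup_{\{x^n\}}$ versions are $+\infty$. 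With that one-word change (``arbitrary'' in place of ``constant'') the proof is complete and matches the paper's.
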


The following theorem provides us the criterion to check $\Gm$-convergence of the functional of the form $J+\one_E$ on product spaces.
\begin{theorem}\label{main}
	Suppose $X$ and $Y$ are two topological spaces. $(J_n)$ is a sequence of functionals defined on the product space $X \times Y$ and $(E_n)$ is a sequence of set in $X \times Y$.
Suppose that $J_n$ and $\one_{E_n}$ are sequential $\Gm$-convergent in the following sense
\begin{equation*}
\begin{split}
& J_\infty = \Gamma_{\seq}(\N, X^-, Y)\lim_n J_n,\\
& \one_{E_\infty} = \Gamma_{\seq}(\N, X, Y^-)\lim_n \one_{E_n},
\end{split}
\end{equation*}
then $J_n + \one_{E_n}$ is $\Gm$-convergent to $J_\infty + \one_{E_\infty}$ in the sense of  Definition \ref{def:Gmconv}.

Consequently, for every $n \in \N^+$, let $(x_n, y_n)$ be an optimal pair of the optimization problem
\[
\min_{X \times Y} (J_n + \one_{E_n}).
\]
If $x_n \rightarrow x_\infty$ in $X$ and $y_n \rightarrow y_\infty$ in $Y$, then $(x_\infty, y_\infty)$ is an optimal pair of the problem
\[
\min_{X \times Y} (J_\infty + \one_{E_\infty}).
\]
\end{theorem}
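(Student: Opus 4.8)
The plan is to combine the three preparatory results (Lemma~\ref{lemma2}, Lemma~\ref{lemma1} applied in product form, and Proposition~\ref{connection}) rather than work directly from Definition~\ref{def:Gmconv}. First I would set $f_n := J_n + \one_{E_n}$ and $f_\infty := J_\infty + \one_{E_\infty}$, and verify that the hypotheses of Lemma~\ref{lemma2} are met with $f_n$ playing the role of the $\Gm_{\seq}(\N,X^-,Y)$-convergent sequence and $\one_{E_n}$ the $\Gm_{\seq}(\N,X,Y^-)$-convergent one. Both sequences take values in $\bar{\R}^+$ (the $J_n$ are the UOT/OT energies, which are nonnegative, and indicator functions are $\{0,+\infty\}$-valued), so Lemma~\ref{lemma2} applies verbatim and yields
\begin{equation*}
\Gm_{\seq}(\N, X^-, Y^-)\lim_n f_n(x_0,y_0) = J_\infty(x_0,y_0) + \one_{E_\infty}(x_0,y_0) = f_\infty(x_0,y_0)
\end{equation*}
for every $(x_0,y_0)\in X\times Y$. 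Here I would be careful to note that the symmetric hypothesis ``$\Gm_{\seq}(\N,\cdot)$'' with the $\N$-sign suppressed means the $\limsup$ and $\liminf$ versions agree, so the conclusion of Lemma~\ref{lemma2} holds for both signs $\epsilon_0 = \pm 1$ simultaneously; that is exactly the statement that the two-sided $\Gm_{\seq}$-limit of $f_n$ on $X^-\times Y^-$ exists and equals $f_\infty$.

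Next I would invoke Proposition~\ref{connection}, but in its product-space form: the identification $\Gm_{\seq}(\N^\pm, Z^-)\lim = \Gamma\text{-}\limsup/\liminf$ proved there for a single space $Z$ applies with $Z = X\times Y$ equipped with the product topology, since a sequence converges in $X\times Y$ iff both coordinate sequences converge, so $\cS\big((x_0,y_0)\big)$ is exactly the set of pairs $(\{x^n\},\{y^n\})$ with $x^n\to x_0$, $y^n\to y_0$, and the two-variable operator $\mathcal{L}(\epsilon_1)\mathcal{L}(\epsilon_2)$ with $\epsilon_1=\epsilon_2=-1$ collapses to a single $\inf$ over $\cS((x_0,y_0))$. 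Hence the existence of $\Gm_{\seq}(\N, X^-, Y^-)\lim_n f_n = f_\infty$ gives, via Proposition~\ref{connection}, that $f_n$ $\Gm$-converges to $f_\infty$ in the sense of Definition~\ref{def:Gmconv}. This establishes the first assertion of the theorem.

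For the ``consequently'' part about optimal pairs, I would apply Lemma~\ref{lemma1} with the topological space taken to be $X\times Y$ and the sequence $f_n = J_n + \one_{E_n}$, which we have just shown satisfies $\Gm_{\seq}(\N, (X\times Y)^-)\lim_n f_n = f_\infty$. By hypothesis $(x_n,y_n)$ is a minimizer of $f_n$ over $X\times Y$, so $f_n(x_n,y_n) = \inf_{X\times Y} f_n$; in particular $\liminf_n f_n(x_n,y_n) = \liminf_n[\inf_{X\times Y} f_n]$, which is precisely the extra hypothesis Lemma~\ref{lemma1} needs. Since $(x_n,y_n)\to(x_\infty,y_\infty)$ by assumption, Lemma~\ref{lemma1} concludes $f_\infty(x_\infty,y_\infty) = \inf_{X\times Y} f_\infty$, i.e.\ $(x_\infty,y_\infty)$ is an optimal pair for $\min_{X\times Y}(J_\infty + \one_{E_\infty})$, as claimed.

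I do not expect a serious obstacle here, since the theorem is essentially a bookkeeping consequence of the lemmas assembled in this section; the one point that needs genuine care is the passage from the single-space statements of Proposition~\ref{connection} and Lemma~\ref{lemma1} to the product space $X\times Y$. The subtlety is that $\Gm_{\seq}$ on $X\times Y$ as a \emph{single} space (operator $\mathcal{L}(\epsilon_1)$ over $\cS((x_0,y_0))$) must be reconciled with the \emph{two-variable} $\Gm_{\seq}(\N,X^-,Y^-)$ notation (nested $\mathcal{L}(\epsilon_1)\mathcal{L}(\epsilon_2)$); when both signs are $-1$ these coincide because a double infimum over $\cS(x_0)\times\cS(y_0)$ is the infimum over $\cS((x_0,y_0))$ under the product topology, but this equivalence should be stated explicitly so the invocation of Proposition~\ref{connection} and Lemma~\ref{lemma1} is unambiguous. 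Everything else --- nonnegativity of the energies, the fact that minimizers realize the infimum --- is immediate.
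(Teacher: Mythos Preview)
Your proposal is correct and follows essentially the same route as the paper: apply Lemma~\ref{lemma2} to obtain $\Gm_{\seq}(\N,X^-,Y^-)\lim_n(J_n+\one_{E_n})=J_\infty+\one_{E_\infty}$, identify this with $\Gm_{\seq}(\N,(X\times Y)^-)$ via the double-infimum collapse, and then invoke Lemma~\ref{lemma1} on the product space for the optimal-pair conclusion. If anything, you are slightly more explicit than the paper about the product-topology identification and the role of Proposition~\ref{connection} in upgrading to Definition~\ref{def:Gmconv} convergence, and you correctly flag the nonnegativity hypothesis needed for Lemma~\ref{lemma2}.
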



\section{Convergence from UOT to OT}\label{sc:Convergence from UOT to OT}

In this section, we establish the convergence of the Beckmann formulation of the UOT problem \eqref{eq:BeckmannUOT}  to the corresponding OT problem \eqref{eq:BeckmannOT} in the sense of $\Gamma$-convergence.

\subsection{Problem descriptions}\label{Problem descriptions}

Fix a bounded domain $\Om \subset \R^d$ with smooth boundary which is not necessarily convex. Suppose that $\rho_0$ and $\rho_1$ are two probability measures defined on $\Om$.
To obtain the full description of the mathematical problems, one needs to specify the no-flux boundary condition $\boldm\cdot \boldsymbol{n} = 0$ on $\partial\Omega$ by the physical significance. Hence, the Beckmann formulation of the UOT problem is given by:
\begin{equation}\label{eq:uotmath0}
\begin{aligned}
   \min_{ \boldm, \eta} \quad & \int_\Omega \lvert \b{m}(\x) \rvert  + \alpha \lvert \eta(\x) \rvert \dif \x,\\
   \st \quad & \nabla \cdot \boldm + \rho_1 -\rho_0 = \eta ~ \text{in $\Om$}, \\
   & \boldm \cdot \boldsymbol{n} = 0 ~ \text{on $\partial \Om$}.
\end{aligned}
\end{equation}
Correspondingly, the Beckmann formulation of the traditional OT problem is analogously given by:
\begin{equation}\label{eq:otmath0}
\begin{aligned}
   \min_{ \boldm} \quad & \int_\Omega \lvert \boldm(\x) \lvert \dif \x,\\
   \st \quad & \nabla \cdot \boldm + \rho_1 -\rho_0 = 0 ~ \text{in $\Om$}, \\
   & \boldm \cdot \b{n} = 0 ~ \text{on $\partial \Om$}.
\end{aligned}
\end{equation}
The constraint in \eqref{eq:uotmath0} is understood in the weak sense, i.e.
\begin{equation}
    -\int_\Omega \boldm \cdot \nabla \varphi ~ \dif \x  +\int_\Omega (\rho_1 - \rho_0 - \eta)\varphi~ \dif \x = 0, \quad \text{for} ~ \forall \varphi \in C_b^1(\bar{\Omega}).
\end{equation}
The constraint for \eqref{eq:otmath0} is understood similarly.

Before we start the analysis, let us clarify its connection to the dynamic formulation as announced in the introduction. 
Recall the UOT problem for the case $p = q = 1$ in \eqref{WFR}:
\begin{equation}
\label{eq:uot}
\begin{aligned}
    & \min_{\rho, \boldsymbol{w}, \zeta}  \int_\Omega \int_0^1 \lvert \boldsymbol{w}(\x, t) \rvert + \alpha \lvert \zeta(\x, t) \rvert \dif t \dif \x, \\
    & \st \quad  \partial_t \rho(\x, t) + \nabla \cdot \boldsymbol{w}(\x, t) = \zeta(\x, t)~ \text{in $\Omega \times [0, 1]$}, \\
    & \operatorname{with}~  \boldsymbol{w}(\x, t)\cdot\boldsymbol{n} = 0, \text{on $\partial\Omega \times [0, 1]$}\\
    &\quad\quad \rho(\x, 0) = \rho_0(\x), \rho(\x,1) = \rho_1(\x).
\end{aligned}
\end{equation}
Here, $w(\cdot, t) \in (\mathcal{M}(\Om))^d, 0\le t\le 1$ is a $d$-dimensional vector field  and $\zeta(\cdot, t) \in \mathcal{M}(\Om), 0\le t\le 1$ is a source term on $\Om$. Note that $\mathcal{M}(\Om)$ is the set of Radon measures, the dual space of $C_b(\Om)$. Also, $t\mapsto \rho(\cdot, t)\in  \mathcal{M}(\Om)$ is a path on  $\mathcal{M}(\Om)$. 
Define 
\begin{equation}\label{m&eta}
    \boldm(\x) = \int_0^1 \boldsymbol{w}(\x, t)\dif t, \quad \eta(\x) = \int_0^1 \zeta(\x, t)\dif t,
\end{equation}
then we have the following lemma:

\begin{lemma}\label{prop:uot1}
    Under the settings above, the Beckmann formulation of UOT \eqref{eq:uotmath0} is equivalent to the dynamical UOT problem \eqref{eq:uot}. The same conclusion for the OT case.
\end{lemma}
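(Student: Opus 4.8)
The plan is to prove the equivalence by establishing a two-way inequality between the optimal values of \eqref{eq:uotmath0} and \eqref{eq:uot}, and showing that feasible points (and in particular minimizers) can be transferred between the two formulations via the averaging map \eqref{m&eta}. I will focus on the UOT case; the OT case follows by the same argument with $\zeta\equiv 0$ and $\eta\equiv 0$.

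\textbf{From dynamic to Beckmann (the easy direction).} Given any feasible triple $(\rho,\boldsymbol{w},\zeta)$ for \eqref{eq:uot}, define $\boldm$ and $\eta$ by \eqref{m&eta}. First I would check feasibility: integrating the continuity equation $\partial_t\rho + \nabla\cdot\boldsymbol{w} = \zeta$ in $t$ over $[0,1]$ and using the fundamental theorem of calculus together with the endpoint conditions $\rho(\cdot,0)=\rho_0$, $\rho(\cdot,1)=\rho_1$ gives $\rho_1 - \rho_0 + \nabla\cdot\boldm = \eta$ in the weak sense, and the no-flux condition $\boldsymbol{w}(\cdot,t)\cdot\boldsymbol{n}=0$ passes to $\boldm\cdot\boldsymbol{n}=0$ after integration (all of this interpreted against test functions $\varphi\in C_b^1(\bar\Omega)$, exchanging the order of the $\x$- and $t$-integrals by Fubini). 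For the objective, the convexity of the norm and Jensen's inequality (for measures, the triangle inequality for the total variation of a vector measure) yield $|\boldm(\x)| \le \int_0^1 |\boldsymbol{w}(\x,t)|\,\dif t$ and similarly for $\eta$; integrating in $\x$ shows the Beckmann cost of $(\boldm,\eta)$ is no larger than the dynamic cost of $(\rho,\boldsymbol{w},\zeta)$. Taking the infimum gives $W_1^\alpha(\rho_0,\rho_1) \le \inf\eqref{eq:uot}$.

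\textbf{From Beckmann to dynamic (the harder direction).} Given a feasible pair $(\boldm,\eta)$ for \eqref{eq:uotmath0}, I would construct a feasible dynamic triple with the same cost by a linear interpolation / "constant-speed" ansatz: set $\rho(\x,t) = (1-t)\rho_0(\x) + t\rho_1(\x) + \big((1-t) - \tfrac{1}{2}\big)\,\text{(correction)}$ — more simply, choose $\boldsymbol{w}(\x,t)\equiv\boldm(\x)$ and $\zeta(\x,t)\equiv\eta(\x)$ constant in $t$, and let $\rho(\x,t) := (1-t)\rho_0(\x) + t\rho_1(\x)$. Then $\partial_t\rho = \rho_1 - \rho_0 = \eta - \nabla\cdot\boldm = \zeta - \nabla\cdot\boldsymbol{w}$, so the continuity equation holds, the endpoint and boundary conditions hold by construction, and the dynamic cost equals $\int_\Omega\int_0^1 (|\boldm| + \alpha|\eta|)\,\dif t\,\dif\x = \int_\Omega (|\boldm| + \alpha|\eta|)\,\dif\x$, the Beckmann cost. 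This gives $\inf\eqref{eq:uot} \le W_1^\alpha(\rho_0,\rho_1)$, and combined with the previous step, equality of the optimal values. The construction also shows every Beckmann minimizer lifts to a dynamic minimizer, and the first direction shows every dynamic minimizer projects to a Beckmann minimizer, establishing the claimed equivalence.

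\textbf{Main obstacle.} The routine algebra is trivial; the genuine subtlety is functional-analytic bookkeeping in the measure-valued setting. One must make precise in what space the time integral $\int_0^1\boldsymbol{w}(\cdot,t)\,\dif t$ is defined (a Bochner-type integral of the curve $t\mapsto\boldsymbol{w}(\cdot,t)\in(\mathcal{M}(\Omega))^d$, or equivalently defined by duality against $C_b$ test functions), verify that the interpolated curve $t\mapsto(1-t)\rho_0 + t\rho_1$ is an admissible (absolutely continuous, in the relevant sense) path in $\mathcal{M}(\Omega)$ with the stated distributional time-derivative, and confirm that the Jensen/triangle-inequality step $\big|\int_0^1\boldsymbol{w}(\x,t)\,\dif t\big| \le \int_0^1|\boldsymbol{w}(\x,t)|\,\dif t$ is valid for the total variation of vector measures and is compatible with Fubini. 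I would handle this by reducing everything to the weak (dual) formulation already stated after \eqref{eq:otmath0}, so that all manipulations are just interchanges of integrals against fixed $\varphi\in C_b^1(\bar\Omega)$, which sidesteps measurability pitfalls; if a reference is wanted, the equivalence is morally the same as the passage from \eqref{dynamical W_p} to \eqref{eq:BeckmannOT} recorded in \cite{santambrogio2015optimal}.
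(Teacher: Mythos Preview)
Your proposal is correct and follows essentially the same route as the paper: the time-averaging map \eqref{m&eta} together with the triangle inequality gives the ``dynamic $\ge$ Beckmann'' direction, and the constant-in-time lift $\boldsymbol{w}\equiv\boldm$, $\zeta\equiv\eta$, $\rho(\cdot,t)=(1-t)\rho_0+t\rho_1$ gives the reverse inequality. If anything, you are a bit more careful than the paper, which does not explicitly verify that the time-averaged pair satisfies the Beckmann constraint and does not comment on the measure-theoretic interpretation of the integrals.
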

\begin{proof}
 On the one hand, for any feasible pair $(w, \zeta)$ in \eqref{eq:uot}, it holds that
    \begin{equation}
    \begin{aligned}
        \int_\Om \lvert \boldm(\x) \rvert + \alpha \lvert \eta(\x) \rvert \dif \x & = \int_\Om \bigg\lvert \int_0^1 w(\x, t) \dif t \bigg\rvert  + \alpha\bigg\lvert \int_0^1  \zeta(\x, t) \dif t \bigg\rvert \dif \x \\
        & \leq \int_\Om\int_0^1 \lvert w(\x, t) \rvert + \alpha \lvert \zeta(\x, t) \rvert\dif t \dif \x,
    \end{aligned}
    \end{equation}
    therefore one can obtain that
    \begin{equation}
        \min_{\boldm, \eta} \int_\Om \lvert \boldm(\x) \rvert + \alpha \lvert \eta(\x) \rvert \dif \x \leq \min_{w, \zeta} \int_\Om\int_0^1 \lvert w(\x, t) \rvert + \alpha \lvert \zeta(\x, t) \rvert\dif t \dif \x. 
    \end{equation}
    On the other hand, for any $\alpha > 0$ suppose $(\boldm_\alpha, \eta_\alpha)$ is an optimal pair to problem \eqref{eq:uotmath0}. Then let $w_\alpha(\x, t) \equiv \boldm_\alpha(\x)$ and $\zeta_\alpha(\x, t) \equiv\eta_\alpha(\x)$ for $\forall t \in [0, 1]$ and define
    \begin{equation*}
        \rho_\alpha(\x, t) = t\rho_1(\x) + (1 - t)\rho_0(\x),
    \end{equation*}
    one can get that the triad $(\rho_\alpha,w_\alpha,\zeta_\alpha)$ is a feasible solution to problem \eqref{eq:uot}. Therefore, 
    \begin{equation}
    \begin{aligned}
        \min_{\rho ,\boldsymbol{w}, \zeta}  \int_\Omega \int_0^1 \lvert \boldsymbol{w}(\x, t) \rvert + \alpha \lvert \zeta(\x, t) \rvert \dif t \dif \x & \leq \int_\Omega \int_0^1 \lvert \boldsymbol{w_\alpha}(\x, t) \rvert + \alpha \lvert \zeta_\alpha(\x, t) \rvert \dif t \dif \x\\
        & = \min_{ \boldm, \eta} \quad \int_\Omega \lvert \b{m}(\x) \rvert  + \alpha \lvert \eta(\x) \rvert \dif \x.
    \end{aligned}
    \end{equation}
    Hence, combining the two part one can conclude that the problem \eqref{eq:uot} and \eqref{eq:uotmath0} are equivalent.
\end{proof}

In the previous formulation, the form $\lvert \boldm(x) \rvert$ is the Euclidean norm ($\ell_2$-norm) of $ \boldm$ and from now on, we consider the general case of $\ell_p$-norm for $\boldm(x)$ where $1 \leq p < +\infty$. In other words, we use
\begin{gather*}
|\boldm(\x)|_p:=(\sum_{i=1}^d |m_i(\x)|^p)^{1/p}, ~~\text{for } p \in [1, +\infty),
\end{gather*}
to replace the original $\ell_2$-norm $\lvert \boldm(x) \rvert$. Moreover, for the convenience in later analysis, taking $\xi := \alpha \eta$ in problem \eqref{eq:uotmath0} and adding the term  $\xi$ to the objective function in problem \eqref{eq:otmath0} as a free variable, we obtain the two equivalent UOT and OT problems respectively:
\begin{equation}\label{eq:uotmath}
\begin{aligned}
   \min_{\boldm, \xi} \quad & \int_\Omega \lvert \boldm(\x) \lvert_p  + \lvert \xi(\x) \lvert \dif \x,\\
   \st \quad & \nabla \cdot \boldm + \rho_1 -\rho_0 = \frac{1}{\alpha} \xi ~ \text{in $\Om$}, \\
   & \boldm \cdot \boldsymbol{n} = 0 ~ \text{on $\partial \Om$}.
\end{aligned}
\end{equation}
and
\begin{equation}\label{eq:otmath}
\begin{aligned}
   \min_{\boldm, \xi} \quad & \int_\Omega \lvert \boldm(\x) \lvert_p + \lvert \xi(\x) \rvert \dif \x,\\
  \st \quad & \nabla \cdot \boldm + \rho_1 -\rho_0 = 0 ~ \text{in $\Om$}, \\
   & \boldm \cdot \boldsymbol{n} = 0 ~ \text{on $\partial \Om$}.
\end{aligned} 
\end{equation}

Then our goal comes to build a connection between the UOT problem \eqref{eq:uotmath} and the OT problem \eqref{eq:otmath}, by establishing the convergence from problem \eqref{eq:uotmath} to \eqref{eq:otmath} as $\alpha \rightarrow +\infty$ in the sense of $\Gamma$-convergence. 

\subsection{Existence of minimizers}
In this subsection, we first show the existence of minimizers of the above UOT problems. For the convenience of the discussion, we define the total variation norm of the fields $\boldm$ and $\xi$ as follows:
\begin{equation}\label{TV norm}
     \|\boldm\|:=\int_{\Omega}|\boldm(\x)|_p\dif\x, \qquad \| \xi \| := \int_\Om | \xi(\x) | \dif \x.
\end{equation}
 It is well known that the total variation norm is in fact the dual norm against the bounded continuous functions since $\Omega$ is bounded.

We first note that the continuity equation constraint  depends only on the gradient  in $\boldm$. By Helmholtz decomposition, we obtain
\begin{gather}
\boldm=-\nabla\Phi+\boldh,\quad \nabla\cdot \boldh=0,\quad
 \frac{\partial \Phi}{\partial \boldsymbol{n}}=0, \quad\boldh\cdot \boldsymbol{n} =0,
\end{gather}
where $\Phi$ is a scalar field and $\boldh$ is a field without divergence and the constraint is imposed on $\Phi$:
\begin{equation}\label{eq:measureconstraint}
    \int_\Omega \nabla\Phi \cdot \nabla \varphi ~ \dif \x  +\int_\Omega \left(\rho_1 - \rho_0 - \inv{\alpha} \xi\right)\varphi~ \dif \x = 0, \quad \text{for} ~ \forall \varphi \in C_b^1(\bar{\Omega}).
\end{equation}
In our setting, the divergence free condition $\nabla\cdot \boldh=0$ should be understood in the weak sense. Hence, we introduce the following space
\begin{equation*}
\mathcal{H}:=\left\{\boldh\in \M^d:  \int_{\Omega}\boldh\cdot \nabla\varphi\,\dif\x=0, \forall 
\varphi\in C_b^1(\bar{\Omega}) \right\}.
\end{equation*}

We note that the Helmholtz decomposition always exists if $\xi$ has a bounded total variation by the lemma below, from \cite[Theorem 22, Lemma 23]{brezis1973semi}.
\begin{lemma}\label{lmm:Poisson}
For each $\xi\in \M$ with $\int |\xi|\,dx<\infty$, there exists a weak solution $\Phi  \in W^{1,q}(\Om)$ where $q\in [1, d/(d-1))$ with $\int \Phi\,dx=0$ to the Poisson equation
\begin{equation}\label{eq:Poisson}
    -\Delta u = \xi.
\end{equation}
The solution satisfies
\begin{equation}\label{eq:control}
    \| \Phi  \|_{W^{1,q}} \leq C_q\| \xi \|
\end{equation}
for some constant $C_q > 0$ depending on $\alpha$  and $\Omega$ only.
\end{lemma}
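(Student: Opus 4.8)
The plan is to establish \eqref{eq:control} by the classical recipe for elliptic equations with measure data: regularize $\xi$, prove a $W^{1,q}$ bound on the smooth solutions that is \emph{uniform} in the regularization, and then pass to the limit by weak compactness. Since the weak formulation carries the natural (homogeneous Neumann) boundary condition, the equation $-\Delta u=\xi$ is only solvable under the compatibility condition $\xi(\Om)=0$; this is automatic in the application (test \eqref{eq:measureconstraint} with $\varphi\equiv 1$ and use that $\rho_0,\rho_1$ are probability measures), so I will either assume $\xi(\Om)=0$ or replace $\xi$ by $\xi-|\Om|^{-1}\xi(\Om)$. Let $\xi_\e\in C^\infty(\bar\Om)$ be a standard mollification with $\int_\Om\xi_\e\,\dif\x=0$, $\|\xi_\e\|_{L^1}\le\|\xi\|$ and $\xi_\e\rightharpoonup\xi$ weakly-$*$ in $\M$, and let $\Phi_\e\in C^\infty(\bar\Om)$ be the unique mean-zero classical solution of $-\Delta\Phi_\e=\xi_\e$ with $\partial_{\boldsymbol{n}}\Phi_\e=0$ on $\partial\Om$ (Lax--Milgram on $H^1(\Om)/\R$ together with standard elliptic regularity).

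The heart of the matter is the uniform bound $\|\Phi_\e\|_{W^{1,q}(\Om)}\le C_q\|\xi\|$, obtained by duality, where the conjugate exponent $q'=q/(q-1)$ satisfies $q'>d$ exactly because $q<d/(d-1)$. For the $L^q$ norm: given $\phi\in C^\infty(\bar\Om)$ with $\int_\Om\phi\,\dif\x=0$ and $\|\phi\|_{L^{q'}}\le1$, let $w$ be the mean-zero solution of $-\Delta w=\phi$, $\partial_{\boldsymbol{n}}w=0$; the Calder\'on--Zygmund estimate gives $\|w\|_{W^{2,q'}}\le C\|\phi\|_{L^{q'}}$, and since $q'>d$ the embedding $W^{2,q'}(\Om)\hookrightarrow C^1(\bar\Om)$ yields $\|w\|_{L^\infty}\le C$. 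Integrating by parts twice---every boundary term vanishes thanks to the two Neumann conditions---gives $\int_\Om\Phi_\e\phi\,\dif\x=\int_\Om\nabla\Phi_\e\cdot\nabla w\,\dif\x=\int_\Om\xi_\e w\,\dif\x$, so $|\int_\Om\Phi_\e\phi\,\dif\x|\le\|w\|_{L^\infty}\|\xi_\e\|_{L^1}\le C\|\xi\|$; taking the supremum over such $\phi$ and using $\int_\Om\Phi_\e\,\dif\x=0$ gives $\|\Phi_\e\|_{L^q}\le C\|\xi\|$. For the gradient: given $\boldsymbol{g}\in C^\infty(\bar\Om;\R^d)$ with $\|\boldsymbol{g}\|_{L^{q'}}\le1$, use the $L^{q'}$ Helmholtz decomposition $\boldsymbol{g}=\nabla\psi+\boldsymbol{g}_0$ with $\nabla\cdot\boldsymbol{g}_0=0$, $\boldsymbol{g}_0\cdot\boldsymbol{n}=0$ and $\|\nabla\psi\|_{L^{q'}}+\|\boldsymbol{g}_0\|_{L^{q'}}\le C$ (valid since $\partial\Om$ is smooth); one integration by parts gives $\int_\Om\nabla\Phi_\e\cdot\boldsymbol{g}_0\,\dif\x=0$, while testing $-\Delta\Phi_\e=\xi_\e$ with $\psi$ gives $\int_\Om\nabla\Phi_\e\cdot\nabla\psi\,\dif\x=\int_\Om\xi_\e\psi\,\dif\x$, where $W^{1,q'}(\Om)\hookrightarrow C(\bar\Om)$ and Poincar\'e yield $\|\psi\|_{L^\infty}\le C$. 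Summing, $|\int_\Om\nabla\Phi_\e\cdot\boldsymbol{g}\,\dif\x|\le C\|\xi\|$, hence $\|\nabla\Phi_\e\|_{L^q}\le C\|\xi\|$.

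For $q\in(1,d/(d-1))$ the space $W^{1,q}(\Om)$ is reflexive, so along a subsequence $\Phi_{\e_k}\rightharpoonup\Phi$ weakly in $W^{1,q}$; passing to the limit in $\int_\Om\nabla\Phi_{\e_k}\cdot\nabla\varphi\,\dif\x=\int_\Om\xi_{\e_k}\varphi\,\dif\x$ for $\varphi\in C_b^1(\bar\Om)$---using weak convergence of the gradients against $\nabla\varphi\in L^{q'}$ and weak-$*$ convergence of $\xi_{\e_k}$ against the continuous $\varphi$---shows that $\Phi$ is a weak solution with $\int_\Om\Phi\,\dif\x=0$, and weak lower semicontinuity of the norm gives $\|\Phi\|_{W^{1,q}}\le C_q\|\xi\|$. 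The endpoint $q=1$ then follows from the case of some $q_0\in(1,d/(d-1))$ together with $W^{1,q_0}(\Om)\hookrightarrow W^{1,1}(\Om)$ on the bounded domain.

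The main obstacle is not any individual step but the fact that every regularity input must be applied at the borderline exponent: the auxiliary problems have to be solved in $W^{2,q'}$ and $W^{1,q'}$ and their solutions must embed into $C(\bar\Om)$, which is precisely the requirement $q'>d\Leftrightarrow q<d/(d-1)$; in addition one needs the $L^{q'}$ boundedness of the Helmholtz projection on the smooth domain $\Om$ and a careful accounting of the Neumann boundary terms (hence the compatibility condition). All of this is exactly the Stampacchia/Brezis--Strauss theory of $L^1$- and measure-data elliptic problems, so in the write-up one may alternatively just quote \cite[Theorem 22, Lemma 23]{brezis1973semi}.
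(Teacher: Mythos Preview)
Your proof is correct and follows the standard Stampacchia/Brezis--Strauss duality argument for elliptic problems with measure data. The paper, however, does not prove this lemma at all: it simply quotes it from \cite[Theorem~22, Lemma~23]{brezis1973semi}, adding only the one-line remark that the cited result is stated for $L^1$ data but carries over to Radon measures with no essential change. In that sense you have gone further than the paper by actually sketching the mechanism behind the cited reference, and you yourself close by noting that one may alternatively just invoke \cite{brezis1973semi}---which is precisely, and only, what the paper does.
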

Note that the result in \cite{brezis1973semi} is for $\xi \in L^1(\Omega)$ while we are considering Radon measures here, although there is no essential difference. By the inequality \eqref{eq:control}, it yields that the TV norm of $\nabla\Phi$ defined in \eqref{TV norm} can be controlled as $\|\nabla \Phi\|\le C\|\xi\|$.  Then, for each $(\xi, \boldm)$ satisfying the constraint \eqref{eq:measureconstraint}, by Lemma \ref{lmm:Poisson} one can find a weak solution $\Phi$ with $\int \Phi\,\dif\x=0$ satisfying
\begin{equation*}
    \Delta \Phi = \rho_1 - \rho_0 - \frac{1}{\alpha}\xi.
\end{equation*}
Define
\begin{equation*}
    \boldh:=\boldm+\nabla\Phi\in \mathcal{H}.
\end{equation*}
and one can get that the Helmholtz decomposition of $\boldm$ exists and is stable.

Hence, the problem (\ref{eq:uotmath}) is reduced to
\begin{gather}\label{eq:uotreform}
\begin{aligned}
\min_{(\xi, \boldh, \Phi)} & \int_{\Omega}|\boldh-\nabla\Phi|_p+|\xi(\x)|\,\dif\x,\\
\st & \int_\Omega \nabla\Phi \cdot \nabla \varphi ~ \dif \x  +\int_\Omega \left(\rho_1 - \rho_0 - \inv{\alpha} \xi\right)\varphi~ \dif \x = 0, \quad \text{for} ~ \forall \varphi \in C_b^1(\bar{\Omega}), \\ 
 & \boldh\in \mathcal{H}.
\end{aligned}
\end{gather}
Similarly, the OT problem \eqref{eq:otmath} becomes
\begin{gather}\label{eq:otreform}
\begin{aligned}
\min_{(\xi, \boldh, \Phi)} & \int_{\Omega}|\boldh-\nabla\Phi|_p+|\xi(\x)|\,\dif\x,\\
\st & \int_\Omega \nabla\Phi \cdot \nabla \varphi ~ \dif \x  +\int_\Omega (\rho_1 - \rho_0)\varphi~ \dif \x = 0, \quad \text{for} ~ \forall \varphi \in C_b^1(\bar{\Omega}), \\ 
 & \boldh\in \mathcal{H}.
\end{aligned}
\end{gather}
Using these two reduced problems, we can establish the following existence results.

\begin{proposition}\label{pro:exist}
Both \eqref{eq:uotmath} and \eqref{eq:otmath} have global minimizers
over $\M\times (\M)^d$.
\end{proposition}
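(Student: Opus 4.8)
The plan is to prove existence via the direct method of the calculus of variations, working with the reduced formulations \eqref{eq:uotreform} and \eqref{eq:otreform} in the variables $(\xi,\boldh,\Phi)$. Since the two cases are essentially identical (the OT case just sets $1/\alpha=0$ in the constraint), I would focus on \eqref{eq:uotreform}. The objective $\int_\Om |\boldh-\nabla\Phi|_p + |\xi|\,\dif\x$ is nonnegative, so the infimum $m:=\inf$ is finite; moreover the feasible set is nonempty — e.g. take $\boldh=0$ and solve the Poisson problem of Lemma \ref{lmm:Poisson} with $\xi=0$ to get a feasible $\Phi$, or more simply take $\xi$ large enough as in the discrete discussion. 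Fix a minimizing sequence $(\xi^k,\boldh^k,\Phi^k)$ with objective values converging to $m$; then $\sup_k \|\xi^k\|<\infty$ and $\sup_k \|\boldh^k-\nabla\Phi^k\|<\infty$.

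Next I would extract compactness. From $\sup_k\|\xi^k\|<\infty$ and the weak-* compactness of bounded sets in $\M=(C_b(\bar\Om))^*$ (Banach–Alaoglu, using that $\Om$ is bounded), pass to a subsequence with $\xi^k \rightharpoonup^* \xi^\infty$ in $\M$. By Lemma \ref{lmm:Poisson}, the unique $\Phi^k\in W^{1,q}(\Om)$ with $\int\Phi^k=0$ solving $-\Delta\Phi^k=\rho_1-\rho_0-\tfrac1\alpha\xi^k$ satisfies $\|\Phi^k\|_{W^{1,q}}\le C_q\|\xi^k\|$, uniformly bounded for $q\in[1,d/(d-1))$; by reflexivity of $W^{1,q}$ (for $q>1$) and the compact embedding $W^{1,q}(\Om)\hookrightarrow L^1(\Om)$ we get $\Phi^k\rightharpoonup\Phi^\infty$ in $W^{1,q}$ and $\Phi^k\to\Phi^\infty$ in $L^1$, hence $\nabla\Phi^k\rightharpoonup^*\nabla\Phi^\infty$ as measures. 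Linearity of the weak constraint then shows $(\xi^\infty,\Phi^\infty)$ satisfies \eqref{eq:measureconstraint}. Finally, since $\|\boldh^k\|\le \|\boldh^k-\nabla\Phi^k\| + \|\nabla\Phi^k\|$ is uniformly bounded, extract a further subsequence $\boldh^k\rightharpoonup^*\boldh^\infty$ in $\M^d$; the divergence-free constraint $\int\boldh^k\cdot\nabla\varphi=0$ passes to the limit by testing against fixed $\varphi\in C_b^1(\bar\Om)$, so $\boldh^\infty\in\mathcal H$.

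The last step is lower semicontinuity of the objective. Both $\xi\mapsto\|\xi\|$ and $\boldm\mapsto\|\boldm\|=\int|\boldm|_p\,\dif\x$ are the total-variation/dual norms against $C_b$-functions, hence are weak-* lower semicontinuous; since $\boldh^k-\nabla\Phi^k \rightharpoonup^* \boldh^\infty-\nabla\Phi^\infty$ in $\M^d$, we get
\[
\int_\Om|\boldh^\infty-\nabla\Phi^\infty|_p + |\xi^\infty| \,\dif\x \le \liminf_k \int_\Om|\boldh^k-\nabla\Phi^k|_p + |\xi^k|\,\dif\x = m,
\]
so $(\xi^\infty,\boldh^\infty,\Phi^\infty)$ is a minimizer of \eqref{eq:uotreform}; setting $\boldm^\infty=\boldh^\infty-\nabla\Phi^\infty$ gives a minimizer of \eqref{eq:uotmath} over $\M\times\M^d$, and the identical argument with $\tfrac1\alpha\xi$ replaced by $0$ handles \eqref{eq:otmath}.

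I expect the main obstacle to be the compactness/closedness of the feasible set — specifically ensuring that the limit $(\xi^\infty,\boldh^\infty,\Phi^\infty)$ still satisfies the PDE constraint in the measure-valued setting. This hinges on two things: that Lemma \ref{lmm:Poisson} provides the uniform $W^{1,q}$ bound on $\Phi^k$ giving strong $L^1$ compactness of $\Phi^k$ (so $\nabla\Phi^k$ converges weak-* and the constraint is stable), and that all constraints are \emph{linear} in the unknowns so they are preserved under weak-* limits by simply testing against a fixed $\varphi\in C_b^1(\bar\Om)$. The edge case $q=1$ (when $d=1$) should be noted separately since $W^{1,1}$ is not reflexive, but in that case $\Phi^k$ is bounded in $W^{1,1}(\Om)$ with $\Om\subset\R$ an interval, so $\{\Phi^k\}$ is bounded in $BV$ and $\nabla\Phi^k=(\Phi^k)'$ is a bounded sequence of measures, which is all that is needed.
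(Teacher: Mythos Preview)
Your proposal is correct and follows essentially the same direct-method approach as the paper: work with the reformulation \eqref{eq:uotreform}, take a minimizing sequence, use Lemma~\ref{lmm:Poisson} to bound $\nabla\Phi^k$ uniformly, extract weak-$*$ limits via Banach--Alaoglu, and conclude by weak-$*$ lower semicontinuity of the total variation norm. The one noteworthy difference is how the convergence $\nabla\Phi^k \rightharpoonup^* \nabla\Phi^\infty$ is obtained: the paper eliminates $\Phi$ as an independent variable by writing $\Phi=\Phi(\cdot;\xi)$ and verifies $\int \nabla\Phi(\cdot;\xi_n)\cdot\boldsymbol g \to \int \nabla\Phi(\cdot;\xi)\cdot\boldsymbol g$ directly via a Helmholtz decomposition of the test field $\boldsymbol g$, whereas you keep $(\xi,\boldh,\Phi)$ as three variables and appeal to reflexivity of $W^{1,q}$ for $q>1$. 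Both routes work; yours is more standard functional analysis and handles the $d=1$ edge case explicitly, while the paper's trick avoids any case split. (One minor slip: the right-hand side of your Poisson equation should be $\rho_0-\rho_1+\tfrac{1}{\alpha}\xi^k$, so the $W^{1,q}$ bound is $C_q(\|\rho_0-\rho_1\|+\tfrac{1}{\alpha}\|\xi^k\|)$ rather than $C_q\|\xi^k\|$; this is still uniformly bounded and does not affect the argument.)
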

\begin{proof}
By the reformulation above, we prove the existence results for \eqref{eq:uotreform} and \eqref{eq:otreform}. We will take \eqref{eq:uotreform} as an example.

First of all, we equip the set $\M\times\mathcal{H}$ for $(\xi, \boldh)$ with the weak topology: $(\xi_n, \boldh_n) \Rightarrow (\xi, \boldh)$ if
\begin{equation}\label{weak convergence}
    \int f d\xi_n+\int \boldsymbol{g}\cdot d\boldh_n \to \int f d\xi+\int \boldsymbol{g}\cdot d\boldh, \forall f\in C_b^1(\bar{\Omega}; \R), \boldsymbol{g}\in C_b^1(\bar{\Omega}; \R^d).
\end{equation}
Clearly, the space $\mathcal{H}$ is closed in $\M^d$ under the weak topology. Consider the functional:
\begin{equation}
    (\xi, \boldh)\mapsto F(\xi, \boldh):=\int_{\Omega}|\boldh(\x)-\nabla\Phi(\x;\xi)|_p+|\xi(\x)|\,\dif\x = \|\boldh-\nabla\Phi(\cdot;\xi)\| + \|\xi \|,
\end{equation}
where $\Phi(\cdot; \xi)$ indicates that $\Phi$ is solved according to the Poisson equation with given $\xi$ .

It is  straightforward to verify that $F$ is lower semi-continuous under the topology for $\M\times \mathcal{H}$. In fact, if 
$(\xi_n, \boldh_n)\Rightarrow (\xi, \boldh)$, one has that
\[
\boldh_n-\nabla \Phi(\cdot; \xi_n)\Rightarrow \boldh-\nabla\Phi(\cdot; \xi).
\]
To see this, for any test vector field $\boldsymbol{g} \in C_b^1(\bar{\Omega}; \R^d)$, one can also decompose $\boldsymbol{g}$ as
\begin{equation}
    \boldsymbol{g} = \nabla\phi+\boldsymbol{v}, \quad \nabla\cdot \boldsymbol{v}=0, \quad \boldsymbol{v}\cdot \boldsymbol{n} = 0 \text{ on }\partial \Om.
\end{equation}
Then, 
\begin{multline}
\int_{\Omega}\nabla \Phi(\cdot; \xi_n)\cdot \boldsymbol{g}\,\dif\x
=\int_{\Omega}\nabla \Phi(\cdot; \xi_n)\cdot \nabla\phi \,\dif\x
=\int_{\Omega}\left(\frac{1}{\alpha}\xi_n+\rho_0-\rho_1\right)\phi\,\dif\x\\
\longrightarrow \int_{\Omega}\left(\frac{1}{\alpha}\xi+\rho_0-\rho_1\right)\phi\,\dif\x
=\int_{\Omega}\nabla \Phi(\cdot; \xi)\cdot \boldsymbol{g}\,\dif\x.
\end{multline}
Since bounded smooth functions are dense in the space of bounded continuous functions under the topology of uniform convergence (recall that $\bar{\Omega}$ is a bounded set), the above therefore holds for all $\boldsymbol{g} \in C_b^1(\bar{\Omega}; \R^d)$.

Consequently,
\begin{equation}
    \|\boldh-\nabla\Phi(\cdot; \xi)\|+\|\xi\|
\le \liminf_{n\to\infty }\|\boldh_n-\nabla\Phi(\cdot; \xi_n)\|+\|\xi_n\|.
\end{equation}
Hence, the lower semicontinuity is established.

It is clear that
\begin{equation*}
   F_*:=\inf_{(\xi, \boldh)\in \M\times \mathcal{H}}F(\xi, \boldh)>-\infty.
\end{equation*}
Then, consider a minimizing sequence, $(\xi_n, \boldh_n)$
such that $F(\xi_n, \boldh_n)\to F_*$. Then for this minimizing sequence, one has
\begin{equation*}
    \sup_n \|\boldh_n-\nabla\Phi(\cdot; \xi_n)\|+\|\xi_n\|<+\infty. 
\end{equation*}
According to \eqref{eq:control}, $\|\nabla\Phi(\cdot; \xi_n)\|$
is also uniformly bounded. Consequently, 
\begin{equation}
    \sup_n \|\boldh_n\|+\|\xi_n\|\leq \sup_n \|\boldh_n-\nabla\Phi(\cdot; \xi_n)\| + \|\nabla\Phi(\cdot; \xi_n) \|+\|\xi_n\| < +\infty.
\end{equation}
The Banach-Alaoglu theorem indicates that there must be a weakly convergent subsequence. Hence, together with the lower semi-continuity, the minimizer exists. 
\end{proof}

\subsection{Convergence}
By noticing the conditions in Theorem \ref{main}, we will regard $\xi$
and $\Phi$ as independent variables. Define the functional $J$ for all
 $(\xi, \boldh, \Phi) \in (\M\times \mathcal{H})\times W^{1,1}(\Omega)$ by
\begin{equation}
    J(\xi, \boldh, \Phi) = \int_\Omega \lvert \boldh (\x)-\nabla\Phi(\x) \rvert_p + \lvert \xi(\x) \rvert \dif \x.
\end{equation}
We equip  the space for $(\xi, \boldh)$ with the weak convergence of the measures defined in \eqref{weak convergence}
\begin{gather*}
X:=\left(\M \times \mathcal{H}, ~\Rightarrow\right).
\end{gather*}
As mentioned above already, the space $\mathcal{H}$ is closed in $\M^d$ under the weak topology.

Note that this weak topology for measures is closer to the weak* convergence in functional analysis.  Moreover, the topology we choose for the space of $\nabla \Phi$ is the total variation norm, or the $W^{1,1}$ norm of $\Phi$ (assuming $\Phi$ has mean zero)
\begin{gather*}
Y:=W^{1,1}(\Omega).
\end{gather*}

Now, we introduce the set of constraints
\begin{multline}
E_\alpha :=\Bigg\{((\xi, \boldh), \Phi) \in X\times Y:
\int_{\Omega}\Phi\,\dif\x=0, \\
\int_\Omega \nabla\Phi \cdot \nabla \varphi ~ \dif \x  +\int_\Omega (\rho_1 - \rho_0 - \inv{\alpha} \xi)\varphi~ \dif \x = 0, 
\quad  \forall \varphi \in C_b^1(\bar{\Omega})
\Bigg\}.
\end{multline}
Similarly, 
\begin{multline}
E_\infty :=\Bigg\{((\xi, \boldh), \Phi) \in X\times Y:
\int_{\Omega}\Phi\,\dif\x=0, \\
\int_\Omega \nabla\Phi \cdot \nabla \varphi ~ \dif \x  +\int_\Omega (\rho_1 - \rho_0)\varphi~ \dif \x = 0, 
\quad  \forall \varphi \in C_b^1(\bar{\Omega})
\Bigg\}.
\end{multline}
Problem \eqref{eq:uotmath} can be reformulated as 
\begin{equation}
\min_{(\xi, \boldh)\in X,\ \Phi\in Y}~ J(\xi, \boldh, \Phi) + \one_{E_\alpha}.
\end{equation}
Similarly,  \eqref{eq:otmath} is
\begin{equation}
\min_{(\xi, \boldh)\in X, \ \Phi\in Y}~ J(\xi, \boldh, \Phi) + \one_{E_\infty}.
\end{equation}
The following theorem states the convergence from (\ref{eq:uotmath}) to (\ref{eq:otmath}) as $\alpha$ goes to infinity.
 \begin{theorem}\label{thm:convuot}
    Suppose for any $\alpha > 0$, $(\boldm^\alpha, \xi^\alpha)$ is an optimal solution of the corresponding UOT problem \eqref{eq:uotmath}. 
Then
\begin{enumerate}[(i)]
\item  With a decomposition
\[
\boldm^{\alpha} = \boldh^{\alpha} - \nabla\Phi^{\alpha}
\] 
such that $((\xi^\alpha, \boldh^\alpha), \Phi^\alpha)\in \one_{E_\alpha}$. There exists some constant $\alpha_0>0$, $M>0$ such that for all $\alpha\ge \alpha_0$
\[
\sup_\alpha\|\boldh^\alpha\|+\|\nabla\Phi^\alpha\|+\|\xi^\alpha\|\le M.
\]

\item 
For any increasing sequence $\{\alpha_I\}$ going to infinity, where $I$ is an index set, there exists a convergent subsequence $((\xi^{\alpha_k}, \boldh^{\alpha_k}), \Phi^{\alpha_k})\in X\times Y$ with $\alpha_k \uparrow +\infty$ such that the limit $((\xi^{\infty}, \boldh^{\infty}), \Phi^{\infty})\in \one_{E_{\infty}}$ and $(\xi^{\infty}, \boldm^{\infty})=(\xi^{\infty}, \boldh^{\infty}-\nabla\Phi^{\infty})$ is a solution of the OT problem \eqref{eq:otmath}. Moreover, $\xi_\infty = 0$. 
\end{enumerate}
\end{theorem}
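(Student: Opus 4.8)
The plan is to apply Theorem~\ref{main} with $J_n := J$ (the same functional, independent of $\alpha$) and $E_n := E_{\alpha_k}$ along any sequence $\alpha_k \uparrow +\infty$. Concretely, I would verify the two $\Gm_{\seq}$-convergence hypotheses of Theorem~\ref{main} and then establish the compactness statement in (i), from which (ii) follows by extracting a convergent subsequence and invoking the conclusion of Theorem~\ref{main}. For the $\Gm_{\seq}$-convergence of $J$ in the sense $\Gm_{\seq}(\N, X^-, Y)\lim_n J = J$: since $J$ does not depend on $n$, the $\liminf$-inequality is the lower semicontinuity of $(\xi,\boldh,\Phi)\mapsto J(\xi,\boldh,\Phi)$ with respect to weak convergence in $X$ and $W^{1,1}$-convergence in $Y$ (for the $\Phi$-slot one may even take the constant recovery sequence, so the $\Gm_{\seq}(\N,X^-,Y)$ form with $Y$ unmarked is exactly what is needed), and this lower semicontinuity was already shown in the proof of Proposition~\ref{pro:exist}: if $(\xi_n,\boldh_n)\Rightarrow(\xi,\boldh)$ then $\boldh_n - \nabla\Phi_n \Rightarrow \boldh - \nabla\Phi$ when $\Phi_n \to \Phi$ in $W^{1,1}$, and the TV-type norms are weak* lower semicontinuous; the recovery sequence is the trivial constant one. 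For $\one_{E_{\alpha_k}}$, I would apply Lemma~\ref{lemma3} with $E_\infty$ as defined in the excerpt: the two set-convergence conditions must be checked. The first (closedness under limits) uses that for $(x^n,\Phi^n)=((\xi^n,\boldh^n),\Phi^n)\in E_{\alpha_n}$ with $\alpha_n\to\infty$, the term $\frac{1}{\alpha_n}\int \xi^n\varphi\,\dif\x \to 0$ provided $\|\xi^n\|$ stays bounded along the relevant sequences (which one arranges by restricting attention, as in Lemma~\ref{lemma3}, to sequences that actually converge in $X$), so passing to the limit in the weak constraint gives membership in $E_\infty$. The second (recovery of constraint points) is the more delicate of the two: given $((\xi,\boldh),\Phi)\in E_\infty$ and a sequence $(\xi^n,\boldh^n)\Rightarrow(\xi,\boldh)$ in $X$, I must produce $\Phi^n\to\Phi$ in $W^{1,1}$ with $((\xi^n,\boldh^n),\Phi^n)\in E_{\alpha_n}$; the natural choice is to let $\Phi^n$ solve the Poisson problem $-\Delta\Phi^n = \rho_1-\rho_0-\frac{1}{\alpha_n}\xi^n$ with mean zero (which exists and is stable by Lemma~\ref{lmm:Poisson}), and then $\nabla\Phi^n - \nabla\Phi$ solves $-\Delta(\cdot) = -\frac{1}{\alpha_n}\xi^n \to 0$ in the appropriate sense, so $\|\nabla\Phi^n - \nabla\Phi\|_{W^{1,1}}\le C\,\frac{1}{\alpha_n}\|\xi^n\| \to 0$ using the $W^{1,q}$ bound together with compactness/boundedness of $\|\xi^n\|$.

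Having established the two hypotheses, Theorem~\ref{main} gives that $J + \one_{E_{\alpha_k}}$ $\Gm$-converges to $J + \one_{E_\infty}$, and that any convergent sequence of optimal pairs converges to an optimal pair of $\min (J+\one_{E_\infty})$, which by the reformulation is exactly the OT problem \eqref{eq:otmath}. So the remaining work is the compactness claim in (i): I need a uniform-in-$\alpha$ (for $\alpha$ large) bound on $\|\boldh^\alpha\| + \|\nabla\Phi^\alpha\| + \|\xi^\alpha\|$. The idea is a competitor argument: fix any feasible $\boldm^{OT}$ for the OT problem \eqref{eq:otmath} (which exists by Proposition~\ref{pro:exist}); then $(\boldm^{OT}, \xi = 0)$ is feasible for the UOT problem \eqref{eq:uotmath} for \emph{every} $\alpha$, since the constraint $\nabla\cdot\boldm + \rho_1-\rho_0 = \frac{1}{\alpha}\xi$ becomes $\nabla\cdot\boldm+\rho_1-\rho_0=0$ when $\xi=0$. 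Hence the optimal value $W_1^\alpha \le \|\boldm^{OT}\| =: M_0$ uniformly in $\alpha$, which immediately bounds $\|\boldm^\alpha\|_p + \|\xi^\alpha\| \le M_0$; then, using the decomposition $\boldm^\alpha = \boldh^\alpha - \nabla\Phi^\alpha$ with $\Phi^\alpha$ solving the Poisson equation for $\rho_1-\rho_0-\frac1\alpha\xi^\alpha$, Lemma~\ref{lmm:Poisson} and $\|\xi^\alpha\|\le M_0$ give $\|\nabla\Phi^\alpha\|\le C(\|\rho_1-\rho_0\| + \frac1\alpha M_0) \le C'$ uniformly for $\alpha \ge 1$, and then $\|\boldh^\alpha\| \le \|\boldm^\alpha\| + \|\nabla\Phi^\alpha\| \le M_0 + C'$. (One should note that $|\cdot|_p$ and $|\cdot|_2$ are equivalent norms on $\R^d$ with dimension-dependent constants, so switching between them in these estimates costs only a fixed constant.) This gives $M$ and one may take $\alpha_0 = 1$.

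For (ii): from the uniform bound in (i), along any $\alpha_I \uparrow\infty$ the set $\{((\xi^{\alpha_I},\boldh^{\alpha_I}),\Phi^{\alpha_I})\}$ is bounded in $X\times Y$; Banach--Alaoglu gives a weak* convergent subsequence in $X$, and the $W^{1,q}$ bound on $\Phi^{\alpha_k}$ together with Rellich compactness gives (along a further subsequence) strong $W^{1,1}$ convergence of $\Phi^{\alpha_k}$ — here one uses $q\in[1,d/(d-1))$ so that $W^{1,q}\hookrightarrow\hookrightarrow W^{1,1}$ (or rather $L^1$, plus equi-integrability of gradients coming from the bound), so the limit $\Phi^\infty$ has $\nabla\Phi^\infty$ in the appropriate space. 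By the set-convergence closedness condition (first bullet of Lemma~\ref{lemma3} applied to $E_{\alpha_k}\to E_\infty$), the limit lies in $E_\infty$; in particular $\nabla\cdot\boldm^\infty + \rho_1-\rho_0 = 0$. By Theorem~\ref{main}'s consequence, $(\xi^\infty,\boldm^\infty) = (\xi^\infty,\boldh^\infty-\nabla\Phi^\infty)$ is an optimal pair for $\min(J+\one_{E_\infty})$, i.e. for \eqref{eq:otmath}. Finally $\xi^\infty = 0$: the OT problem \eqref{eq:otmath} treats $\xi$ as a free variable with a nonnegative penalty $\|\xi\|$ and $\xi$ does not enter the constraint $E_\infty$ at all, so any optimal $\xi$ must minimize $\|\xi\|$, forcing $\xi^\infty = 0$.

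The main obstacle I expect is the recovery-sequence condition for $\one_{E_{\alpha_k}}$ — i.e. the second bullet of Lemma~\ref{lemma3} — because it requires not just passing to a limit but constructing, for an \emph{arbitrary} converging sequence $(\xi^n,\boldh^n)\Rightarrow(\xi,\boldh)$ in the weak topology, a matching $\Phi^n$ that both satisfies the exact constraint $E_{\alpha_n}$ and converges \emph{strongly} in $W^{1,1}$; the stability estimate \eqref{eq:control} of Lemma~\ref{lmm:Poisson} is exactly the tool for this, but one must be careful that the constant there is uniform in $\alpha$ for $\alpha$ bounded away from $0$ (the statement of Lemma~\ref{lmm:Poisson} says it depends on $\alpha$, so I would track the $\alpha$-dependence or re-derive it for the $\alpha$-free equation $-\Delta\Phi = \rho_1-\rho_0-\frac1\alpha\xi$ and absorb the $\frac1\alpha$ into the data bound) and that boundedness of $\|\xi^n\|$ is available along the sequence — which it is, since weak* convergent sequences in $\M$ are norm-bounded by uniform boundedness. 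A secondary subtlety is making sure the $\Gm_{\seq}$ forms with the correct decorations ($X^-$ marked, $Y$ unmarked for $J$; $X$ unmarked, $Y^-$ marked for $\one_{E_n}$, as Theorem~\ref{main} demands) are the ones actually verified, which dictates whether the recovery sequence is taken in the $\Phi$-variable or the $(\xi,\boldh)$-variable.
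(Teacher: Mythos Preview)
Your proposal follows essentially the same route as the paper: verify the two $\Gm_{\seq}$-hypotheses of Theorem~\ref{main} (this is precisely the content of the paper's Lemma~\ref{lmm:Gamconv}), establish the uniform bound in (i) via the OT competitor $(\boldm^{OT},0)$, and then extract a convergent subsequence for (ii). Your identification of the recovery-sequence condition for $\one_{E_{\alpha_k}}$ as the delicate step, and your proposed construction via the Poisson stability estimate of Lemma~\ref{lmm:Poisson}, match the paper's argument exactly. (Your worry about the constant $C_q$ depending on $\alpha$ is a red herring: the Poisson equation in Lemma~\ref{lmm:Poisson} has no $\alpha$ in it, so the dependence stated there is a typo; the constant depends only on $q$ and $\Omega$.)

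There is, however, one genuine gap in your treatment of (ii): the compactness argument for $\Phi^{\alpha_k}$ in $Y = W^{1,1}$. Rellich--Kondrachov gives $W^{1,q} \hookrightarrow\hookrightarrow L^r$ compactly for suitable $r$, but \emph{not} $W^{1,q} \hookrightarrow\hookrightarrow W^{1,1}$ --- there is no compact embedding that controls gradients strongly. Your parenthetical fallback (``equi-integrability of gradients'') does not rescue this: equi-integrability plus boundedness yields only weak $L^1$ relative compactness of $\{\nabla\Phi^{\alpha_k}\}$ via Dunford--Pettis, not strong $L^1$ convergence, and Theorem~\ref{main} requires strong convergence in $Y$ (this is exactly why the paper equips $Y$ with the $W^{1,1}$ norm; see the Remark inside the proof of Lemma~\ref{lmm:Gamconv}). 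The paper sidesteps compactness entirely: it identifies the limit $\Phi$ \emph{a priori} as the mean-zero solution of $-\Delta\Phi + \rho_1 - \rho_0 = 0$, and then applies the stability estimate of Lemma~\ref{lmm:Poisson} to the difference $\Phi^{\alpha_k} - \Phi$, which solves $-\Delta(\cdot) = \frac{1}{\alpha_k}\xi^{\alpha_k}$, obtaining $\|\Phi^{\alpha_k} - \Phi\|_{W^{1,1}} \le \frac{C}{\alpha_k}\|\xi^{\alpha_k}\| \to 0$ directly. You already have this exact tool in hand from your recovery-sequence construction, so the fix is immediate once you abandon the Rellich route.
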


To prove this theorem, we first show the $\Gamma$-convergence of $J+\one_{E_\alpha}$
to $J+\one_{E_\infty}$.
\begin{lemma}\label{lmm:Gamconv}
With the above setup, $J+\one_{E_\alpha}$ is $\Gm$-convergent to $J+\one_{E_\infty}$.
\end{lemma}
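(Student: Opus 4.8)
The plan is to invoke Theorem \ref{main}, which reduces the $\Gm$-convergence of $J+\one_{E_\alpha}$ to two separate verifications: that $J$ is $\Gm_{\seq}(\N, X^-, Y)$-convergent to itself (here $X$ carries the weak topology for $(\xi,\boldh)$ and $Y=W^{1,1}(\Omega)$), and that $\one_{E_\alpha}$ is $\Gm_{\seq}(\N, X, Y^-)$-convergent to $\one_{E_\infty}$. Since the statement involves $\alpha\to\infty$ along sequences, I will fix an arbitrary increasing sequence $\alpha_n\uparrow\infty$ and check both conditions for $(f_n):=(J)$ (constant in $n$) and $(\one_{E_{\alpha_n}})$; the conclusion then follows for every such sequence, which is what $\Gm$-convergence along $\alpha\to\infty$ means.

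For the first condition, note that $J$ does not depend on $n$ at all, so I would show $J$ is sequentially lower semicontinuous in $((\xi,\boldh),\Phi)$ jointly and, more precisely, that $\Gm_{\seq}(\N,X^-,Y)\lim_n J = J$. The $\liminf$-type inequality (the $\N^-$ direction in the notation, i.e. for every sequence $((\xi^n,\boldh^n),\Phi^n)\to((\xi,\boldh),\Phi)$ we have $\liminf_n J(\xi^n,\boldh^n,\Phi^n)\ge J(\xi,\boldh,\Phi)$) follows from lower semicontinuity of the total variation norm under weak-* convergence: $\boldh^n-\nabla\Phi^n\Rightarrow \boldh-\nabla\Phi$ because $\Phi^n\to\Phi$ in $W^{1,1}$ gives $\nabla\Phi^n\to\nabla\Phi$ strongly in $L^1$ hence weak-*, and $|\cdot|_p$-total variation is weak-* l.s.c. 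The matching $\limsup$ inequality (for the $X^-,Y$-variables one needs, given $(\xi,\boldh)$ and $\Phi$, a recovery sequence — but here we may simply take the constant sequence since $\Phi$ is not perturbed in the $Y$-slot and $J$ is already its own value on the constant sequence, and continuity of $\Phi\mapsto\nabla\Phi$ in $W^{1,1}$ handles the $\Phi$-variable). So $J$ equals its own $\Gm_{\seq}(\N,X^-,Y)$-limit.

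For the second condition I will apply Lemma \ref{lemma3} with $X$ the weak-topology space for $(\xi,\boldh)$ playing the role of the "$X$" in that lemma (the variable that is not perturbed, kept with general topology) and $Y=W^{1,1}$ playing the role of "$Y^-$" — wait, I need to match the roles carefully: in $E_\alpha$ the constraint couples $\xi$ (through $\frac1\alpha\xi$) and $\Phi$, so as $\alpha\to\infty$ the term $\frac1\alpha\xi$ vanishes and the limiting constraint is exactly $E_\infty$. Concretely I verify the two bullet conditions of Lemma \ref{lemma3}: (a) if $((\xi^n,\boldh^n),\Phi^n)\in E_{\alpha_n}$ with $(\xi^n,\boldh^n)\Rightarrow(\xi,\boldh)$ and $\Phi^n\to\Phi$ in $W^{1,1}$, then passing to the limit in $\int\nabla\Phi^n\cdot\nabla\varphi + \int(\rho_1-\rho_0-\frac1{\alpha_n}\xi^n)\varphi=0$ — the gradient term converges by $W^{1,1}$ convergence, the $\frac1{\alpha_n}\xi^n$ term converges to $0$ since $\xi^n$ has bounded mass along a convergent sequence and $\frac1{\alpha_n}\to0$, and $\int\Phi^n\to\int\Phi$ — gives $((\xi,\boldh),\Phi)\in E_\infty$; (b) given $((\xi,\boldh),\Phi)\in E_\infty$ and any $(\xi^n,\boldh^n)\Rightarrow(\xi,\boldh)$, I must construct $\Phi^n\to\Phi$ with $((\xi^n,\boldh^n),\Phi^n)\in E_{\alpha_n}$: solve the Poisson problem $-\Delta\Psi^n = \rho_1-\rho_0-\frac1{\alpha_n}\xi^n$ with mean zero — actually it is cleaner to set $\Phi^n = \Phi + \frac1{\alpha_n}\Psi^n$ where $\Psi^n$ solves $-\Delta\Psi^n = -\xi^n$ mean-zero (using Lemma \ref{lmm:Poisson}, noting $\Delta(\Phi+\frac1{\alpha_n}\Psi^n) = \rho_1-\rho_0-\frac1{\alpha_n}\xi^n$ up to sign conventions), then $\|\Phi^n-\Phi\|_{W^{1,1}}\le \frac1{\alpha_n}\|\Psi^n\|_{W^{1,q}}\|\cdot\| \le \frac{C}{\alpha_n}\|\xi^n\|\to 0$ since $\sup_n\|\xi^n\|<\infty$ on the convergent sequence (and $W^{1,q}\hookrightarrow W^{1,1}$ on the bounded domain). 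Thus $\one_{E_\infty}=\Gm_{\seq}(\N,X,Y^-)\lim_n\one_{E_{\alpha_n}}$.

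With both hypotheses of Theorem \ref{main} verified, the theorem yields $J+\one_{E_{\alpha_n}}\to J+\one_{E_\infty}$ in the sense of Definition \ref{def:Gmconv}; since $\alpha_n\uparrow\infty$ was arbitrary, this is exactly the asserted $\Gm$-convergence. The main obstacle I anticipate is part (b) of the Lemma \ref{lemma3} check — producing the recovery sequence for the constraint set — because it requires the quantitative stability estimate \eqref{eq:control} from Lemma \ref{lmm:Poisson} together with the uniform mass bound $\sup_n\|\xi^n\|<\infty$ (automatic for a weakly convergent sequence in $\M$ by Banach–Steinhaus), and one must be careful that the perturbation $\frac1{\alpha_n}\Psi^n$ indeed goes to zero in the $W^{1,1}$-topology of $Y$ and not merely weakly; the sign/normalization bookkeeping in the Poisson equation and the mean-zero condition also needs attention. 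The l.s.c. of $J$ is essentially the argument already carried out in the proof of Proposition \ref{pro:exist}, so that part is routine.
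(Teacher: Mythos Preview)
Your proposal is correct and follows essentially the same route as the paper: both apply Theorem~\ref{main}, verify $\Gm_{\seq}(\N,X^-,Y)\lim J=J$ via weak-$*$ lower semicontinuity of the total variation plus a constant recovery sequence in $X$ (with continuity in the strong $W^{1,1}$-variable $\Phi$), and verify $\one_{E_\infty}=\Gm_{\seq}(\N,X,Y^-)\lim\one_{E_\alpha}$ via Lemma~\ref{lemma3} by passing to the limit in the weak constraint for condition~(i) and, for condition~(ii), constructing $\Phi^n=\Phi+\phi^n$ where $\phi^n$ solves the Neumann Poisson problem with right-hand side $\frac{1}{\alpha_n}\xi^n$ and invoking the estimate~\eqref{eq:control} together with $\sup_n\|\xi^n\|<\infty$. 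The sign bookkeeping you flagged is indeed the only place needing care (the correct perturbation solves $-\Delta\phi^n=\frac{1}{\alpha_n}\xi^n$, not $-\xi^n$), but the argument is otherwise the paper's.
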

\begin{proof}
Here, we verify the two conditions in Theorem \ref{main}. 

We will first show that $\Gm_{\seq}(\N, X^-, Y)\underset{\alpha}{\lim} J(\xi^\alpha, \boldh^\alpha, \Phi^\alpha) = J(\xi, \boldh, \Phi)$. It suffices to prove the following two results:
\begin{equation*}
\begin{aligned}
     \inf_{(\xi^\alpha, \boldh^\alpha) \Rightarrow (\xi, \boldh)} \sup_{\Phi^\alpha \rightarrow \Phi} \limsup_{\alpha\ra+\infty} J(\xi^\alpha, \boldh^\alpha, \Phi^\alpha) &\leq J(\xi, \boldh, \Phi), \\
     \inf_{(\xi^\alpha, \boldh^\alpha) \hra (\xi, \boldh)}\inf_{\Phi^\alpha \rightarrow \Phi} \liminf_{\alpha\ra+\infty} J(\xi^\alpha, \boldh^\alpha, \Phi^\alpha) &\geq J(\xi, \boldh, \Phi).
\end{aligned}
\end{equation*}
These two relations, by Sandwich theorem, can ensure that both both the signs of $\N$ and $Y$ in the $\Gm_{\seq}$-limit can be omitted.

For any pair $((\xi, \boldh), \Phi)$ and for any convergent sequence $\Phi^\alpha \to \Phi$ in $W^{1,1}$, one can choose a particular weak convergent sequence $\left\{(\xi^\alpha, \boldh^\alpha) \right\}$ such that $(\xi^\alpha, \boldh^\alpha) \hra (\xi, \boldh)$, $\| \xi^\alpha \| \rightarrow \| \xi \|$ and that $\|\boldh^\alpha-\nabla\Phi^\alpha\|\to \|\boldh-\nabla\Phi \|$. Such sequence of $(\xi^\alpha, \boldh^\alpha)$ clearly exists (for example, one can choose the constant sequence $(\xi^\alpha, \boldh^\alpha)=(\xi, \boldh)$). Then, one has
\begin{equation*}
\begin{aligned}
    \limsup_{\alpha\ra+\infty} J(\xi^\alpha, \boldh^\alpha, \Phi^\alpha) & = \limsup_{\alpha\ra+\infty}\left( \int_\Om \lvert \xi^\alpha(\x)\rvert \dif \x + \int_\Om \lvert \boldh^\alpha(\x)-\nabla\Phi^\alpha(\x) \rvert_p \dif \x \right) \\
    & = \| \xi \| + \| \boldh-\nabla\Phi \| = J(\xi, \boldh, \Phi).
\end{aligned}
\end{equation*}
Hence, it holds that
\begin{equation}
    \inf_{(\xi^\alpha, \boldh^\alpha) \Rightarrow (\xi, \boldh)} \sup_{\Phi^\alpha \to \Phi} \limsup_{\alpha\ra+\infty} J(\xi^\alpha, \boldh^\alpha, \Phi^\alpha) \leq J(\xi, \boldh, \Phi).
\end{equation}
\begin{remark}
The strong convergence of $\Phi^\alpha$ here is essential to obtain the limit
$J(\xi, \boldh, \Phi)$ as an upper bound. If there is only weak convergence of $\nabla\Phi$ as used in the proof of Proposition \ref{pro:exist}, such an upper bound can not be established. 
\end{remark}

On the other hand, for any weak convergent sequence $(\xi^\alpha, \boldh^\alpha)\hra (\xi, \boldh)$ in $X$ and $\Phi^\alpha \to \Phi$ in $Y$, one has $\|\xi\|\le \liminf \|\xi^\alpha\|$
and $\|\boldh-\nabla\Phi\|\le \lim\|\boldh^\alpha-\nabla\Phi^\alpha\|$. Consequently,
\begin{equation*}
\begin{aligned}
    \liminf_{\alpha\ra+\infty}J(\xi^\alpha, \boldh^\alpha, \Phi^\alpha) & = \liminf_{\alpha\ra+\infty}\left( \int_\Om \lvert \xi^\alpha(\x)\rvert \dif \x + \int_\Om \lvert \boldh^\alpha(\x)-\nabla\Phi^\alpha(\x) \rvert_p \dif \x \right) \\
    & \geq \| \xi \| + \| \boldh-\nabla\Phi \| = J(\xi, \boldh, \Phi).
\end{aligned}
\end{equation*}
It follows that
\begin{equation}
    \inf_{(\xi^\alpha, \boldh^\alpha) \hra (\xi, \boldh)}\inf_{\Phi^\alpha \ra \Phi} \liminf_{\alpha\ra+\infty} J(\xi^\alpha, \boldh^\alpha, \Phi^\alpha) \geq J(\xi, \boldh, \Phi).
\end{equation}
Combining the two formulas, one obtains
\begin{equation}
    \Gm_{\seq}(\N,X^-, Y)\underset{\alpha}{\lim} J(\xi^\alpha, \boldh^\alpha, \Phi^\alpha) = J(\xi, \boldh, \Phi).
\end{equation}

Next, we will show $\one_{E_\infty} =\Gamma_{\seq}(\N, X, Y^-)\underset{\alpha}{\lim}\one_{E_\alpha}$. 
Using  Lemma \ref{lemma3}, it suffices to show that
\begin{itemize}
    \item [(i)] If $(\xi^\alpha, \boldh^\alpha) \Rightarrow (\xi, \boldh)$, $\Phi^\alpha \ra \Phi$ in $W^{1,1}$, $((\xi^\alpha, \boldh^\alpha), \Phi^\alpha) \in E_\alpha$ for infinitely many $\alpha$, then $((\xi, \boldh), \Phi) \in E_\infty$;
    \item [(ii)] If $((\xi, \boldh), \Phi) \in E_\infty$ and $(\xi^\alpha, \boldh^\alpha) \Rightarrow (\xi, \boldh)$, then there exists $\Phi^\alpha \ra \Phi$ such that $((\xi^\alpha, \boldh^\alpha), \Phi^\alpha) \in E_\alpha$ for $\alpha$ large enough.
\end{itemize}
\noindent
For (i), we consider the sequence $\alpha$ such that $((\xi^\alpha, \boldh^\alpha), \Phi^\alpha) \in E_\alpha$, then
\begin{equation}\label{3.17}
\int_{\Omega}\Phi^\alpha\,\dif\x=0, \quad \int_\Omega \nabla\Phi^\alpha \cdot \nabla \varphi ~ \dif \x  +\int_\Omega (\rho_1 - \rho_0 - \inv{\alpha} \xi^\alpha)\varphi~ \dif \x = 0, \quad \forall \varphi \in C_b^1(\bar{\Omega}).
\end{equation}
Since $\varphi\in C_b$ and $\nabla\varphi\in C_b$, one clearly has 
\[
\int_\Omega \boldm^\alpha \cdot \nabla \varphi
\to \int_\Omega \boldm \cdot \nabla \varphi, \quad 0=\int_{\Omega}\Phi^\alpha\,\dif\x\to \int_{\Omega}\Phi\,\dif\x.
\]
As $\xi^\alpha \Rightarrow \xi$, it is uniformly bounded and 
\begin{equation}
    \lim_{\alpha\ra+\infty} \int_\Om \frac{1}{\alpha} \xi^\alpha \varphi ~ \dif \x = 0.
\end{equation}
Hence, it is  easy to see that $(\xi, \boldh, \Phi) \in E_\infty$. Note that here $\lim \int_{\Omega}\boldh^\alpha\cdot\nabla\varphi\,\dif\x=\int_{\Omega}\boldh\cdot\nabla\varphi\,\dif\x$ using the fact that  $X$ is closed under the weak convergence of measures.

\noindent For (ii) we consider the following Poisson equation:
\begin{equation}\label{Laplacian}
    \left\{
        \begin{aligned}
             -\Delta u & = \frac{1}{\alpha} \xi^\alpha ~ \text{in}~ \Om, \\
             \frac{\partial u}{\partial \boldsymbol{n}} & = 0 ~ \text{on} ~\partial\Om.
        \end{aligned}
    \right.
\end{equation}
 Here the sequence $\left\{\xi^\alpha\right\}$ is given in (ii) which weakly converges to $\xi$.  Consequently, by Lemma \ref{lmm:Poisson}, there exists
$\phi^\alpha$ with $\int \phi^\alpha\,\dif\x=0$ and
\begin{equation*}
    \lim_{\alpha\ra+\infty} \| -\nabla \phi^\alpha \| \leq \lim_{\alpha\ra+\infty} \frac{C}{\alpha} \| \xi^\alpha \| = 0.
\end{equation*}

Define $\Phi^\alpha = \Phi +\phi^\alpha$, one clearly has $\int \Phi^\alpha\,\dif\x=0$,
$\Phi^\alpha\to \Phi$
and by the definition of the weak solution of the Poisson equation that
\begin{equation}
    \int_\Omega \nabla\Phi^\alpha \cdot \nabla \varphi ~ \dif \x  +\int_\Omega \left(\rho_1 - \rho_0 - \inv{\alpha} \xi^\alpha \right)\varphi~ \dif \x = 0, \quad \text{for} ~ \forall \varphi \in C_b^1(\bar{\Omega}),
\end{equation}
which implies that $(\xi^\alpha, \boldh^\alpha, \Phi^\alpha) \in E_\alpha$ for all $\alpha$.
\end{proof}

Now, we prove the main result in this section.
\begin{proof}[Proof of Theorem \ref{thm:convuot}]

Suppose $\boldm$ is a feasible solution to problem \eqref{eq:otmath}.
Clearly, $(0, \boldm)$ is also a feasible solution to problem \eqref{eq:uotmath} for any $\alpha > 0$. Therefore,
\begin{equation*}
    \int_\Omega \lvert \boldm^\alpha \rvert_p + \lvert \xi^\alpha \rvert \dif \x \leq \int_\Omega \lvert \boldm \rvert_p \dif \x < +\infty.
\end{equation*}
where $(\xi^\alpha, \boldm^\alpha)$ is an optimal solution of problem \eqref{eq:otmath}. 
Then, by Lemma \ref{lmm:Poisson}, there exists $\Phi^\alpha\in W^{1,1}$ with $\int \Phi^\alpha\,\dif\x=0$ that is a weak solution to
\[
-\Delta\Phi^\alpha+\rho_1-\rho_0=\frac{1}{\alpha}\xi^\alpha, \quad \frac{\partial\Phi^\alpha}{\partial n}=0
\]
with
\[
  \|\Phi^\alpha\|_{W^{1,1}}\le C\|\rho_0-\rho_1-\frac{1}{\alpha}\xi^\alpha\|
  \le C\left(2+\frac{1}{\alpha}\int|\boldm|_p\,\dif\x\right).
\]
Moreover, define
\[
\boldh^\alpha=\boldm^\alpha+\nabla\Phi^\alpha.
\]
It is easy to see that $\boldh^\alpha\in \mathcal{H}$ and consequently, $((\xi^\alpha, \boldh^\alpha), \Phi^\alpha)\in \one_{E_\alpha}$. Moreover,
\[
\|\boldh^\alpha\|\le \|\boldm^\alpha\|+\|\nabla\Phi^\alpha\|\le 
C\left(2+(1+\alpha^{-1})\int|\boldm|_p\,\dif\x \right).
\]
The first claim follows if $\alpha\ge \alpha_0>0$.

We now show that for any optimal sequence $\{(\xi^\alpha, \boldm^\alpha)\}$ with 
$\boldm^\alpha=\boldh^\alpha-\nabla\Phi^\alpha$ as above, there exists a convergent subsequence $(\xi^{\alpha^k}, \boldh^{\alpha^k}) \Rightarrow (\xi, \boldh)$ and $\Phi^{\alpha_k} \to \Phi$.

Using the Banach-Alaoglu theorem we have that any bounded set in $X$ is precompact. Consequently, there is a subsequence  $(\xi^{\alpha_k}, \boldh^{\alpha_k}) \Rightarrow (\xi, \boldh)\in X$.
Moreover, let $\Phi$ with $\int\Phi\,\dif\x=0$ be the solution to
\begin{equation}
    \left\{\begin{aligned}
        & -\Delta \Phi + \rho_1 - \rho_0 =0 ~ \text{in}~ \Om,\\
        & \frac{\partial \Phi}{\partial \boldsymbol{n}} = 0 ~ \text{on} ~ \partial \Om.
    \end{aligned}\right.
\end{equation}
Since $\xi^{\alpha_k}\Rightarrow \xi$, $\xi^{\alpha_k}$ is thus uniformly bounded. Then as $\alpha_k \rightarrow +\infty$, one has
\[
\|\Phi^{\alpha_k}-\Phi\|_{W^{1,1}}\le \frac{C}{\alpha_k}\|\xi^{\alpha_k}\|  \to 0.
\]

Using Lemma \ref{lmm:Gamconv}, one obtains  $\Gm$-convergence of the functional. Using Theorem \ref{main}  it follows that $(\xi, \boldh, \Phi)$ is a minimizer of $J+\one_{E_\infty}$. Hence, the conclusion follows.  Moreover, since $(\xi^\infty, \boldm^\infty)$ is an optimal pair of problem \eqref{eq:otmath}, it is obvious that  $\xi^\infty$ should be $0$. 
\end{proof}

\section{Convergence in discrete setting and the asymptotic preserving property}\label{sc:discrete}
In this section, we focus on the discretized problems of the Beckmann formulation for UOT and OT problems. We take $\Om$ to be a bounded rectangular domain in $\R^d$. We will show that the convergence from UOT to OT is preserved in the discrete setting so that the discretization is asymptotic preserving, which guarantees that a numerical method for the optimization problems can be applied for the discrete problems in a uniform manner, and the optimizer of the UOT can converge to that for the OT problem along the limit. Moreover, we show that when $\alpha$ is larger than some critical value that is only related to the dimension $d$ and the width of the domain $L$, and independent to the mesh size $h$, the minimizer of the discrete UOT problem is then reduced to that for OT, which we call finite convergence. We also present the algorithm proposed in \cite{li2018parallel} and applied to solve both UOT and OT problem and show that the iterates for UOT will reduce to that for OT as the penalty parameter $\alpha > M$ for some constant $M$ dependant on the discrete problem merely.

\subsection{Discretized problems}
We first formulate the discrete UOT and OT problems. We use the same discrete scheme as \cite{li2018parallel, doi:10.1137/18M1219813}. Let $\Om = [0, 1]^d$ be a $d$ dimensional rectangular domain, and $\Om_h$ be the discrete mesh-grid of $\Om$ with step size $h$, i.e.
\begin{equation}\label{discrete Omega}
    \Om_h = \{0, h, 2h, \cdots, 1\}^d.
\end{equation}
Let $N = 1/h$ be the grid size. For a more general case $\Om = [0, L]^d$, it can be transformed to $\Om = [0, 1]^d$ by scaling. For all $x \in \Omega_h$, $x$ is a $d$-dimensional vector, where the  $i$-th component $x_i$ takes values from $\{0,h,2h,3h,\cdots,1\}$. The discretized distributions $\rho_h^0 = \{\rho^0(x)\}_{x \in \Omega_h} , \rho_h^1 = \{\rho^1(x)\}_{x \in \Omega_h}$ and $\eta_h = \{\eta(x)\}_{x\in \Om_h}$ are all $(N + 1)^d$ tensors. The discretized flux $\boldm_h = \{ \boldm(x) \}_{x \in \Omega_h}$ is a $(N + 1)^d\times d$ tensor, which can be regarded as a map from $\Omega_h $ to $\mathbb{R}^d$. Then the discretized problem for  \eqref{eq:uotmath0} is 
\begin{equation}\label{duot}
\begin{aligned}
\underset{m_h,\eta_h}{\min}
&\sum_{x \in \Omega_h} \left( \lvert \boldm_h(x) \rvert_p h^d  + \alpha\lvert \eta_h(x) \rvert h^d\right)  \\
\st ~~ & \mathrm{div}^h(\boldm_h(x)) - \eta_h(x) = \rho_h^0(x) - \rho_h^1(x),\qquad \forall x \in \Omega_h, \\
\end{aligned}	
\end{equation}
where the discrete boundary conditions are given such that $\boldm_{h,i}(x_{-i}, x_i) = 0$ if $x_i = 1$ and $\boldm_{h,i}(x_{-i}, x_i - h) = 0$ if $x_i = 0$ for $\forall i \in \{1,2,\cdots, d\}$, and  $\sum_{x\in\Om_h}\eta_h(x) = 0$. Here the notion “-$i$” refers to all the components excluding $i$, i.e.
\begin{equation*}
   x_{-i} = (x_1, \cdots, x_{i - 1}, x_{i + 1}, \cdots, x_d), 
\end{equation*}
and for any $z \in \R$
\begin{equation*}
    \boldm_{h,i}(x_{-i}, z) = \boldm_{h,i}(x_1, \cdots, x_{i - 1}, z, x_{i + 1}, \cdots, x_d).
\end{equation*}
Note that we use the ghost point $(x_{-i}, 0 - h) = 0$ for each $i$ and therefore, only the condition for $x_i=1$ is used explicitly in the domain $\Om_h$ while the condition at $x_i=0$ is only implicitly used for the definition of the discrete divergence operator $\mathrm{div}^h(\cdot)$, which is defined as
\begin{equation*}
\mathrm{div}^h(\boldm_h(x)) = \sum_{i = 1}^{d}D_{h,i}\boldm_h(x), \\ 
\end{equation*}
and for $\forall i \in \{1,2,\cdots, d\}$
\begin{equation}
D_{h,i}\boldm_h(x) = \left\{
\begin{aligned}
& (\boldm_{h,i}(x_{-i},x_i))/h, \qquad x_i = 0, \\
& (\boldm_{h,i}(x_{-i},x_i) - \boldm_{h,i}(x_{-i},x_i - h))/h, \qquad 0 < x_i < 1, \\
& (-\boldm_{h,i}(x_{-i},x_i - h))/h, \qquad x_i = 1, \\
\end{aligned}	
\right.
\end{equation}
which makes the discrete approximation be consistent with the zero-flux boundary condition. In the above definition, $\boldm_h(x) \in \mathbb{R}^d$ denotes  the flow at point $x$ and $ \boldm_{h,i}(x) \in \mathbb{R}$ denotes the $i$-th component of $\boldm_{h}(x)$. 
Moreover, we define $f(\cdot) = \sum_{\Om_h}\lvert \cdot \rvert_p h^d$ as a discrete $\ell_{p,1}$ norm on $\Om_h$, then the problem \eqref{duot} can be reformulated as 
\begin{equation}\label{duot1}
\begin{aligned}
\underset{\boldm_h,\eta_h}{\min}~ & f(\boldm_h) + \alpha f(\eta_h), \\
\st ~~ & \mathrm{div}^h (\boldm_h) - \eta_h = \rho_h.
\end{aligned}	
\end{equation}
 Similarly, the discrete OT problem (\ref{eq:otmath0}) is given as
\begin{equation}\label{dot}
\begin{aligned}
\underset{\boldm_h}{\min}~ & f(\boldm_h)\\
\st ~~ & \ddiv (\boldm_h) = \rho_h,
\end{aligned}	
\end{equation}
with the zero-flux boundary condition in \eqref{duot}.

\subsection{A primal-dual hybrid algorithm}
With the discrete formulation, we can apply a primal-dual hybrid algorithm \cite{EZC10,chambolle2011first} to solve both the UOT and OT problems. Note  that this algorithm is also adopted in  \cite{li2018parallel} for the OT problem. We first give some definitions on the discrete space $\Om_h$:
\begin{equation*}
    \langle \boldm_h, \boldm_h^{\prime} \rangle_h = \sum_{x\in\Om_h}\boldm_h(x)\boldm_h^{\prime}(x) h^d,  \quad \lVert \boldm_h \rVert_{h,2}^2 = \sum_{x\in \Om_h}\lvert \boldm(x) \lvert_2^2 h^d.
\end{equation*} 
 For the OT problem (\ref{dot}), we solve the following min-max reformulation
\begin{equation}
   \min_{\boldm_h}\max_{\varphi_h} ~L(\boldm_h,\varphi_h) =
f(\boldm_h) + \langle \varphi_h, \ddiv \boldm_h - \rho_h \rangle_{h}
\end{equation}
by the primal-dual hybrid algorithm whose updating rule is given as
\begin{equation}
\begin{aligned}
& \boldm_h^{k + 1} = \underset{\boldm_h}{\arg\min}~ L(\boldm_h,\varphi_h^k) + \frac{1}{2\mu}\lVert \boldm_h - \boldm_h^k \rVert_{h,2}^2, \\
& \tilde{\boldm}_h^{k + 1} = 2\boldm_h^{k + 1} - \boldm_h^k, \\
& \varphi_h^{k + 1} = \underset{\varphi_h}{\arg\max}~L(\tilde{\boldm}_h^{k + 1},\varphi_h^k) - \frac{1}{2\tau}\lVert \varphi_h - \varphi_h^k \rVert_{h,2}^2.
\end{aligned}	
\end{equation}
The update is equivalent to
\begin{equation}\label{CP_ot}
\begin{aligned}
& \boldm_h^{k + 1} = \mathrm{Prox}_{\mu f}(\boldm_h^k - \mu (\ddiv)^*(\varphi_h^k)), \\
& \tilde{\boldm}_h^{k + 1} = 2\boldm_h^{k + 1} - \boldm_h^k, \\
& \varphi_h^{k + 1} = \varphi_h^{k} + \tau(\ddiv(\tilde{\boldm}_h^{k+ 1}) - \rho_h),
\end{aligned}	
\end{equation}
where $\operatorname{Prox}_{\mu f}$ is the proximity operator of function $\mu f$ defined as
\begin{equation*}
    \mathrm{Prox}_{\mu f}(\boldm_h^k - \mu (\ddiv)^*(\varphi_h^k)) = \underset{\boldm_h}{\arg\min}~ \mu f(\boldm_h) + \frac{1}{2}\lVert \boldm_h - (\boldm_h^k - \mu (\ddiv)^*(\varphi_h^k)) \rVert_{h,2}^2,
\end{equation*}
and $\mu$, $\tau$ are  algorithmic parameters and $(\ddiv)^*$ represents the conjugate operator of $\ddiv$. 

By the definition of conjugate operator, for $\forall \boldm_h$ and $\forall u_h \in \Om_h$, we have
\begin{equation*}
	\langle \ddiv (\boldm_h), u_h\rangle_h = \langle \boldm_h, (\ddiv)^* (u_h)\rangle_h,
\end{equation*}
then it is easy to check that $(\ddiv)^* = - \nabla_h$ and 
\begin{equation*}
	\nabla_h(u_h) = (\partial_{h,1} u_h, \partial_{h,2} u_h, \cdots, \partial_{h,d} u),
\end{equation*}
where each $\partial_{h,i}u_h$ is 
\begin{equation}
	\partial_{h,i} u_h(x_1, x_2, \cdots, x_d) = \left\{
		\begin{aligned}
			& (u(x_{-i}, x_i + h) - u(x_{-i}, x_i)) / h, \quad 0 \leq x_i < 1, \\
			& 0, \quad x_i = 1
		\end{aligned}
	\right.
\end{equation}
for $i = 1,2,\cdots, d$. According to  \cite{chambolle2011first}, the algorithm is ensured to be convergent if $\mu \tau \|\ddiv\|^2 < 1$.  

Similarly, we solve the UOT problem (\ref{duot}) with the following min-max reformulation
\begin{equation}
\underset{\boldm_h,\eta_h}{\min}\underset{\varphi_h}{\max} ~L(\boldm_h,\eta_h,\varphi_h) =
f(\boldm_h) + \alpha f(\eta_h) + \langle \varphi_h,\ddiv(\boldm_h) - \eta_h - \rho_h \rangle_{h},	
\end{equation}
and by the primal-dual hybrid algorithm as follows
\begin{equation}
\begin{aligned}
& (\boldm_h^{k + 1},\eta_h^{k + 1}) = \underset{\boldm_h,\eta_h}{\arg\min~} L(\boldm_h,\eta_h,\varphi_h^k) + \frac{1}{2\mu}(\lVert \boldm_h - \boldm_h^k \rVert_{h,2}^2 + \lVert \eta_h - \eta_h^k \rVert_{h,2}^2), \\
& \tilde{\boldm}_h^{k + 1} = 2\boldm_h^{k + 1} - \boldm_h^k, 
\qquad \tilde{\eta}_h^{k + 1} = 2\eta_h^{k + 1} - \eta_h^k, \\
& \varphi_h^{k  + 1} = \underset{\varphi_h}{\arg\max}~L(\tilde{\boldm}_h^{k + 1},\tilde{\eta}_h^{k + 1},\varphi_h^k) - \frac{1}{2\tau}\lVert \varphi_h - \varphi_h^k \rVert_{h,2}^2, 
\end{aligned}
\end{equation}
which can be written as
\begin{equation}\label{CP_uot}
\begin{aligned}
& \boldm_h^{k + 1} = \mathrm{Prox}_{\mu f}(\boldm_h^k - \mu (\ddiv)^*(\varphi_h^k)) \\
& \eta_h^{k + 1} = \mathrm{Prox}_{\alpha\mu f}(\eta_h^k + \mu \varphi_h^k) \\
& \tilde{\boldm}_h^{k + 1} = 2\boldm_h^{k + 1} - \boldm_h^k,
\qquad \tilde{\eta}_h^{k + 1} = 2\eta_h^{k + 1} - \eta_h^k, \\
& \varphi_h^{k + 1} = \varphi_h^{k} + \tau(\ddiv(\tilde{\boldm}_h^{k+ 1}) - \tilde{\eta}_h^{k+ 1} - \rho_h).
\end{aligned}	
\end{equation}
The algorithm is convergent if $\mu\tau \lVert [\ddiv, -I] \rVert^2 < 1$ and we terminate the algorithm when the primal-dual gap $R_n^k$ 
\begin{equation}\label{termat}
\begin{aligned}
R_h^k & = \frac{1}{\mu}(\lVert \boldm_h^{k + 1} - \boldm_h^{k} \rVert_{h,2}^2 + \lVert \eta_h^{k+1} - \eta_k^h \rVert^2_{h,2})
+ \frac{1}{\tau}\lVert \varphi_h^{k + 1} - \varphi_h^{k} \rVert_{h,2}^2 \\
& - 2\langle \varphi_h^{k + 1} - \varphi_h^{k},\ddiv(\boldm_h^{k + 1} - \boldm_h^{k}) - (\eta_h^{k + 1} - \eta_h^{k})\rangle_h ,
\end{aligned}
\end{equation}
falls below a predefined threshold $\epsilon$.

\subsection{Convergence of the discrete UOT problem}

In Section \ref{sc:Convergence from UOT to OT} we showed the $\Gm$-convergence from UOT to OT in continuous case. For the discrete problem, we also have the similar proposition.
\begin{proposition}
    Define $X_h := \mathbb{R}^{(N + 1)^d}$ and $Y_h := \mathbb{R}^{(N + 1)^d\times d}$. Taking $\xi_h := \alpha \eta_h$ in (\ref{eq:uotmath0}) we have that
    \begin{equation}
        J_h(\xi_h, \boldm_h) := \sum_{x \in \Omega_h} \left( \lvert \boldm_h(x) \rvert_p h^d  + \lvert \xi_h(x) \rvert h^d\right),
    \end{equation}
    and we define
    \begin{equation*}
    \begin{aligned}
        & E_h^\alpha := \left\{(\xi_h, \boldm_h) \in X_h \times Y_h: \ddiv(\boldm_h) - \frac{1}{\alpha} \xi_h - \rho_h = 0. \right\},\\
        & E_h^\infty := \left\{(\xi_h, \boldm_h) \in X_h \times Y_h: \ddiv(\boldm_h) - \rho_h = 0. \right\}.
    \end{aligned}
    \end{equation*}
    Then  we have the $\Gm$-convergence from $J_h + \one_{E_h^\alpha}$ to $J_h + \one_{E_h^\infty}$.
\end{proposition}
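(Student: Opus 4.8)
The plan is to carry out the discrete analogue of the argument behind Lemma~\ref{lmm:Gamconv} and Theorem~\ref{thm:convuot}, which is considerably lighter here because all the objects live on finite-dimensional spaces equipped with their ordinary Euclidean topology. First I would cast the problem into the framework of Theorem~\ref{main}, taking the source space as $X=X_h$ and the flux space as $Y=Y_h$. One should keep in mind, exactly as in the continuous case where testing the constraint against $\varphi\equiv1$ forces $\int\xi=0$, that the discrete divergence has vanishing total sum (a telescoping identity under the zero-flux boundary conditions) and $\sum_x\rho_h(x)=0$, so that the constraint in $E_h^\alpha$ already forces $\sum_x\xi_h(x)=0$; thus $\xi_h$ effectively ranges over the zero-mean hyperplane, which is precisely the solvability condition for the discrete divergence equation — the discrete counterpart of the compatibility condition in Lemma~\ref{lmm:Poisson}. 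Since $J_h$ is independent of $\alpha$ and is a finite weighted sum of $\ell_p$-norms of the $\boldm_h(x)$ and absolute values of the $\xi_h(x)$, it is Lipschitz continuous; hence for the constant sequence $J_n\equiv J_h$ one trivially has $\Gamma_{\seq}(\N,X_h^-,Y_h)\lim_\alpha J_h=J_h$, because a constant sequence serves simultaneously as an upper and a lower recovery sequence, so all the signs collapse. This settles the first hypothesis of Theorem~\ref{main}.

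The substance is the identity $\one_{E_h^\infty}=\Gamma_{\seq}(\N,X_h,Y_h^-)\lim_\alpha\one_{E_h^\alpha}$, which I would obtain from Lemma~\ref{lemma3} by verifying its two conditions for an arbitrary sequence $\alpha_n\uparrow+\infty$, with $E_n=E_h^{\alpha_n}$ and $E_\infty=E_h^\infty$. The closedness condition is immediate: if $\xi_h^n\to\xi_h$, $\boldm_h^n\to\boldm_h$ and $\ddiv(\boldm_h^n)-\alpha_n^{-1}\xi_h^n-\rho_h=0$ for infinitely many $n$, then $\ddiv$ is linear hence continuous, while $\{\xi_h^n\}$ is bounded (being convergent) and $\alpha_n\to\infty$, so passing to the limit gives $\ddiv(\boldm_h)=\rho_h$, i.e.\ $(\xi_h,\boldm_h)\in E_h^\infty$. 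For the recovery condition, given $(\xi_h,\boldm_h)\in E_h^\infty$ and any $\xi_h^n\to\xi_h$, I would fix once and for all a linear right inverse $R$ of $\ddiv$ on the zero-mean hyperplane — such $R$ exists because $\ddiv$ maps onto that hyperplane, which one checks directly from its definition on the connected grid $\Om_h$ (again the discrete counterpart of Lemma~\ref{lmm:Poisson}) — and set $\boldm_h^n:=\boldm_h+\alpha_n^{-1}R\xi_h^n$. Then $\ddiv(\boldm_h^n)=\rho_h+\alpha_n^{-1}\xi_h^n$, so $(\xi_h^n,\boldm_h^n)\in E_h^{\alpha_n}$, and $\|\boldm_h^n-\boldm_h\|\le\alpha_n^{-1}\|R\|\,\|\xi_h^n\|\to0$, so $\boldm_h^n\to\boldm_h$. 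Plugging these two facts into Theorem~\ref{main} gives the $\Gamma$-convergence of $J_h+\one_{E_h^\alpha}$ to $J_h+\one_{E_h^\infty}$.

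The single step that needs genuine attention is the recovery construction: one must ensure $\rho_h+\alpha_n^{-1}\xi_h^n$ lies in the range of $\ddiv$ so that the perturbed divergence equation is solvable, and this is exactly what the zero-mean constraint on $\xi_h$ provides — without it no recovery sequence could exist and the asserted $\Gamma$-limit would fail. Everything else is routine finite-dimensional bookkeeping, and the accompanying statement on minimizers follows as in Theorem~\ref{thm:convuot}: since $\xi_h=0$ paired with any solution of $\ddiv\boldm_h=\rho_h$ is feasible for every $\alpha$, the UOT optimizers all lie in a fixed, hence compact, sublevel set of $J_h$, so a subsequence converges; its limit minimizes $J_h+\one_{E_h^\infty}$ by the corollary of Theorem~\ref{main}, and the limiting source vanishes because $\|\xi_h\|$ is nonnegative and decoupled from the constraint defining $E_h^\infty$.
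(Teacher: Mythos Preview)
Your argument is essentially the paper's own proof: both invoke Theorem~\ref{main} via Lemma~\ref{lemma3}, dispatch the $J_h$ part by continuity of a fixed functional, obtain closedness by passing to the limit in the linear constraint, and build the recovery sequence by solving the perturbed divergence equation $\ddiv(\delta\boldm_h)=\alpha_n^{-1}\xi_h^n$ --- your linear right inverse $R$ is exactly the paper's least-norm solution. One remark on the point you yourself flag: in condition~(ii) of Lemma~\ref{lemma3} the sequence $\xi_h^n\to\xi_h$ is arbitrary in $X_h=\mathbb{R}^{(N+1)^d}$, not a priori zero-mean, so strictly speaking both your argument and the paper's tacitly require the ambient source space to be the zero-mean hyperplane (as imposed by the boundary conditions in \eqref{duot}); without that restriction the recovery step cannot be completed and the stated $\Gamma$-limit would indeed be $+\infty$ at points $(\xi_h,\boldm_h)\in E_h^\infty$ with $\sum_x\xi_h(x)\neq0$.
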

\begin{proof}
    For discrete case, the convergence in the space $X_h$ and $Y_h$ reduces to pointwise convergence, and it is obvious that
    \begin{equation}
        J_h(\xi_h, \boldm_h) = \Gm_{\seq}(\mathbb{N}, X_h^{-}, Y_h)\lim_\alpha J_h(\xi_h, \boldm_h).
    \end{equation}
    Then our goal is to verify that $E_h^\alpha$ and $E_h^\infty$ also satisfy the conditions given in Lemma \ref{lemma3}:
    \begin{itemize}
        \item [(i)] If $\xi_h^\alpha \rightarrow \xi_h, \boldm_h^\alpha \rightarrow \boldm_h$ and $(\xi_h^\alpha, \boldm_h^\alpha) \in E_h^\alpha$ for infinitely many $\alpha$, then $(\xi_h, \boldm_h) \in E_h^\infty$;
        \item [(ii)] If $(\xi_h, \boldm_h) \in E_h^\infty$ and $\xi_h^\alpha \rightarrow \xi_h$, then there exists $\boldm_h^\alpha \rightarrow \boldm_h$ such that $(\xi_h^\alpha, \boldm_h^\alpha) \in E_h^\alpha$ for $\alpha$ large enough.
    \end{itemize}
    For (i), due to the fact that $(\xi_h^\alpha, \boldm_h^\alpha) \in E_h^\alpha$ for infinitely many $\alpha$, we have
    \begin{equation}\label{ehalpha}
        \ddiv(\boldm_h^\alpha) - \frac{1}{\alpha}\xi_h^\alpha - \rho_h = 0.
    \end{equation}
    As $\xi_h^\alpha \rightarrow \xi_h$, the sequence $\left\{\xi_h^\alpha\right\}$ is uniformly bounded and $\ddiv(\boldm_h^\alpha)$ is a simple matrix-vector multiplication for discrete problem, then let $\alpha$ goes to infinity in the equality \eqref{ehalpha} we  obtain that:
    \begin{equation}\label{lineareqs0}
        \ddiv(\boldm_h) - \rho_h = 0,
    \end{equation}
    which leads to $(\xi_h, \boldm_h) \in E_h^\infty$. \\
    
    For (ii), as $(\xi_h, \boldm_h) \in E_h^\infty$, for any $\alpha > 0$ and the given sequence $\left\{\xi_h^\alpha\right\}$, it suffices to solve the equations directly:
    \begin{equation}\label{lineareqs}
        \ddiv(\dm_h^\alpha) = \frac{1}{\alpha}\xi_h^\alpha,
    \end{equation}
    where $\dm_h^\alpha$ also satisfies the discrete boundary conditions $\dm_{h,i}(x_{-i}, x_i - h) = 0$ if $x_i = 1$ for $\forall i \in \{1,2,\cdots, d\}$. For any $u \in \operatorname{ker}((\ddiv)^*)$, $\nabla_h u = 0$ yields $u \equiv C$ for some constant $C$. Therefore, for $\forall u \in \operatorname{ker}((\ddiv)^*)$, we have
    \begin{equation*}
        \langle \xi_h^\alpha, u \rangle_h = C\sum_{x \in \Om_h}\xi_h^\alpha(x)h^d = 0,
    \end{equation*}
    which implies $\xi_h^\alpha \perp \operatorname{ker}((\ddiv)^*)$. Therefore the linear system \eqref{lineareqs} is soluble for any $\alpha > 0$. Moreover, with the condition $\xi_h^\alpha \rightarrow \xi_h$, it is obvious that the right side in \eqref{lineareqs} $\frac{1}{\alpha}\xi_h^\alpha \rightarrow 0$ as $\alpha$ goes to infinity. Hence for each $\alpha$, we choose the solution that has the least norm $\dm_h^\alpha$ (which should be perpendicular to kernel space of $\ddiv$) such  that $\dm_h^\alpha \rightarrow 0$ as $\alpha \rightarrow +\infty$. Then define 
    \begin{equation*}
        \boldm_h^\alpha = \boldm_h + \dm_h^\alpha
    \end{equation*}
    and correspondingly $\boldm_h^\alpha \rightarrow \boldm_h$ and $(\xi_h^\alpha, \boldm_h^\alpha) \in E_h^\alpha$ for any $\alpha > 0$.\\
    Using Lemma \ref{lemma3} we obtain that $\one_{E_h^\infty} = \Gm_{\seq}(\mathbb{N}, X_h, Y_h^{-})\underset{\alpha}{\lim}\one_{E_h^\alpha}$, and using Theorem \ref{main} one can conclude that $J_h + \one_{E_h^\alpha}$ is $\Gm$-convergent to $J_h + \one_{E_h^\infty}$.
\end{proof}

The $\Gm$-convergence for discrete problems indicate that the minimizers, which are bounded obviously, would have a convergent subsequence with the limit being the minimizer of the discrete OT problem.

In fact, for the discrete problem, we can show a stronger convergence for the minimizers of the two problems, as shown in the following theorem.
\begin{theorem}\label{discrete convergence}
	Suppose $\Om = [0, L]^d$ where $d$ is the dimension of the space and $L$ is the length of the interval in each dimension. Let $\Om_h= [0, L]^d_h$ be the discrete meshgrid of $\Om$ defined in (\ref{discrete Omega}) and $Nh = L$.  If $\alpha > \frac{dL}{2}$, then the optimal $\eta_h^*$ of the discrete UOT problem equals to $0$. Consequently, the minimizer reduces to that for OT.
\end{theorem}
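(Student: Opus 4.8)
\emph{Plan.} The idea is that when $\alpha$ is large, creating or destroying mass through $\eta_h$ is never worthwhile: any feasible source can be absorbed into an extra flux at a transport cost proportional to $dL$ per unit of $|\eta_h|$, so paying $\alpha|\eta_h|$ instead is strictly more expensive once $\alpha>dL/2$. Hence at an optimum $\eta_h^\ast\equiv 0$, and then \eqref{duot} is literally \eqref{dot}. (That an optimum exists is clear: eliminating $\eta_h=\ddiv(\boldm_h)-\rho_h$ via the constraint turns the objective into a coercive convex function of $\boldm_h$ on a finite-dimensional space.)

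\emph{Main step.} Let $(\boldm_h^\ast,\eta_h^\ast)$ be optimal for \eqref{duot}. Since $\sum_x\rho_h^0(x)=\sum_x\rho_h^1(x)$ and $\sum_x\ddiv(\boldm_h^\ast(x))=0$ by the discrete divergence theorem with zero-flux boundary data, summing the constraint gives $\sum_x\eta_h^\ast(x)=0$. Put $\sigma:=-\eta_h^\ast=\sigma^+-\sigma^-$ with $\sigma^\pm\ge 0$ of common mass $m_0:=\sum_x\sigma^\pm(x)h^d=\tfrac12\sum_x|\eta_h^\ast(x)|h^d$; if $m_0=0$ we are done. I claim one can build a flux $\boldsymbol{q}_h$ obeying the zero-flux boundary conditions with $\ddiv(\boldsymbol{q}_h)=\sigma$ and
\[
\sum_{x\in\Om_h}|\boldsymbol{q}_h(x)|_p\,h^d\ \le\ \frac{dL}{2}\sum_{x\in\Om_h}|\eta_h^\ast(x)|\,h^d .
\]
Given this, $(\boldm_h^\ast+\boldsymbol{q}_h,\,0)$ is feasible for \eqref{duot}, because $\ddiv(\boldm_h^\ast+\boldsymbol{q}_h)=(\rho_h^0-\rho_h^1+\eta_h^\ast)+(-\eta_h^\ast)=\rho_h^0-\rho_h^1$ and the boundary data add. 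Its objective value is $f(\boldm_h^\ast+\boldsymbol{q}_h)\le f(\boldm_h^\ast)+f(\boldsymbol{q}_h)\le f(\boldm_h^\ast)+\tfrac{dL}{2}\sum_x|\eta_h^\ast(x)|h^d$, which is strictly below $f(\boldm_h^\ast)+\alpha\sum_x|\eta_h^\ast(x)|h^d$ whenever $\alpha>dL/2$ and $\eta_h^\ast\neq0$ --- contradicting optimality. Hence $\eta_h^\ast=0$.

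\emph{Construction of $\boldsymbol{q}_h$.} Choose any coupling $\pi\ge0$ on $\Om_h\times\Om_h$ with marginals $\sigma^+h^d$ and $\sigma^-h^d$ (for instance $\pi(a,b)=\sigma^+(a)\sigma^-(b)h^{2d}/m_0$), and route each mass $\pi(a,b)$ from $a$ to $b$ along the path that updates the coordinates one at a time. The $i$-th leg runs in direction $i$ over $|a_i-b_i|/h$ edges and carries flux of magnitude $\pi(a,b)/h^{d-1}$ there, hence contributes $\pi(a,b)\,|a_i-b_i|$ to $\sum_x|q_h^i(x)|h^d$. Superposing all pairs and using subadditivity of $|\cdot|$ together with $|v|_p\le|v|_1$,
\begin{align*}
\sum_{x}|\boldsymbol{q}_h(x)|_p h^d
&\le \sum_{x}|\boldsymbol{q}_h(x)|_1 h^d
= \sum_{i=1}^d\sum_{x}|q_h^i(x)|h^d
\le \sum_{a,b}\pi(a,b)\sum_{i=1}^d|a_i-b_i|\\
&= \sum_{a,b}\pi(a,b)\,\|a-b\|_1
\le dL\sum_{a,b}\pi(a,b)=dL\,m_0,
\end{align*}
since $\|a-b\|_1\le d\|a-b\|_\infty\le dL$ on $[0,L]^d$, and $dLm_0=\tfrac{dL}{2}\sum_x|\eta_h^\ast(x)|h^d$. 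All paths stay inside $[0,L]^d$, so $\boldsymbol{q}_h$ crosses no boundary edge, and $\ddiv(\boldsymbol{q}_h)=\sigma$ by construction. The main obstacle is precisely obtaining this estimate with the sharp factor $dL/2$: the decisive facts are that axis-aligned routing splits the flux cost additively over the $d$ coordinate directions and that the $\ell_1$-diameter of $[0,L]^d$ equals $dL$. (For $p>1$ one could instead route along straight segments and lower the threshold, but $dL/2$ is valid for every $p\in[1,\infty)$.)
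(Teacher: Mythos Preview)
Your proof is correct but proceeds by a \emph{primal} competitor argument, whereas the paper argues via the \emph{dual} optimality conditions. The paper introduces the Lagrange multiplier $\varphi_h^\ast$ and reads off the KKT conditions: from $0\in\partial|\boldm_h^\ast(x)|_p-\nabla_h\varphi_h^\ast(x)$ one gets $|\nabla_h\varphi_h^\ast(x)|_q\le 1$, so each discrete partial derivative of $\varphi_h^\ast$ is bounded by $1$ and $\varphi_h^\ast$ oscillates by at most the $\ell_1$-diameter $dL$ over the grid. If $\eta_h^\ast$ were nonzero, the condition $\varphi_h^\ast(x)\in\alpha\,\partial|\eta_h^\ast(x)|$ would force $\varphi_h^\ast=\alpha$ at some point and $\varphi_h^\ast=-\alpha$ at another (using $\sum_x\eta_h^\ast=0$), a swing of $2\alpha>dL$ --- contradiction. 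You instead stay entirely on the primal side: you explicitly build a flux $\boldsymbol{q}_h$ absorbing $-\eta_h^\ast$ at cost at most $\tfrac{dL}{2}\sum_x|\eta_h^\ast|h^d$, yielding a strictly better feasible pair. The two routes are Kantorovich-dual to one another: your flux bound is the statement that the discrete $W_1$ distance between $\sigma^+$ and $\sigma^-$ is at most $dL\cdot m_0$, while the paper's Lipschitz bound on $\varphi_h^\ast$ is exactly the dual characterization of that distance. Your argument is more elementary (no subdifferential calculus or strong duality) and constructive; the paper's is shorter once the saddle-point machinery is in hand and makes the role of $\varphi_h^\ast$ explicit, which feeds directly into the subsequent analysis of the primal--dual iterates.
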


\begin{proof}
    Different with before, here we will use the optimal conditions to show the result. Using the Lagrangian multiplier and omit the scaling $h^d$, the discrete UOT problem (\ref{duot1}) is equivalent to the following min-max problem:
	\begin{equation}\label{dminimax}
		\underset{\boldm_h,\eta_h}{\min} \underset{\varphi_h}{\max} ~~ \sum_{x \in \Om}\left(\lvert \boldm_h(x) \rvert_p + \alpha \lvert \eta_h(x) \rvert\right) + \langle \varphi_h, \ddiv (\boldm_h) - \eta_h - \rho_h \rangle,
	\end{equation}
  where the inner product $\langle \cdot, \cdot \rangle = \frac{1}{h^d}\langle \cdot, \cdot \rangle_h$.  Suppose $(\boldm_h^*, \eta_h^*,\varphi_h^*)$ is an optimal solution to (\ref{dminimax}), then by the first-order optimal conditions, for each point $x \in \Om_h$ we have that
\begin{equation}\label{first order condition}
	\left\{
	\begin{aligned}
		& 0 \in \partial \lvert \boldm_h^*(x) \rvert_p - \nabla_h(\varphi_h^*(x)), \\
		& 0 \in \alpha \partial \lvert(\eta_h^*(x) \rvert - \varphi_h^*(x), \\
		& 0 = \ddiv (\boldm_h^*(x)) - \eta_h^*(x) - \rho_h(x).
	\end{aligned}
	\right.
\end{equation}
Here  $\partial \lvert \boldm_h^*(x) \rvert_p$ and $\partial \lvert(\eta_h^*(x) \rvert$ represent the subgradients of  $\ell_p$-norm of vector $\boldm_h^*(x)$ and the absolute value of $\eta_h^*(x)$ respectively. From the second condition we obtain that for any point $(x_{i_1}, x_{i_2},\cdots, x_{i_d})\in (0,L]_h^d$,
\begin{equation*}
	-\alpha \leq \varphi_h^*(x_{i_1}, x_{i_2},\cdots, x_{i_d}) \leq \alpha.
\end{equation*}
To show $\eta_h^* = 0$, without loss of generality, we suppose that there exist $(x_{i_1}, x_{i_2},\cdots, x_{i_d})$ in $[0,L]^d_h$ such that $\eta_h^*(x_{i_1}, x_{i_2},\cdots, x_{i_d}) > 0$, then by the second condition we have 
\begin{equation*}
	\varphi_h^*(x_{i_1}, x_{i_2},\cdots, x_{i_d}) = \alpha.
\end{equation*}
Notice that for any $p \in [1, +\infty)$, the subgradient of $\partial \lvert \boldm_h^*(x) \rvert_p$ is defined as
\begin{equation*}
    \partial | \boldm_h^*(x) |_p = \left\{\textbf{v} \in \R^d: |\textbf{v}|_q \leq 1,~ \textbf{v}^\top \boldm_h^*(x) = | \boldm_h^*(x) |_p \right\},    
\end{equation*}
where $1 / p + 1 / q = 1$ and for $p = 1$, $\lvert \textbf{v} \rvert_q$ implies  $L_\infty$-norm of vector $\textbf{v}$. Then by the first condition in (\ref{first order condition}), we have that for each $x \in \Om_h$,
\begin{equation*}
    | \nabla \varphi_h^* (x) |_q \leq 1,
\end{equation*}
which indicates that the absolute value of each component of $\nabla \varphi_h^*(x)$ is also less than 1. More precisely, we have the following different situations:

$\bullet$ For $i_k \neq 0, 1$, at the point $(x_{-i_k}, x_{i_k})$ and $(x_{-i_k}, x_{i_k} - h)$ we can get that
\begin{equation}\left \{
	\begin{aligned}
		& \nabla_h (\varphi_h^*(x_{-i_k}, x_{i_k})) \in \partial \lvert(\boldm_{h}^*(x_{-i_k}, x_{i_k})\rvert_p, \\
		& \nabla_h (\varphi_h^*(x_{-i_k}, x_{i_k} - h)) \in \partial \lvert \boldm_{h}^*(x_{-i_k}, x_{i_k} - h) \rvert_p,
	\end{aligned}
\right.
\end{equation}
which leads to
\begin{equation}
	\left\{
	\begin{aligned}
		& \frac{\varphi_h^*(x_{-i_k}, x_{i_k} + h) - \varphi_h^*(x_{-i_k}, x_{i_k})}{h} \in \partial \lvert \boldm_{h,k}^*(x_{-i_k}, x_{i_k}) \rvert.\\
		& \frac{\varphi_h^*(x_{-i_k}, x_{i_k}) - \varphi_h^*(x_{-i_k}, x_{i_k} - h)}{h} \in \partial \lvert \boldm_{h,k}^*(x_{-i_k}, x_{i_k} - h) \rvert.
	\end{aligned}
	\right.
\end{equation}

$\bullet$ For $i_k = 1$, at the point $(x_{-i_k}, x_{i_k} - h)$ we can get that
\begin{equation*}
	\nabla_h (\varphi_h^*(x_{-i_k}, x_{i_k} - h)) \in \partial \lvert \boldm_{h}^*(x_{-i_k}, x_{i_k} - h) \rvert_p,
\end{equation*}
i.e. 
\begin{equation}
	\frac{\varphi_h^*(x_{-i_k}, x_{i_k}) - \varphi_h^*(x_{-i_k}, x_{i_k} - h)}{h} \in \partial \lvert \boldm_{h,k}^*(x_{-i_k}, x_{i_k} - h) \rvert.
\end{equation}
	
$\bullet$ For $i_k = 0$, at the point $(x_{-i_k}, x_{i_k})$ we can get that
\begin{equation*}
	\nabla_h(\varphi_h^*(x_{-i_k}, x_{i_k})) \in \partial \lvert \boldm_{h}^*(x_{-i_k}, x_{i_k}) \rvert_p,
\end{equation*}
i.e. 
\begin{equation}
	\frac{\varphi_h^*(x_{-i_k}, x_{i_k} + h) - \varphi_h^*(x_{-i_k}, x_{i_k})}{h} \in \partial \lvert\boldm_{h,k}^*(x_{-i_k}, x_{i_k}) \rvert.
\end{equation}
For all the cases we get that (if exists)
\begin{equation}
\left\{
	\begin{aligned}
	& -1 \leq \frac{\varphi_h^*(x_{-i_k}, x_{i_k}) - \varphi_h^*(x_{-i_k}, x_{i_k} - h)}{h} \leq 1, \quad k = 1,2,\cdots, d, \\
	& -1 \leq \frac{\varphi_h^*(x_{-i_k}, x_{i_k} + h) - \varphi_h^*(x_{-i_k}, x_{i_k})}{h} \leq 1, \quad k = 1,2,\cdots, d. \\
\end{aligned}
\right.
\end{equation}
Similarly,  we can continue to use the first condition at all the points $(x_{-i_k}, x_{i_k} - h)$ and $(x_{-i_k}, x_{i_k} + h)$ for $k = 1,2,\cdots, d$ (if exists) and get the restriction of $\varphi_h^*$ at any point on the meshgrid $[0,\, L]^d_h$:
\begin{equation}\label{second condition}
	\alpha - dL = \alpha - dNh \leq \varphi_h^*(x_{\tilde{i}_1}, x_{\tilde{i}_2},\cdots, x_{\tilde{i}_d}) \leq \alpha \quad 
	\text{for $\forall ~ (x_{\tilde{i}_1}, x_{\tilde{i}_2}, \cdots, x_{\tilde{i}_d}) \in [0,\, L]^d_h$}.
\end{equation}
On the other hand, as  $\rho_h^0$ and $\rho_h^1$  are equal mass, from the third condition we  get
\begin{equation*}
	\sum_{x \in \Omega_h} \eta_h^*(x) = \sum_{x \in \Omega_h}(A_h \boldm_h^*) - \sum_{x \in \Omega_h} \rho_h = 0. 
\end{equation*}
For $\eta_h^*(x_{i_1}, x_{i_2},\cdots, x_{i_d}) > 0$, there must exist another point $(x_{j_1}, x_{j_2}, \cdots, x_{j_d})$ in the space $\Om_h$ such that $\eta_h^*(x_{j_1}, x_{j_2}, \cdots, x_{j_d}) < 0$. Then by the second condition, we get $\varphi_h^*(x_{j_1}, x_{j_2}, \cdots, x_{j_d}) = -\alpha$. Then for $\alpha > dL / 2$, we have  
\begin{equation*}
	\varphi_h^*(x_{j_1}, x_{j_2}, \cdots, x_{j_d}) = -\alpha < \alpha - d L,
\end{equation*}
which is contradiction to the condition (\ref{second condition}). Therefore the optimal $\eta_h^* \equiv 0$ on the space $\Om_h$ and $\boldm_h^*$ is a solution to the OT problem. 
\end{proof}

\begin{remark}
    Note that the condition given in the theorem is only sufficient, in practice the exact threshold of $\eta_h^* \equiv 0$ tends to be smaller than $dL / 2$. Moreover though we only prove for $\Om = [0, L]^d$, the theorem holds for the general triangular case $\Om_h = [a_1, b_1] \times [a_2, b_2] \times \cdots [a_d, b_d]$ and the corresponding condition of $\alpha$ should be changed as $\alpha > d \underset{i}{\max}\{b_i - a_i\} / 2$.
\end{remark}

The above result indicate that the Beckmann formulation is advantageous as the usual discretization is asymptotic-preserving, which means that the convergence of UOT to OT can be preserved as $\alpha\to \infty$. This property is beneficial in the sense that when a numerical method is applied to the discretization problems, the numerical method can reduce to the one for OT as $\alpha\to\infty$ automatically.

In fact, as for the iterations \eqref{CP_ot} and \eqref{CP_uot} in the primal dual algorithm, with some restriction to the parameter $\alpha$, we can get the connection between them, which is stated as the following theorem:

\begin{theorem}
    Suppose in \eqref{CP_ot} and \eqref{CP_uot}, $\mu$ and $\tau$ are set the same which satisfy the convergence condition $\mu\tau \lVert [\ddiv, -I] \rVert^2 < 1$. Then there exists a constant $M > 0$ such that for all $\alpha > M$,  we have that $\eta_h^k \equiv 0$ on $\Om_h$ for $k$ large enough. Correspondingly, the subsequent iterates of UOT (\ref{CP_uot})  reduce to those of OT (\ref{CP_ot}).
\end{theorem}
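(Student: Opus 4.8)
The plan is to analyze the $\eta_h$-update in \eqref{CP_uot} directly. Since $f(\eta_h)=\sum_{x\in\Om_h}|\eta_h(x)|\,h^d$ splits over grid points and the weight $h^d$ cancels between the $\ell_1$ penalty and the quadratic proximal term, the step $\eta_h^{k+1}=\mathrm{Prox}_{\alpha\mu f}(\eta_h^k+\mu\varphi_h^k)$ is exactly componentwise soft-thresholding with radius $\alpha\mu$: with $z_h^k:=\eta_h^k+\mu\varphi_h^k$ one has $\eta_h^{k+1}(x)=\operatorname{sign}(z_h^k(x))\,(|z_h^k(x)|-\alpha\mu)_+$, so $\eta_h^{k+1}(x)=0$ whenever $|z_h^k(x)|\le\alpha\mu$. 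Hence it suffices to produce a constant $C$ that does \emph{not} depend on $\alpha$ (for all $\alpha$ beyond a fixed level) such that $\|\eta_h^k\|_\infty+\mu\|\varphi_h^k\|_\infty\le C$ for every $k$; then $M:=\max\{dL/2,\,C/\mu\}$ does the job, since for $\alpha>M$ the argument of the thresholding never reaches $\alpha\mu$, so $\eta_h^{k+1}\equiv0$ for all $k$, i.e. $\eta_h^k\equiv0$ for all $k\ge1$.

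To get this $\alpha$-uniform bound I would use that the Chambolle--Pock iterates are Fej\'er-monotone --- in the preconditioned metric of \cite{chambolle2011first}, which is positive definite precisely because $\mu\tau\|[\ddiv,-I]\|^2<1$ --- with respect to \emph{every} saddle point of the Lagrangian $L(\boldm_h,\eta_h,\varphi_h)$, so that $\|z^k-z^\star\|$ is bounded by $\|z^0-z^\star\|$ up to a factor depending only on $\mu,\tau,\|[\ddiv,-I]\|$, for a fixed initialization $z^0$ and an arbitrary saddle point $z^\star$. The point is to pick $z^\star$ explicitly and $\alpha$-independently. Let $\varphi_h^{\mathrm{OT}}$ be a dual optimizer of the discrete OT problem \eqref{dot}, normalized so that $\|\varphi_h^{\mathrm{OT}}\|_\infty\le dL/2$; this is possible because the OT optimality relation $\nabla_h\varphi_h^{\mathrm{OT}}(x)\in\partial|\boldm_h^{\mathrm{OT}}(x)|_p$ forces $|\nabla_h\varphi_h^{\mathrm{OT}}(x)|_\infty\le1$, so the oscillation of $\varphi_h^{\mathrm{OT}}$ over $\Om_h$ is at most $dNh=dL$, and subtracting a constant leaves the dual value unchanged since $\sum_{x}\rho_h(x)=0$. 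Repeating the optimality computation from the proof of Theorem \ref{discrete convergence}, one checks that for $\alpha\ge dL/2$ the triple $z^\star:=(\boldm_h^{\mathrm{OT}},0,\varphi_h^{\mathrm{OT}})$ is a saddle point of the UOT Lagrangian --- the three conditions being $\nabla_h\varphi_h^{\mathrm{OT}}(x)\in\partial|\boldm_h^{\mathrm{OT}}(x)|_p$, $\varphi_h^{\mathrm{OT}}(x)\in[-\alpha,\alpha]=\alpha\,\partial|0|$, and $\ddiv\boldm_h^{\mathrm{OT}}-0-\rho_h=0$ --- and, crucially, $z^\star$ does not depend on $\alpha$. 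Fej\'er monotonicity against this $z^\star$ bounds $\|\eta_h^k\|_{h,2}$ and $\|\varphi_h^k\|_{h,2}$ uniformly in $k$ and in $\alpha\ge dL/2$, and passing to the sup norm via $\|v\|_\infty\le h^{-d/2}\|v\|_{h,2}$ yields the required $C$.

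With $M$ as above and $\alpha>M$, the two steps combine to give $|z_h^k(x)|\le C<\alpha\mu$ for all $k$ and all $x\in\Om_h$, hence $\eta_h^{k+1}\equiv0$ for every $k\ge0$, i.e. $\eta_h^k\equiv0$ for all $k\ge1$. Consequently $\tilde{\eta}_h^{k+1}=2\eta_h^{k+1}-\eta_h^k=0$ for $k\ge1$, the $\eta$-line of \eqref{CP_uot} becomes inert, and the surviving updates $\boldm_h^{k+1}=\mathrm{Prox}_{\mu f}(\boldm_h^k-\mu(\ddiv)^*\varphi_h^k)$, $\tilde{\boldm}_h^{k+1}=2\boldm_h^{k+1}-\boldm_h^k$, $\varphi_h^{k+1}=\varphi_h^k+\tau(\ddiv\tilde{\boldm}_h^{k+1}-\rho_h)$ coincide verbatim with \eqref{CP_ot} for the same $\mu,\tau$; since $\mu\tau\|[\ddiv,-I]\|^2<1$ implies $\mu\tau\|\ddiv\|^2<1$, this is precisely the convergent OT scheme. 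I expect the genuine obstacle to be the $\alpha$-uniform control of $\|\varphi_h^k\|_\infty$: the iterates converge to \emph{some} UOT dual optimizer, which a priori may hug the levels $\pm\alpha$ (the constraint $|\nabla_h\varphi|_q\le1$ controls only its oscillation, not its sup norm), so the bound cannot be read off the limit and must be obtained by comparison with a fixed, explicitly bounded saddle point as above.
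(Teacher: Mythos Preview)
Your proposal is correct and takes a genuinely different --- and in one respect more careful --- route than the paper. Both arguments hinge on the same observation: the $\eta_h$-update in \eqref{CP_uot} is componentwise soft-thresholding at level $\alpha\mu$, so $\eta_h^{k+1}\equiv0$ as soon as $\|\eta_h^k+\mu\varphi_h^k\|_\infty\le\alpha\mu$. They diverge on how to control $\|\varphi_h^k\|_\infty$. The paper simply invokes convergence of the Chambolle--Pock iterates to a UOT saddle point $(\boldm_h^*,0,\varphi_h^*)$, notes that by Theorem~\ref{discrete convergence} this is an OT saddle point whenever $\alpha>dL/2$, sets $M:=\max\{dL/2,\|\varphi_h^*\|_\infty\}$, and uses the eventual $\epsilon$-closeness of $(\eta_h^k,\varphi_h^k)$ to $(0,\varphi_h^*)$ to force the threshold. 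You instead exhibit a single $\alpha$-free saddle point $(\boldm_h^{\mathrm{OT}},0,\varphi_h^{\mathrm{OT}})$ with the normalization $\|\varphi_h^{\mathrm{OT}}\|_\infty\le dL/2$, and invoke Fej\'er monotonicity of the iteration in the preconditioned metric against \emph{that} fixed saddle point, obtaining a bound on $\|\eta_h^k\|_{h,2}+\|\varphi_h^k\|_{h,2}$ valid for \emph{every} $k\ge0$ and every $\alpha\ge dL/2$. This buys you two things the paper's argument does not make explicit: the constant $M$ is manifestly $\alpha$-independent (the paper's $\varphi_h^*$ is the limit of the UOT iterates, which a priori could vary with $\alpha$ since OT dual optimizers are only determined up to an additive constant --- exactly the point you flag in your last paragraph), and the conclusion is sharper, yielding $\eta_h^k\equiv0$ for \emph{all} $k\ge1$ rather than only for $k$ large. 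The price is that you need the nonexpansiveness of the primal--dual map, not merely its convergence.
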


\begin{proof}
    By the equivalence of norms in finite dimensional space and the convergence analysis in \cite{2009Proximal} it leads to 
    \begin{equation*}
        \lim_{k \rightarrow +\infty} \sum_{x\in\Om_h}\left( \lvert \eta_h^k(x) - \eta_h^*(x) \rvert + \lvert \varphi_h^k(x) - \varphi_h^*(x) \rvert \right) = 0,
    \end{equation*}
    where $(\boldm_h^*, \eta_h^*, \varphi_h^*)$ is a group of optimal solution to the UOT problem \eqref{duot1}. Therefore combining Theorem \ref{discrete convergence}, for any $\alpha > \frac{d L}{2}$ we have $\eta_h^* \equiv 0$ and $(\boldm_h^*, \varphi_h^*)$ is a pair of optimal solution to the OT problem \eqref{dot}. Correspondingly,
    \begin{equation}
        \lim_{k \rightarrow +\infty} \sum_{x\in\Om_h}\left( \lvert \eta_h^k(x) \rvert + \lvert \varphi_h^k(x) - \varphi_h^*(x) \rvert \right) = 0,
    \end{equation}
    which indicates that
    \begin{equation}
        \lim_{k \rightarrow +\infty} \lVert \eta_h^k \rVert_{h,\infty} + \lVert  \varphi_h^k - \varphi_h^* \rVert_{h,\infty} = 0,
    \end{equation}
    where the norm $\lVert u \rVert_{h,\infty}$ is defined as
    \[
        \lVert u \rVert_{h,\infty} = \max_{x\in\Om_h}\lvert u(x) \rvert, \quad \text{for $\forall u \in \R^{(N+1)^d}$}.
    \]
    Equivalently, one has that for any $\epsilon > 0$, there exists an integer $N > 0$ such that for any $k > N$,
    \begin{equation}\label{eps}
        \lVert \eta_h^{k - 1} \rVert_{h,\infty} + \lVert \varphi_h^{k - 1} - \varphi_h^* \rVert_{h,\infty} < \epsilon.
    \end{equation}
    Note that for the step $\eta_h^{k}$ in \eqref{CP_uot}, at each point $x \in \Om_h$ we have
    \begin{equation}
    \begin{aligned}
        \eta_h^{k}(x) & = \operatorname{Prox}_{\alpha\mu f}(\eta_h^{k - 1}(x) + \mu \varphi_h^{k - 1}(x)) = \underset{\eta_h}{\arg\min}~ \lvert\eta_h\rvert + \frac{1}{2\alpha\mu} \| \eta_h - (\eta_h^{k - 1}(x) + \mu \varphi_h^{k - 1}(x)) \|_{h,2}^2 \\
        & = \operatorname{sign}(\eta_h^{k - 1}(x) + \mu \varphi_h^{k - 1}(x))\cdot \max\left\{\lvert \eta_h^{k - 1}(x) + \mu \varphi_h^{k - 1}(x) \rvert - \alpha\mu , 0\right\}.
    \end{aligned}
    \end{equation}
    Define 
    \[M:= \max\left\{\frac{d L}{2}, \lVert \varphi_h^* \rVert_{h,\infty}\right\}
    \] and for any $\alpha > M$, let $\epsilon \leq \min\left\{1, \mu\right\} (\alpha - M)$ in \eqref{eps}. Then for any $k > N$ we obtain that
    \begin{equation}
    \begin{aligned}
        \lvert \eta_h^{k - 1}(x) + \mu \varphi_h^{k - 1}(x) \rvert & \leq \lVert \eta_h^{k-1} \rVert_{h,\infty} + \mu \lVert \varphi_h^{k-1} \rVert_{h,\infty} \\ 
        & \leq \lVert \eta_h^{k-1} \rVert_{h,\infty} + \mu\lVert \varphi_h^{k-1} - \varphi_h^* \rVert_{h,\infty} + \mu\lVert \varphi_h^* \rVert_{h,\infty}\\
        & \leq \max \left\{1, \mu\right\}(\lVert \eta_h^{k-1} \rVert_{h,\infty} + \lVert \varphi_h^{k-1} - \varphi_h^* \rVert_{h,\infty}) + \mu\lVert \varphi_h^* \rVert_{h,\infty}\\
        & \leq \max\left\{1, \mu\right\}\epsilon + \mu\lVert \varphi_h^* \rVert_{h,\infty} \leq \max\left\{1, \mu\right\}\epsilon + \mu M \leq \alpha\mu
    \end{aligned}
    \end{equation}
    which indicates that $\eta_h^{k}(x) \equiv 0$ for any $x \in \Om_h$. And correspondingly, the iterates of UOT \eqref{CP_uot} reduce to those of OT \eqref{CP_ot} for $k$ large enough.
\end{proof}

\section{Numerical Experiments}\label{sc:numerical experiments}
In this section we use two examples: shape deformation and color transfer problems  to illustrate the application of UOT and OT problems using the primal-dual hybrid algorithm discussed above.

\textbf{I. Shape Deformation~} The first  example is used to illustrate the convergence of UOT problem to OT problem. Particularly, we take $d = 2$, $\Om_h = [0,1]^2$ and the discrete distributions $\rho_h^0$, $\rho_h^1$ are the silhouettes of  cat images of same mass \cite{li2018parallel}, as shown in  Figure \ref{rho0} and Figure \ref{rho1}.   The size of both images is $256 \times 256$ and the algorithm is terminated when the primal-dual gap $R_h^k < 10^{-6}$ or the iteration number reaches $300000$. 
\begin{figure}[!h]
\centering
\begin{minipage}{0.47\linewidth}
    \centering
    \includegraphics[width = 1.0\textwidth]{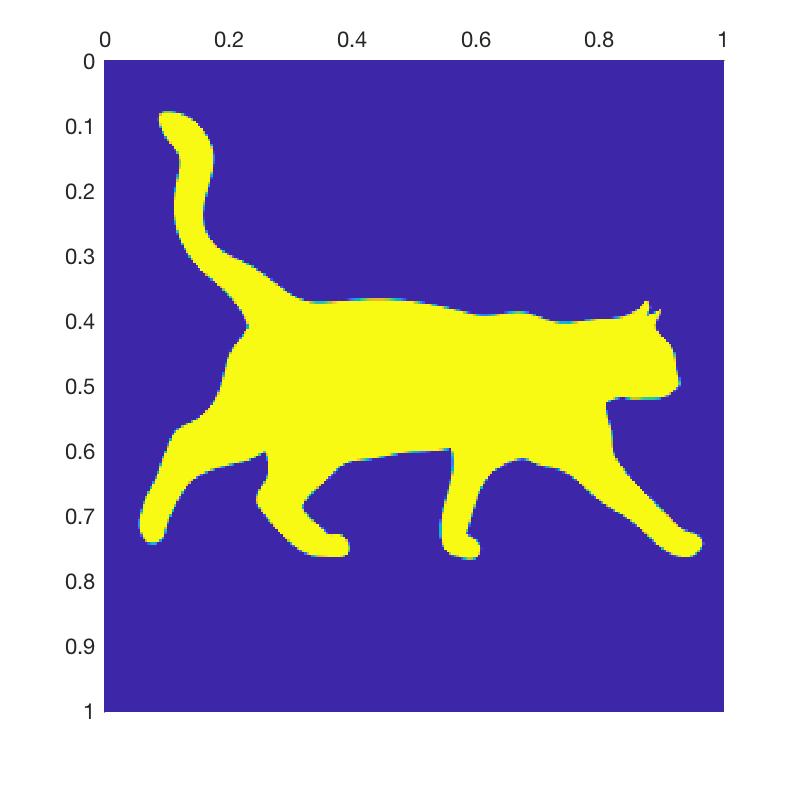}
    \caption{$\rho_h^0$}
    \label{rho0}
\end{minipage}
\hfill
\begin{minipage}{0.47\linewidth}
    \centering
    \includegraphics[width = 1.0\textwidth]{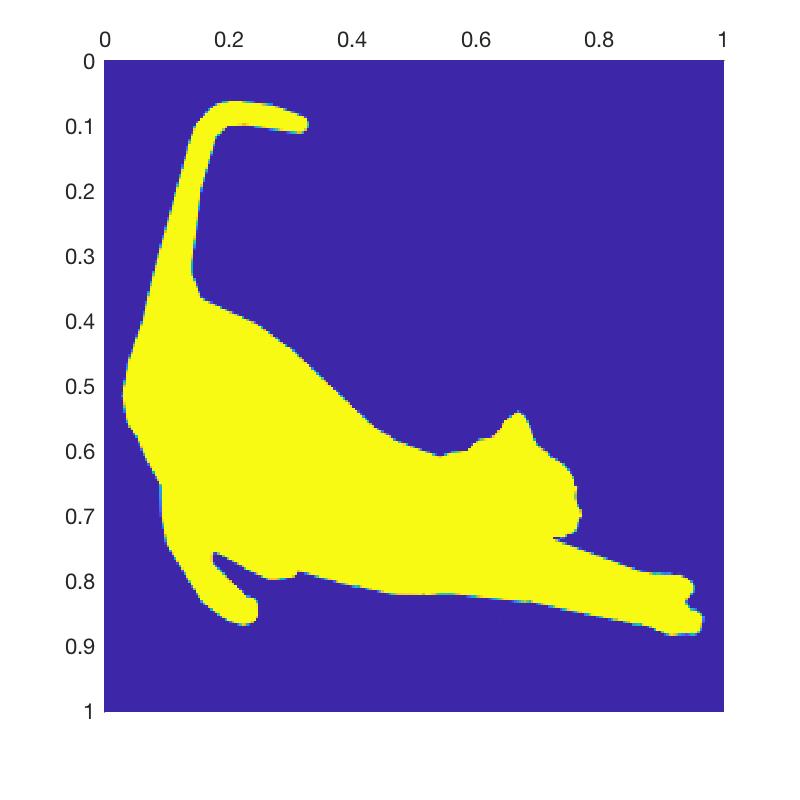}
    \caption{$\rho_h^1$}
    \label{rho1}
\end{minipage}
\end{figure}
\begin{itemize}
    \item \textbf{Convergence with $\alpha$.} ~We tune the value of $\alpha$ from $0.01$ to $1$ and get an optimal solution $(\boldm_\alpha, \eta_\alpha)$ for each $\alpha$ in UOT problem. Also we can obtain an optimal solution $\boldm_{\text{ot}}$ of the OT problem. As Theorem \ref{discrete convergence} states, the solution of the discrete UOT problem (\ref{duot1})  converges to the solution of discrete OT problem (\ref{dot}) as $\alpha$ gets larger, i.e  $\boldm_{\alpha} \rightarrow \boldm_{\text{ot}}$ and $\eta_{\alpha} \rightarrow 0$.  Figure \ref{difference figure} shows the difference  $m_{dif} = \lvert \boldm_{\alpha} - \boldm_{\text{ot}} \rvert_{h,2}$ and $\eta_{\text{dif}} = \lvert \eta_{\alpha} \rvert_{h,2} $ with different $\alpha$. For $\alpha = 0.6$, $m_{\text{dif}} = 3.2011 \times 10^{-8}$ and $\eta_{\text{dif}} = 0$, and when $\alpha$ is larger than some constant between $0.6$ and $0.7$, $\eta_{\text{dif}} = 0$ and $m_{\text{dif}} \sim 10^{-15}$, which is consistent with the results proved in Theorem \ref{discrete convergence}.
\begin{figure}[htbp]
\centering
\begin{minipage}{0.45\linewidth}
    \centering
    \includegraphics[width = 1.0\textwidth]{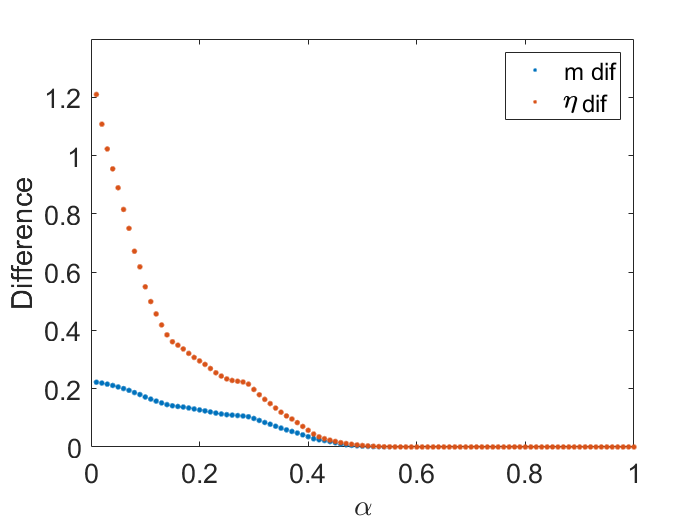}
\end{minipage}
\hfill
\begin{minipage}{0.45\linewidth}
    \centering
    \includegraphics[width = 1.0\textwidth]{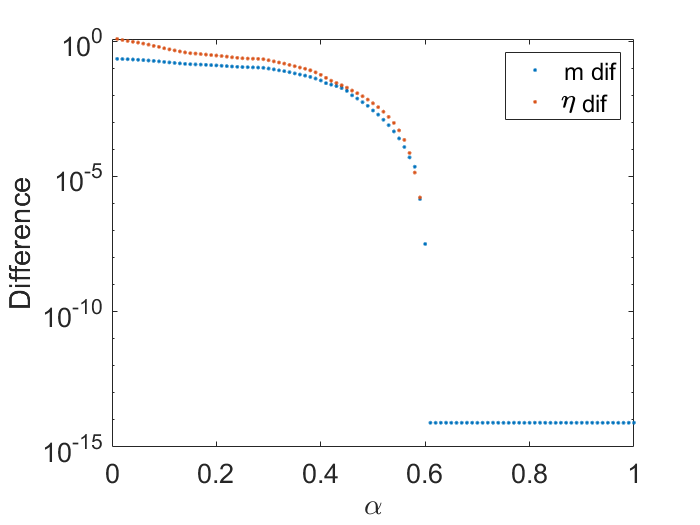}
\end{minipage}
\caption{The  figure shows the difference in  normal $y$-axis (left) and  $\log$ $y$-axis (right). Here we choose $N = 256$. It can be seen that both $m_{\text{dif}}$ and $\eta_{\text{dif}}$ go to 0 as $\alpha$ gets sufficient large. In particular when $\alpha = 0.6$, $m_{\text{dif}} = 3.2011 \times 10^{-8}$ and $\eta_{\text{dif}} = 0$, and when $\alpha$ is larger than some constant between $0.6$ and $0.7$, $\eta_{\text{dif}} = 0$ and $m_{\text{dif}} \sim 10^{-15}$, which is consistent to the results proved in Theorem \ref{discrete convergence}.
}
\label{difference figure}
\end{figure}

\item \textbf{Independence to the size of meshgrid $h$.}~ Notice that in Theorem \ref{discrete convergence}, the convergence of UOT to OT is unrelated to the choice of grid size $h$ for $\alpha > dL / 2$ the optimal solution of UOT is always convergent to OT. We note that here $dL/2=1$. In the experiments we choose four different sizes of images with $128 \times 128$, $256 \times 256$, $512\times 512$, $1024\times1024$ and  the length is unified to $1$, i.e.  $Nh = L = 1$. For different $\alpha = 0.1, 0.4, 0.6, 0.63, 1.0$, the results are listed in Table \ref{tb1}.
\begin{table}[htbp]
	\centering
	\begin{tabular}{|c|c|c|c|c|}
		\hline
		 & $N = 128$ & $N = 256$ & $N = 512$ & $N = 1024$ \\
		 \hline
		 $\alpha = 0.1$ & 0.3621 & 0.3583 & 0.3570 &  0.3563 \\
		 $\alpha = 0.4$ & 0.0296 & 0.0288 & 0.0285 & 0.0283 \\
		 $\alpha = 0.6$ & 0.1301 $\times 10^{-4}$ & 0.1048 $\times 10^{-4}$ & 0.1369 $\times 10^{-4}$ & 0.1515 $\times 10^{-4}$ \\
		 $\alpha = 0.63$ & 0.1248 $\times 10^{-7}$ & 0 & 0 & 0\\
		 $\alpha = 1.0$ & 0 & 0 & 0 & 0 \\
		 \hline
	\end{tabular}
\caption{The value of $\lvert\eta_{\text{dif}}^*\rvert_{h,2}$ with different $\alpha$ and different $N$(or $h$).}
\label{tb1}
\end{table}
As we can see, for any fixed $\alpha$  $\lvert\eta_{\text{dif}}^*\rvert_{h,2}$ remains  almost the same and when $\alpha$ is larger than $0.63$, which is small than $dL/2=1$, all $\lvert\eta_{\text{dif}}^*\rvert_{h,2}$ equal to $0$ for different $N$.

\end{itemize}

\textbf{II. Color Transfer~} Besides the transformation between shape images, we also provide an application of UOT model for color transfer between three-channel images.

The given target and source images are first transferred to the CIE-lab space $(l,a,b)$, where the $l$-space represents the luminance of the image,   and $a$ and $b$ are chromaticity coordinates. We fix the $l$-space as it is related to the lightness and normalize $a$ and $b$-components into $[0, 1]$. Then both $a$ and $b$-components are divided into $32$ intervals and the  color histograms is obtained on these intervals respectively.

For both $a$ and $b$-components, we solve both UOT and OT problems to get the optimal flux $m_h^*$. Then we can compute the velocity field with a value in $a$-space and $b$-space through the connection in Lemma \ref{prop:uot1} as follows:
\begin{equation}
	v(t;x) = \frac{m_h^*(x)}{t\rho_1(x) + (1 - t)\rho_0(x)} \triangleq \frac{m_h^*(x)}{\mu_t(x)},
\end{equation}
where $t$ is virtual time and $x \in [0, 1]$ is the partition position. In practice,  the supports of $\rho_0$ and $\rho_1$ are not always the same. To ensure the existence of $v(t;x)$ and eliminate the singularity of $v(t;x)$  caused by small $\mu_t$, we add a small perturbation $\epsilon$ on $\mu_t$, i.e.
\begin{equation}
	v_\epsilon(t;x) = \frac{m_h^*(x)}{t\rho_1(x) + (1 - t)\rho_0(x) + \epsilon} = \frac{m_h^*(x)}{\mu_t(x) + \epsilon}.
\end{equation}

For each $x_0$ sampled from $\rho_0$, the corresponding trasported $x_1$ in $\rho_1$ can be obtained by solving the following ODE:
\begin{equation}
	\left\{
		\begin{aligned}
			& \dot{x}(t) = v_\epsilon(t;x(t)), \quad t \in(0, 1), \\
			& x(0) = x_0,
		\end{aligned}
	\right.
\end{equation}  
and $x(1)$ is the target distribution.

By one-step forward Euler method, we obtain the formula:
\begin{equation}
	x(t_k) = x(t_{k - 1}) + \frac{m_h^*(x(t_{k - 1}))}{\mu_{t_{k-1}}(x(t_{k-1})) + \epsilon}, ~ \text{for $k = 1,2,\cdots, N$}, 
\end{equation}
where $t_k = k\Delta t$ and $N\Delta t  = 1$. 

Following this procedure, we transport the $a$ and $b$ components of every pixel in the target image to the corresponding one in the new image. Figure \ref{fig:colors} show the results of color transfer for three pair of images of size $512*512$.

\begin{figure}[H]
	\centering
	\begin{minipage}{0.15\linewidth}
		\centering
		\includegraphics[width=1.0\textwidth]{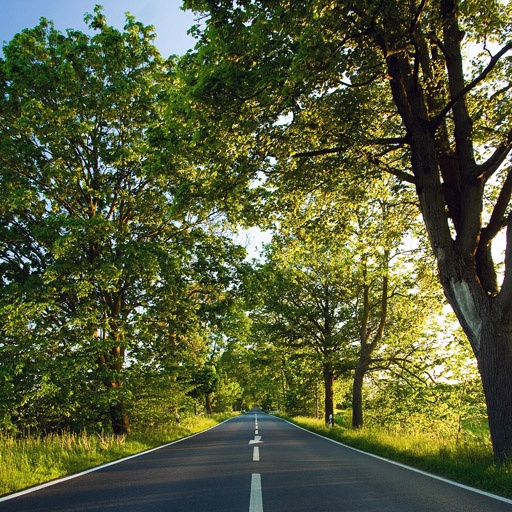}
		
	\end{minipage}
	\begin{minipage}{0.15\linewidth}
		\centering
		\includegraphics[width=1.0\textwidth]{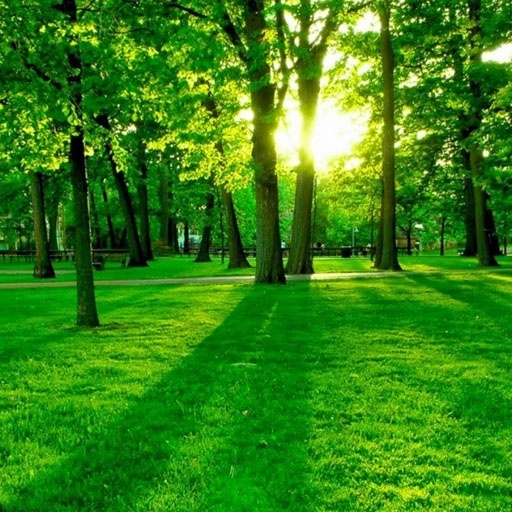}
		
	\end{minipage}
	\begin{minipage}{0.15\linewidth}
		\centering
		\includegraphics[width=1.0\textwidth]{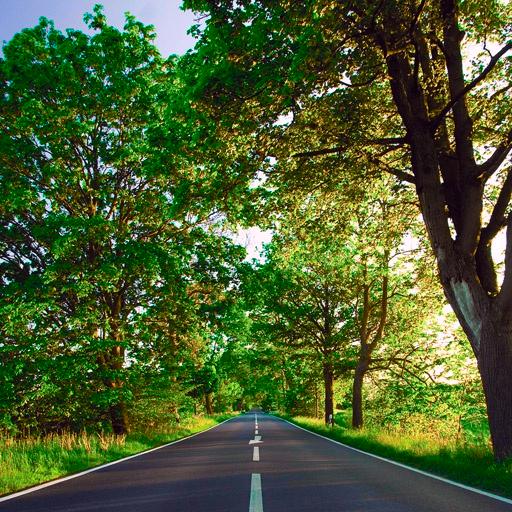}
	\end{minipage}
	\begin{minipage}{0.15\linewidth}
		\centering
		\includegraphics[width=1.0\textwidth]{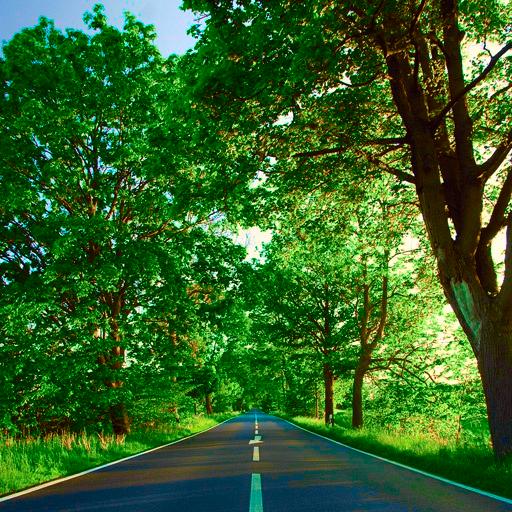}
	\end{minipage}
	\begin{minipage}{0.15\linewidth}
		\centering
		\includegraphics[width=1.0\textwidth]{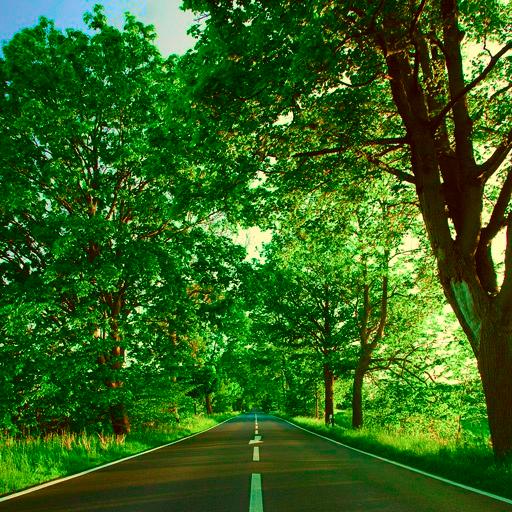}
	\end{minipage}
	\begin{minipage}{0.15\linewidth}
		\centering
		\includegraphics[width=1.0\textwidth]{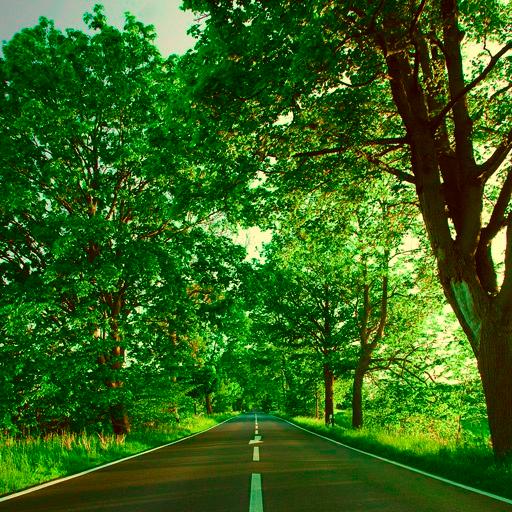}
	\end{minipage}
	\hfill
	\begin{minipage}{0.15\linewidth}
		\centering
		\includegraphics[width=1.0\textwidth]{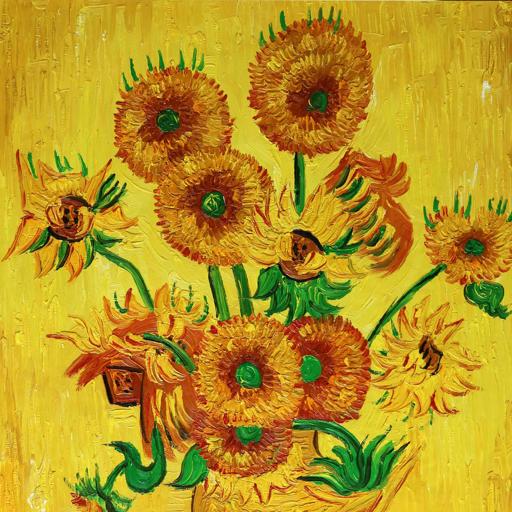}
	\end{minipage}
	\begin{minipage}{0.15\linewidth}
		\centering
		\includegraphics[width=1.0\textwidth]{figures/cf2.jpg}
	\end{minipage}
	\begin{minipage}{0.15\linewidth}
		\centering
		\includegraphics[width=1.0\textwidth]{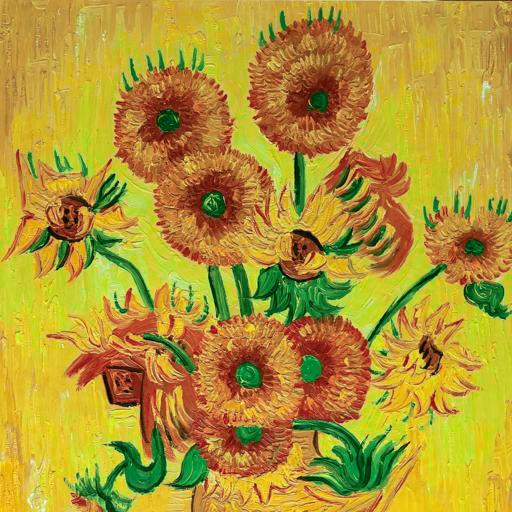}
	\end{minipage}
	\begin{minipage}{0.15\linewidth}
		\centering
		\includegraphics[width=1.0\textwidth]{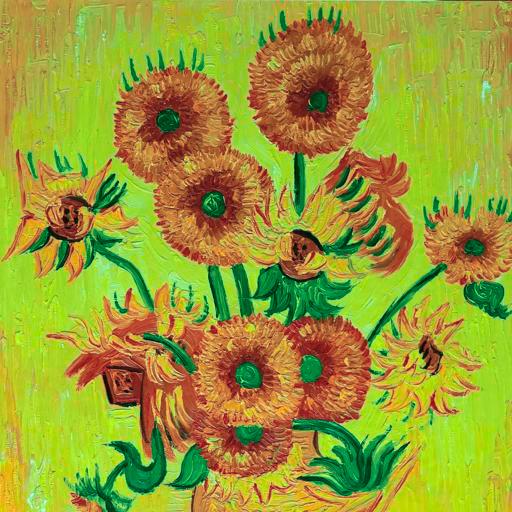}
	\end{minipage}
	\begin{minipage}{0.15\linewidth}
		\centering
		\includegraphics[width=1.0\textwidth]{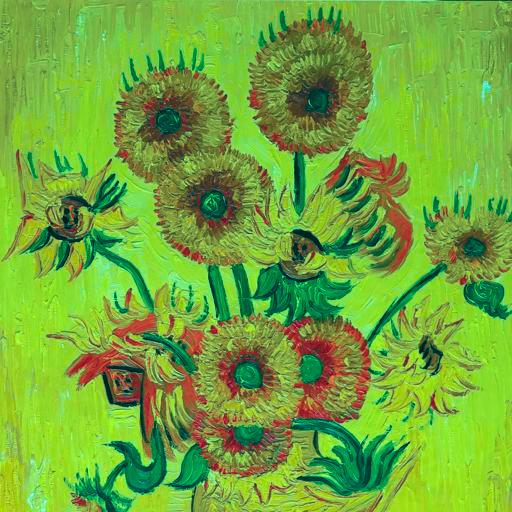}
	\end{minipage}
	\begin{minipage}{0.15\linewidth}
		\centering
		\includegraphics[width=1.0\textwidth]{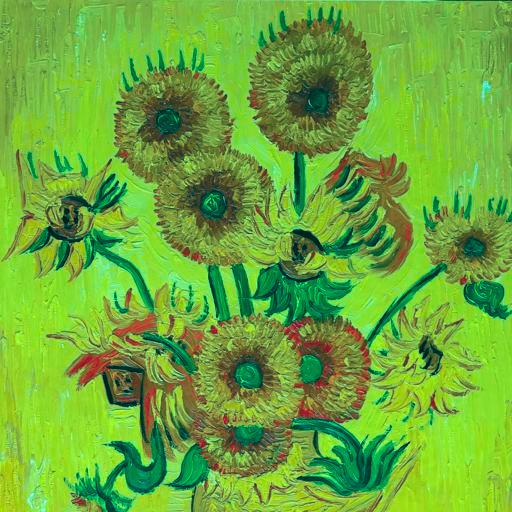}
	\end{minipage}
	\hfill
	\begin{minipage}{0.15\linewidth}
		\centering
		\includegraphics[width=1.0\textwidth]{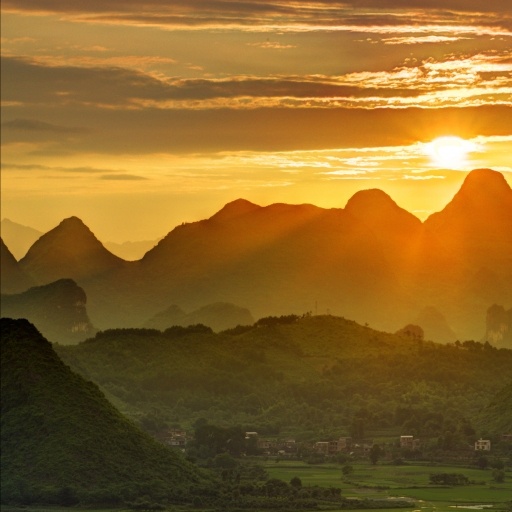}
		
	\end{minipage}
	\begin{minipage}{0.15\linewidth}
		\centering
		\includegraphics[width=1.0\textwidth]{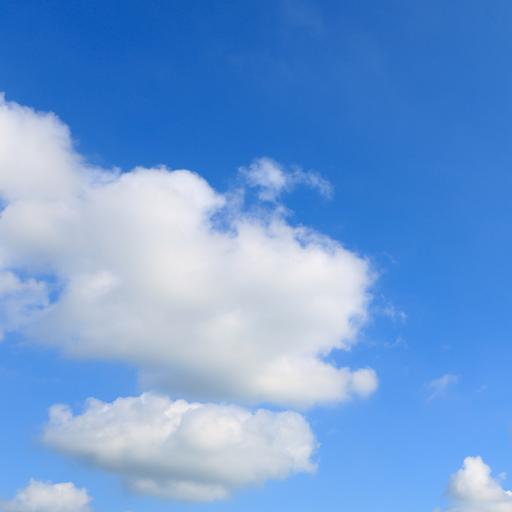}
		
	\end{minipage}
	\begin{minipage}{0.15\linewidth}
		\centering
		\includegraphics[width=1.0\textwidth]{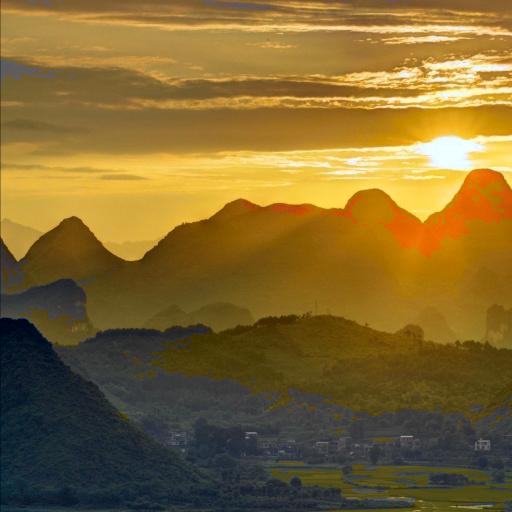}
	\end{minipage}
	\begin{minipage}{0.15\linewidth}
		\centering
		\includegraphics[width=1.0\textwidth]{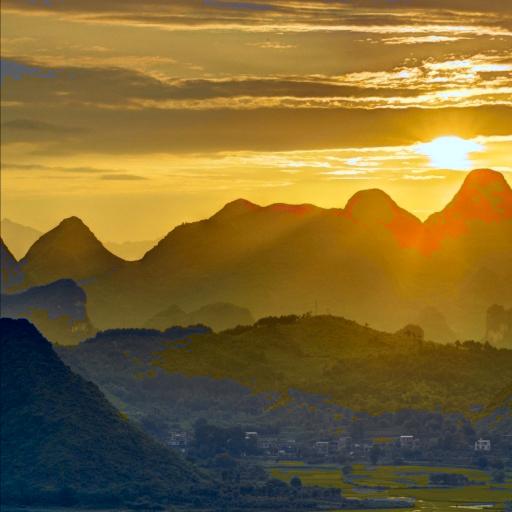}
	\end{minipage}
	\begin{minipage}{0.15\linewidth}
		\centering
		\includegraphics[width=1.0\textwidth]{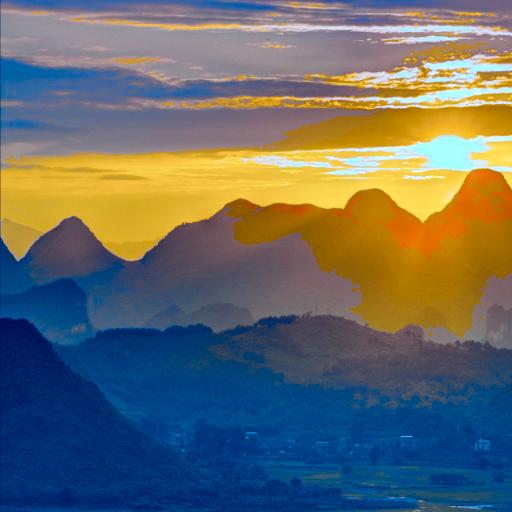}
	\end{minipage}
	\begin{minipage}{0.15\linewidth}
		\centering
		\includegraphics[width=1.0\textwidth]{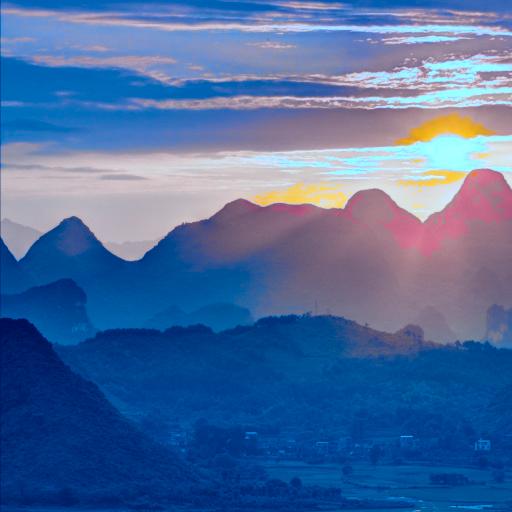}
	\end{minipage}
	\caption{In each row, the first two columns are the target and source images respectively. The last four  are the results by color transferring with different $\alpha = 0.05, 0.1, 0.2, 0.5$ respectively. As the value of $\alpha$ increases, the color distribution of target image are  partially transferred.}
\end{figure}
\label{fig:colors}
\section{Conclusions}
In this paper, we established the convergence from the Beckmann formulation of UOT to that of OT in both continuous and discrete  settings.  We proposed to apply a primal-dual hybrid algorithm for solving the UOT problem, and provide a lower bound for the regularization parameter $\alpha$ of UOT for  its solution reducing to the one of OT problem. Finally, we provide some applications of the UOT model and illustrate the convergence numerically.

\normalem
\bibliographystyle{unsrt}
\bibliography{ref.bib}

\newpage
\section*{Appendix}

\appendix
\section{Proofs of $\Gm$-convergence Lemmas and Theorems}\label{apdx}
\begin{lemma}[Proposition \ref{connection}]
    It holds that
    \begin{gather}
    \begin{split}
    & \Gm_{\seq}(\N^+, X^-)\lim_n f_n =\Gamma\hbox{-}\limsup_{n\to\infty} f_n ,\\
    & \Gm_{\seq}(\N^-, X^-)\lim_n f_n = \Gamma\hbox{-}\liminf_{n\to\infty} f_n.
    \end{split}
    \end{gather}
    Consequently, if $f:=\Gm_{\seq}(\N, X^-)f_n$ exists, then 
    $f_n$ $\Gm$-converges to $f$.
\end{lemma}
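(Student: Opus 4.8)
The plan is to establish the two displayed identities pointwise at an arbitrary $x_0\in X$ (both sides are defined pointwise), and then read off the last assertion for free: if $f:=\Gm_{\seq}(\N,X^-)\lim_n f_n$ exists, this means by definition that $\Gm_{\seq}(\N^+,X^-)\lim_n f_n=\Gm_{\seq}(\N^-,X^-)\lim_n f_n=f$, and the two identities rewrite this as $\Gamma\hbox{-}\limsup_n f_n=\Gamma\hbox{-}\liminf_n f_n=f$, which is exactly the defining property in Definition \ref{def:Gmconv}. Unwinding the operators, $\Gm_{\seq}(\N^+,X^-)\lim_n f_n(x_0)=\inf_{\{x^n\}\in\cS(x_0)}\limsup_n f_n(x^n)$ and $\Gm_{\seq}(\N^-,X^-)\lim_n f_n(x_0)=\inf_{\{x^n\}\in\cS(x_0)}\liminf_n f_n(x^n)$, so each identity is an equality between an infimum over convergent sequences and a supremum over neighborhoods, which I would prove by two inequalities.

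First I would do the easy inequality, that the supremum over neighborhoods is at most the infimum over convergent sequences. Fix any $\{x^n\}\in\cS(x_0)$ and any neighborhood $N$ of $x_0$; since $x^n\to x_0$ we have $x^n\in N$ for all large $n$, hence $\inf_{y\in N}f_n(y)\le f_n(x^n)$ eventually. Applying $\limsup_n$ (respectively $\liminf_n$), then $\sup$ over $N$, then $\inf$ over $\{x^n\}$ gives $\sup_N\limsup_n\inf_{y\in N}f_n(y)\le\inf_{\{x^n\}}\limsup_n f_n(x^n)$ and its $\liminf$ analogue. This half uses nothing about the topology of $X$.

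For the reverse inequality I would build a diagonal sequence, and this is where a countable neighborhood base at $x_0$ is needed — it is available in all spaces used in this paper (which are metrizable after restriction to bounded sets), and in general one works under the standing hypotheses of \cite{buttazzo1982gamma}. Let $N_1\supseteq N_2\supseteq\cdots$ be such a base and set $L^+:=\sup_N\limsup_n\inf_{y\in N}f_n(y)$; if $L^+=+\infty$ there is nothing to prove, so assume $L^+<+\infty$ and fix $\epsilon>0$. For each $k$ one has $\limsup_n\inf_{y\in N_k}f_n(y)\le L^+$, so there is a strictly increasing sequence $(n_k)$ with $\inf_{y\in N_k}f_n(y)<L^++\epsilon$ for all $n\ge n_k$; choosing $x^n\in N_k$ with $f_n(x^n)<L^++\epsilon$ whenever $n_k\le n<n_{k+1}$ (and $x^n=x_0$ for $n<n_1$) produces a sequence with $x^n\to x_0$ and $\limsup_n f_n(x^n)\le L^++\epsilon$; letting $\epsilon\downarrow0$ finishes the first identity. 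For the $\liminf$ identity the construction is the same, except that $\liminf_n\inf_{y\in N_k}f_n(y)\le L^-:=\sup_N\liminf_n\inf_{y\in N}f_n(y)$ only provides infinitely many indices $n$ with $\inf_{y\in N_k}f_n(y)<L^-+\epsilon$, so I would pick one such index $n_k$ for each $k$ (strictly increasing), set $x^{n_k}\in N_k$ with $f_{n_k}(x^{n_k})<L^-+\epsilon$ and $x^n=x_0$ otherwise; this still converges to $x_0$ and satisfies $\liminf_n f_n(x^n)\le\liminf_k f_{n_k}(x^{n_k})\le L^-+\epsilon$.

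The hard part is exactly this reverse inequality: it is where the diagonalization — hence first countability at $x_0$ — is essential, and it carries the only nonroutine bookkeeping, namely allowing $f_n$ to take the value $+\infty$ and the infima over $N_k$ to be unattained, both of which are absorbed into the auxiliary $\epsilon$. Everything else is formal juggling of $\sup/\inf$ with $\limsup/\liminf$.
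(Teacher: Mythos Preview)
Your proof is correct and follows essentially the same two-inequality strategy as the paper: the easy direction uses that any convergent sequence eventually lies in each neighborhood, and the hard direction builds a sequence by diagonalizing over a countable shrinking neighborhood base (the paper simply writes $B_n=N(x,1/n)$, implicitly assuming the same first-countability you flag). Your diagonal construction is somewhat more explicit and carefully distinguishes the $\limsup$ and $\liminf$ cases, whereas the paper treats only the $\limsup$ identity in detail and picks $x^n\in B_n$ near $\inf_{B_n}f_n$ directly for each $n$.
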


\begin{proof}
    We  give the proof of the first equality
    \begin{equation}
        \Gm_{\seq}(\N^+, X^-)\lim_n f_n=\Gamma\hbox{-}\limsup_{n\to\infty} f_n
    \end{equation}
    while the second one can be proved similarly. Equivalently, our goal can be rewritten as
    \begin{equation}
        \inf_{x^n \rightarrow x} \limsup_{n\rightarrow \infty} f_n(x_n) = \sup_{N_x}  \limsup_{n\rightarrow \infty} \inf_{y \in N_x} f_n(y),
    \end{equation}
    where $N_x$ ranges over all the neighborhoods. Then for $\forall \epsilon > 0$, there exists a neighborhood $N_x$ such that
    \begin{equation}
        \nlimsup \inf_{y\in N_x} f_n(y) > \Gamma\hbox{-}\limsup_{n\to\infty} f_n - \epsilon.
    \end{equation}
    On the other hand, for $\forall \epsilon > 0$ there exists a sequence $(x^n)$ converging to $x$ such that
    \begin{equation}
        \nlimsup f_n(x_n) < \Gm_{\seq}(\N^+, X^-) + \epsilon.
    \end{equation}
    Since for any $n$ large enough we have $x_n \in N_x$, then we can get the following relationship
    \begin{equation}
        f_n(x_n) \geq \inf_{y\in N_x} f_n(y)
    \end{equation}
    for all $n$ large enough. Then take $\limsup$ from both sides and we obtain that
    \begin{equation}
        \nlimsup f_n(x_n) \geq \nlimsup \inf_{y \in N_x} f_n(y).
    \end{equation}
    Combining with the above two inequalities, it is obvious that
    \begin{equation}
        \Gm_{\seq}(\N^+, X^-) + \epsilon > \nlimsup f_n(x^n) \geq \nlimsup \inf_{y \in N_x} f_n(y) > \Gamma\hbox{-}\limsup_{n\to\infty} f_n - \epsilon.
    \end{equation}
    As  $\epsilon$ is arbitrary, we have 
    \begin{equation}\label{eq:ineq_left}
        \Gm_{\seq}(\N^+, X^-)\lim_n f_n \geq \Gamma\hbox{-}\limsup_{n\to\infty} f_n.
    \end{equation}
    On the other hand, for $\forall n > 0$, suppose $B_n = N(x, \frac{1}{n})$ and we have 
    \begin{equation}
        \Gamma\hbox{-}\limsup_{n\to\infty} f_n \geq \nlimsup\inf_{y\in B_n} f_n(y).
    \end{equation}
    For any fixed $n$ and $\forall \epsilon > 0$, there exists $x^n \in B_n$ such that
    \begin{equation}
        \inf_{y \in B_n} f_n(y) > f_n(x^n) - \epsilon,
    \end{equation}
    By taking $\limsup$ from both sides we obtain
    \begin{equation}
        \nlimsup \inf_{y \in B_n} f_n(y) \geq \nlimsup f_n(x^n) - \epsilon.
    \end{equation}
    Note that $x^n \in B_n$ for any $n$, then $x^n \rightarrow x$ as $n$ goes to infinity. We can further get
    \begin{equation}
        \nlimsup f_n(x^n) \geq \inf_{x^n \rightarrow x} \nlimsup f_n(x^n) = \Gm_{\seq}(\N^+, X^-)\lim_n f_n.
    \end{equation}
    Therefore
    \begin{equation}
        \Gamma\hbox{-}\limsup_{n\to\infty} f_n \geq \nlimsup\inf_{y\in B_n} f_n(y)\geq \nlimsup f_n(x^n) - \epsilon\geq \Gm_{\seq}(\N^+, X^-)\lim_n f_n - \epsilon.
    \end{equation}
    
  This leads to
    \begin{equation}\label{eq:ineq_right}
        \Gamma\hbox{-}\limsup_{n\to\infty} f_n \geq \Gm_{\seq}(\N^+, X^-)\lim_n f_n.
    \end{equation}
Combining  the two inequality \eqref{eq:ineq_left} and \eqref{eq:ineq_right}, we obtain
    \begin{equation}
        \Gm_{\seq}(\N^+, X^-)\lim_n f_n=\Gamma\hbox{-}\limsup_{n\to\infty} f_n.
    \end{equation}
    
\end{proof}

\begin{lemma}[Lemma \ref{lemma1}]
	Let $X$ be a topological space, and let $(f_n)$ be a sequence of functionals from $X$ into $\bar{\R}= [-\infty, +\infty]$. If
	\begin{equation}
	    \Gm_{\seq}(\N, X^-)\lim_n f_n = f,
	\end{equation}
	then 
	\begin{equation}
	    \inf_X f \geq \limsup_n[\inf_X f_n].
	\end{equation}
	Moreover, if there exists a sequence $(x^n)$ converging to $x_0$ in $X$, with
	\begin{equation}
	    \liminf_n f_n(x^n) = \liminf_n [\inf_X f_n],
	\end{equation}
	then
	\begin{equation}
	    f(x_0) = \inf_X f = \lim_n[\inf_X f_n].
	\end{equation}
\end{lemma}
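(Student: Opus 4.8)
The plan is to unwind the hypothesis $\Gm_{\seq}(\N, X^-)\lim_n f_n = f$, which (by the convention in the text, together with Proposition \ref{connection}) means that for every $x\in X$ both sign choices agree, i.e.
\[
f(x) = \inf_{\{x^n\}\in\cS(x)}\limsup_n f_n(x^n)
= \inf_{\{x^n\}\in\cS(x)}\liminf_n f_n(x^n).
\]
For the first assertion I would fix an arbitrary $x\in X$ and an arbitrary $\epsilon>0$ and use the $\limsup$-identity to select a sequence $\{x^n\}\in\cS(x)$ with $\limsup_n f_n(x^n) < f(x)+\epsilon$. Since trivially $f_n(x^n)\ge \inf_X f_n$ for every $n$, passing to $\limsup$ gives $\limsup_n[\inf_X f_n]\le \limsup_n f_n(x^n) < f(x)+\epsilon$. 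Letting $\epsilon\downarrow 0$ yields $f(x)\ge \limsup_n[\inf_X f_n]$, and taking the infimum over $x\in X$ gives $\inf_X f \ge \limsup_n[\inf_X f_n]$.

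For the second assertion I would feed the given near-minimizing sequence $(x^n)$, which converges to $x_0$, into the $\liminf$-characterization of $f$ at the point $x_0$: since $\{x^n\}\in\cS(x_0)$, the definition forces $f(x_0)\le \liminf_n f_n(x^n) = \liminf_n[\inf_X f_n]$. Combining this with the inequality from the first part and the trivial bounds $\liminf_n[\inf_X f_n]\le \limsup_n[\inf_X f_n]$ and $\inf_X f\le f(x_0)$, I obtain the chain
\[
f(x_0)\ \le\ \liminf_n[\inf_X f_n]\ \le\ \limsup_n[\inf_X f_n]\ \le\ \inf_X f\ \le\ f(x_0),
\]
which forces equality throughout. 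In particular $\lim_n[\inf_X f_n]$ exists (its $\liminf$ and $\limsup$ coincide) and equals $f(x_0)=\inf_X f$, as claimed.

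I do not anticipate a genuine obstacle here: the whole argument is a short chain of inequalities. The only points requiring care are bookkeeping of the two $\Gm_{\seq}$ signs — ensuring the $\limsup$-version of the $\Gm_{\seq}$-limit is the one used to produce the upper bound in the first part, and the $\liminf$-version the one used in the second — and keeping the manipulations valid in $\bar{\R}=[-\infty,+\infty]$, which is automatic since $\inf$, $\liminf$, $\limsup$ are all well-defined there and preserve the relevant inequalities.
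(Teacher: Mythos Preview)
Your proposal is correct and follows essentially the same approach as the paper's proof: both use the $\limsup$-version of the $\Gm_{\seq}$-limit to obtain the first inequality by selecting a good recovery sequence, and the $\liminf$-version together with the given near-minimizing sequence to close the chain of inequalities in the second part. The only cosmetic difference is that you prove $f(x)\ge\limsup_n[\inf_X f_n]$ for every $x$ and then take the infimum, whereas the paper first picks a near-minimizer of $f$; this is an inessential reordering of the same argument.
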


\begin{proof}
	By the definition of $\inf_{X}$, for any $\epsilon > 0$, there exists $x \in X$ such that
	\begin{equation}
		f(x)  < \inf\limits_{x \in X} f +\epsilon.
	\end{equation}
	Using the fact that 
	\begin{equation}
		f = \Gamma_{\seq}(\N, X^-)\lim_n f_n,
	\end{equation}
	we have for $\forall x \in X$
	\begin{equation}
		f(x) = \inf_{x^n \rightarrow x} \limsup_n f_n(x^n) < \inf\limits_{x \in X} f +\epsilon.
	\end{equation}
	Therefore, there exists a sequence $(x^n)$ converging to $x$ such that
	\begin{equation}
		\limsup_n f_n(x^n) < \inf_{x \in X} f + 2\epsilon,
	\end{equation}
	and
	\begin{equation}
		\limsup_n (\inf_{x \in X} f_n) \leq \limsup_n f_n(x^n) < \inf_{x \in X} f + \epsilon.
	\end{equation}
	For  $\epsilon$ is arbitrary, we get the first part of the lemma:
	\begin{equation}
		\limsup_n(\inf_{x \in X} f_n) \leq \inf_{x \in X} f.
	\end{equation}
	Moreover, if there exist a sequence $(x^n)$ such that $ x^n \rightarrow x_0$ in $X$, with
	\begin{equation}
		\liminf_n f_n(x^n) = \liminf_n (\inf_{x \in X} f_n),
	\end{equation}
	then we can have 
	\begin{equation}
	\begin{aligned}
	    \inf_{x \in X} f \leq f(x_0) = \inf_{\bar{x}^n \rightarrow x_0} \liminf_n f_n(\bar{x}^n) & \leq \liminf_n f_n(x_n) = \liminf_n(\inf_{x \in X} f_n) \\
		& \leq \limsup(\inf_{x \in X} f_n) \leq \inf_{x \in X} f.
	\end{aligned}
	\end{equation}
	Therefore, all the inequalities are equal, i.e.
	\begin{equation}
		f(x_0) = \inf_{x \in X} f = \lim_n (\inf_{x \in X}f_n),
	\end{equation}
	where the last equality holds because $\underset{h}{\liminf}(\underset{x\in X}{\inf} f_n) = \underset{h}{\limsup}(\underset{x\in X}{\inf} f_n)  = \underset{h}{\lim}(\underset{x\in X}{\inf} f_n)$. 
\end{proof}

\begin{lemma}[Lemma \ref{lemma2}]
	Let $X, Y$ be two topological space; let $(f_n), (g_n)$ be two sequences of functionals from the product space $X \times Y$ to $\bar{\R}^{+} = [0, +\infty]$, and let $(x_0, y_0) \in X \times Y$. Suppose there exists $a, b \in \R^+$ such that
	\begin{equation}
	    \Gm_{\seq} (\N, X^-, Y) \lim_n f_n(x_0, y_0) = a,
	\end{equation}
	\begin{equation}
	    \Gm_{\seq} (\N, X, Y^-) \lim_n g_n(x_0, y_0) = b.
	\end{equation}
	Then it holds that
	\begin{equation}
	    \Gamma_{\seq} (\N, X^-, Y^-)\lim_n (f_n + g_n) (x_0,y_0) = a + b.
	\end{equation}
\end{lemma}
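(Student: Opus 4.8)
The plan is to pin down $a+b$ both from above and from below, using two \emph{different} representations of each hypothesized $\Gm_{\seq}$-limit, and then to collapse the two bounds with the trivial inequality $\liminf_n\le\limsup_n$. Spelling out the definitions, it suffices to prove
\[
\inf_{\{x^n\}\in\cS(x_0)}\ \inf_{\{y^n\}\in\cS(y_0)}\ \limsup_n(f_n+g_n)(x^n,y^n)\ \le\ a+b
\]
and
\[
\inf_{\{x^n\}\in\cS(x_0)}\ \inf_{\{y^n\}\in\cS(y_0)}\ \liminf_n(f_n+g_n)(x^n,y^n)\ \ge\ a+b ,
\]
since the left-hand side of the second display is always dominated by that of the first; the two estimates together then force both the $\limsup$- and the $\liminf$-version of $\Gm_{\seq}(\N,X^-,Y^-)\lim_n(f_n+g_n)$ to equal $a+b$, which is exactly the assertion.

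For the lower bound I would fix arbitrary sequences $\{x^n\}\to x_0$ and $\{y^n\}\to y_0$. Reading the hypothesis on $f$ with the $\N$- and $Y$-signs both taken as $-$ gives $a=\inf_{\{x'^n\}}\inf_{\{y'^n\}}\liminf_n f_n(x'^n,y'^n)$, whence $\liminf_n f_n(x^n,y^n)\ge a$; in the same way $\liminf_n g_n(x^n,y^n)\ge b$. Superadditivity of $\liminf$ on $[0,+\infty]$ then yields $\liminf_n(f_n+g_n)(x^n,y^n)\ge a+b$, and taking the infimum over the two sequences gives the claimed lower bound.

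The upper bound is the step that really exploits the asymmetry between the two hypotheses, and I expect it to be the main point of care. Given $\epsilon>0$, I would first use the representation $a=\inf_{\{x^n\}}\sup_{\{y^n\}}\limsup_n f_n(x^n,y^n)$ --- legitimate because in the hypothesis on $f$ the $Y$- and $\N$-signs may both be taken to be $+$ --- to select a single sequence $\{x^n\}\to x_0$ with $\sup_{\{y^n\}}\limsup_n f_n(x^n,y^n)<a+\epsilon$. For that now-fixed $\{x^n\}$, the representation $b=\sup_{\{x^n\}}\inf_{\{y^n\}}\limsup_n g_n(x^n,y^n)$ --- legitimate because in the hypothesis on $g$ the $X$-sign may be taken to be $+$ --- gives $\inf_{\{y^n\}}\limsup_n g_n(x^n,y^n)\le b$, so I can pick $\{y^n\}\to y_0$ with $\limsup_n g_n(x^n,y^n)<b+\epsilon$. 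Along this one pair of sequences, subadditivity of $\limsup$ gives
\[
\limsup_n(f_n+g_n)(x^n,y^n)\le\limsup_n f_n(x^n,y^n)+\limsup_n g_n(x^n,y^n)<(a+\epsilon)+(b+\epsilon),
\]
the first term being bounded by $\sup_{\{y'^n\}}\limsup_n f_n(x^n,y'^n)<a+\epsilon$. Hence the iterated infimum over all sequences is $\le a+b+2\epsilon$, and letting $\epsilon\downarrow0$ closes the upper bound.

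The one genuine obstacle is bookkeeping with the many equivalent forms of the two hypotheses: the argument only works if $a$ is read as ``$\inf$ in $x$ of $\sup$ in $y$'' (so that a single good $x$-sequence works simultaneously against every $y$-sequence) while $b$ is read as ``$\sup$ in $x$ of $\inf$ in $y$'' (so that the $x$-sequence already committed to still admits a good $y$-sequence). Once the representations are aligned this way, everything reduces to sub/superadditivity of $\limsup$/$\liminf$ and a routine $\epsilon$-argument; the $[0,+\infty]$-valuedness is used only to preclude $\infty-\infty$ and is otherwise inessential.
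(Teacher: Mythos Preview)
Your proposal is correct and follows essentially the same approach as the paper: both split the proof into the upper estimate on the $\limsup$-version and the lower estimate on the $\liminf$-version, read the hypothesis on $f$ as $a=\inf_{x^n}\sup_{y^n}\limsup_n f_n$ and the hypothesis on $g$ as $b=\sup_{x^n}\inf_{y^n}\limsup_n g_n$ for the upper bound, and use the double-$\inf$ $\liminf$ readings for the lower bound, concluding via sub/superadditivity of $\limsup$/$\liminf$ and an $\epsilon$-argument. The only cosmetic difference is the order in which the sequences are selected: you first commit to a good $x$-sequence for $f$ and then a good $y$-sequence for $g$, whereas the paper keeps $x^n$ generic, picks $\bar{y}^n$ near-optimal for $g$, bounds $f$ by passing to $\sup_{y^n}$, and only then takes the infimum over $x^n$; the underlying inequalities are identical.
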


\begin{proof}
	According to the definition of $\Gm$-limits, we have
	\begin{equation}
		\Gm_{\seq}(\N^+, X^-, Y)\lim_n f_n(x_0, y_0) = \inf_{x^n \rightarrow x_0} \sup_{y^n \rightarrow y_0} \limsup_{n} f_n(x^n, y^n) = a,
	\end{equation}
	\begin{equation}
		\Gm_{\seq} (\N^+, X, Y^-) \lim_n g_n(x_0, y_0) = \sup_{x^n \rightarrow x_0} \inf_{y^n \rightarrow y_0} \limsup_n g_n(x^n, y^n) = b,
	\end{equation}
	\begin{equation}
		\Gm_{\seq} (\N^+, X^-, Y^-) \lim_n(f_n + g_n)(x_0, y_0) = \inf_{x^n \rightarrow x_0} \inf_{y^n \rightarrow y_0} \limsup_n (f_n(x^n, y^n) + g_n(x^n, y^n)).
	\end{equation}
	Notice that for any $\{(x^n, y^n)\} \in X \times Y$,
	\begin{equation}
		\limsup_n (f_n + g_n)(x^n, y^n) \leq \limsup_n f_n(x^n, y^n) + \limsup_n g_n(x^n, y^n),
	\end{equation}
	For any $\epsilon > 0$, there exists $\bar{y}^n \rightarrow y_0$ such that
	\begin{equation}
		\limsup_n g_n(x^n, \bar{y}^n) \leq \inf_{y^n \rightarrow y_0} \limsup_n g_n(x^n, y^n) + \epsilon.
	\end{equation}
	Then we have that
	\begin{equation}
		\begin{aligned}
			& \inf_{y^n \rightarrow y_0} \limsup_n (f_n(x^n, y^n) + y_n(x^n, y^n)) \\
			& \leq \inf_{y^n \rightarrow y_0}[\limsup_n f_n(x^n, y^n) + \limsup_n g_n(x^n, y^n)] \\
			& \leq \limsup_n f_n(x^n, \bar{y}^n) + \limsup_n g_n(x^n, \bar{y}^n)\\
			&\leq \sup_{y^n \rightarrow y_0} \limsup_n f_n(x^n, y^n) + \inf_{y^n \rightarrow y_0} \limsup_n g_n(x^n, y^n) + \epsilon.
		\end{aligned}
	\end{equation}
	Since $\epsilon$ is arbitrary, then we obtain that
	\begin{equation}
	\begin{aligned}
	    & \inf_{y^n \rightarrow y_0} \limsup_n (f_n(x^n, y^n) + g_n(x^n, y^n)) \\
	    & \leq \sup_{y^n \rightarrow y_0} \limsup_n f_n(x^n, y^n) + \inf_{y^n \rightarrow y_0} \limsup_n g_n(x^n, y^n).
	\end{aligned}
	\end{equation}
	Similarly, for $x^n$ we also have
	\begin{equation}
	\begin{aligned}
    	& \inf_{x^n \rightarrow x_0} \inf_{y^n \rightarrow y_0} \limsup_n (f_n(x^n, y^n) + g_n(x^n, y^n))\\ \leq 
    	& \inf_{x^n \rightarrow x_0} \sup_{y^n \rightarrow y_0} \limsup_{n} f_n(x^n, y^n) + \sup_{x^n \rightarrow x_0} \inf_{y^n \rightarrow y_0} \limsup_n g_n(x^n, y^n),
	\end{aligned}
	\end{equation}
	which leads to
	\begin{equation}
		\Gm_{\seq} (\N^+, X^-, Y^-) \lim_n(f_n + g_n)(x_0, y_0) \leq a + b.
	\end{equation}
	On the other hand, by the definition we can change the $\sup$ to the $\inf$ and get that
	\begin{equation}
		\Gm_{\seq}(\N^-, X^-, Y)\lim_n f_n(x_0, y_0) = \inf_{x^n \rightarrow x_0} \inf_{y^n \rightarrow y_0} \liminf_{n   } f_n(x^n, y^n) = a,
	\end{equation}
	\begin{equation}
		\Gm_{\seq} (\N^-, X, Y^-) \lim_n g_n(x_0, y_0) = \inf_{x^n \rightarrow x_0} \inf_{y^n \rightarrow y_0} \liminf_n g_n(x^n, y^n) = b,
	\end{equation}
	\begin{equation}
		\Gm_{\seq} (\N^-, X^-, Y^-) \lim_n(f_n + g_n)(x_0, y_0) = \inf_{x^n \rightarrow x_0} \inf_{y^n \rightarrow y_0} \liminf_n (f_n(x^n, y^n) + g_n(x^n, y^n)).
	\end{equation}
	It is obvious that
	\begin{equation}
		\begin{aligned}
			& \inf_{y^n \rightarrow y_0} \liminf_n (f_n(x^n, y^n) + g_n(x^n, y^n)) \\
			& \geq \inf_{y^n \rightarrow y_0}[\liminf_n f_n(x^n, y^n) + \liminf_n g_n(x^n, y^n)] \\
			& \geq \inf_{y^n \rightarrow y_0} \liminf_n f_n(x^n, y^n) + \inf_{y^n \rightarrow y_0} \liminf_n g_n(x^n, y^n),
		\end{aligned}
	\end{equation}
	therefore we obtain
	\begin{equation}
		\begin{aligned}
		& \inf_{x^n \rightarrow x_0} \inf_{y^n \rightarrow y_0} \liminf_n (f_n(x^n, y^n) + g_n(x^n, y^n)) \\ & \geq \inf_{x^n \rightarrow x_0}[\inf_{y^n \rightarrow y_0} \liminf_n f_n(x^n, y^n) + \inf_{y^n \rightarrow y_0} \liminf_n g_n(x^n, y^n)] \\
		& \geq \inf_{x^n \rightarrow x_0} \inf_{y^n \rightarrow y_0} \liminf_n g_n(x^n, y^n) + \inf_{x^n \rightarrow x_0} \inf_{y^n \rightarrow y_0} \liminf_{n} f_n(x^n, y^n) \\
		& = a + b.
		\end{aligned}
	\end{equation}
	Combining the two inequalities proved before, we obtain
	\begin{equation}
		\Gm_{\seq} (\N, X^-, Y^-) \lim_n(f_n + g_n)(x_0, y_0) = a + b.
	\end{equation} 
\end{proof}

\begin{lemma}[Lemma \ref{lemma3}]
    Suppose $\{E_n\}$ is a sequence of sets in space $X \times Y$. If there exists a set $E_\infty \in X \times Y$ satisfying the following two conditions:
    \begin{itemize}
        \item If $x^n \rightarrow x$, $y^n \rightarrow y$ and $(x^n, y^n) \in E_n$ for infinitely many $n$, then $(x, y) \in E_\infty$;
        \item If $(x,y) \in E_\infty$ and $x^n \rightarrow x$, then there exists $y^n \rightarrow y$ such that $(x^n, y^n) \in E_n$ for $n$ large enough,
    \end{itemize}
    then $\one_{E_\infty} = \Gm_{\seq}(\N, X, Y^-) \underset{n}{\lim} \one_{E_n}$.
\end{lemma}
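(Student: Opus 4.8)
The plan is to unfold the definition of $\Gm_{\seq}(\N,X,Y^-)\lim_n \one_{E_n}$ and to verify directly, at each point $(x_0,y_0)\in X\times Y$, that it equals $\one_{E_\infty}(x_0,y_0)$, splitting into the two cases $(x_0,y_0)\in E_\infty$ and $(x_0,y_0)\notin E_\infty$. Since every $\one_{E_n}$ as well as $\one_{E_\infty}$ takes values in $\{0,+\infty\}$, all the $\limsup$/$\liminf$ and $\sup$/$\inf$ operations collapse to statements about whether $(x^n,y^n)\in E_n$ along sequences, which is exactly what the two hypotheses control. To justify writing $\Gm_{\seq}(\N,X,Y^-)$ — i.e.\ to show the limit does not depend on the signs attached to $\N$ or to $X$ — I would carry the argument out for all four sign choices $\epsilon_0,\epsilon_1\in\{+1,-1\}$ at once, with $\epsilon_2=-1$ fixed so that the operator over sequences in $\cS(y_0)$ is always $\inf$; I would also note at the outset that $\cS(x_0)$ and $\cS(y_0)$ are nonempty, since they contain the constant sequences, so none of the infima or suprema involved is vacuous.

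For the case $(x_0,y_0)\in E_\infty$, I would fix an arbitrary $\{x^n\}\in\cS(x_0)$ and invoke the second hypothesis to produce $\{y^n\}\in\cS(y_0)$ with $(x^n,y^n)\in E_n$ for all large $n$; then $\one_{E_n}(x^n,y^n)=0$ eventually, so $\mathcal{G}(\epsilon_0)$ applied to this value sequence equals $0$ regardless of $\epsilon_0$, and hence the inner $\inf$ over $\cS(y_0)$ is $0$. Since $\{x^n\}$ was arbitrary, applying $\mathcal{L}(\epsilon_1)$ (whether $\sup$ or $\inf$) to the resulting constant $0$ yields $0=\one_{E_\infty}(x_0,y_0)$.

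For the case $(x_0,y_0)\notin E_\infty$, I would first show that $\mathcal{G}(\epsilon_0)$ applied to $\big(\one_{E_n}(x^n,y^n)\big)_n$ equals $+\infty$ for every $\{x^n\}\in\cS(x_0)$ and $\{y^n\}\in\cS(y_0)$: if it were finite, then, since the sequence is $\{0,+\infty\}$-valued, we would have $(x^n,y^n)\in E_n$ for infinitely many $n$, and the first hypothesis would force $(x_0,y_0)\in E_\infty$, a contradiction. Consequently the inner $\inf$ over $\cS(y_0)$ is $+\infty$ for every $\{x^n\}$, and $\mathcal{L}(\epsilon_1)$ of the constant $+\infty$ is $+\infty=\one_{E_\infty}(x_0,y_0)$. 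Putting the two cases together, over all choices of $\epsilon_0,\epsilon_1$, gives $\one_{E_\infty}=\Gm_{\seq}(\N,X,Y^-)\lim_n\one_{E_n}$.

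I do not anticipate a substantive difficulty. The one spot deserving a moment's attention is the sub-case $\epsilon_0=+1$ (the $\limsup$) in the second case: one must observe that a \emph{finite} $\limsup$ of a $\{0,+\infty\}$-valued sequence already forces $(x^n,y^n)\in E_n$ for all large $n$, hence for infinitely many $n$, so that the first hypothesis — which is phrased with ``for infinitely many $n$'' — indeed applies. Everything else is routine substitution into the definitions.
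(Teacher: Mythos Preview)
Your proposal is correct and follows essentially the same approach as the paper: split into the cases $(x_0,y_0)\in E_\infty$ and $(x_0,y_0)\notin E_\infty$, invoke the second hypothesis to build a recovery sequence in the first case and use the first hypothesis by contraposition in the second. If anything you are more careful than the paper, which only writes out one sign combination explicitly in each case, whereas you verify that all four choices of $(\epsilon_0,\epsilon_1)$ yield the same value.
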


\begin{proof}
    First by the second condition, for $\forall (x, y) \in E_\infty$ and $x^n \rightarrow x$, there exists $y^n \rightarrow y$ and $(x^n, y^n) \in E_n$ for large $n$, therefore $\one_{E_n}(x^n, y^n) = 1$ and we have 
    \begin{equation}
        \Gm_{\seq}(\N, X, Y^-) \lim_n \one_{E_n}(x, y) = \inf_{x^n \rightarrow x} \inf_{y^n \rightarrow y} \liminf_n \one_{E_n}(x^n, y^n) = 1 = \one_{\infty}(x, y).
    \end{equation}
    
    On the other hand, suppose $(x,y)$ is an arbitrary point not in $ E_\infty$. Then by the first condition, for any sequence $\{x^n\}$ and $\{y^n\}$ with limits equal to $x$ and $y$ respectively, $(x^n, y^n) \in E_n$ holds for only finite many $n$, otherwise $(x,y) \in E_\infty$.  Therefore we obtain that
    \begin{equation}
        \Gm_{\seq}(\N, X, Y^-) \lim_n \one_{E_n}(x, y) = \sup_{x^n\rightarrow}\inf_{y^n \rightarrow y}\limsup_n \one_{E_n}(x^n, y^n) = +\infty = \one_{E_\infty}(x, y).
    \end{equation}
    
    Therefore we conclude that $\one_{E_\infty} = \Gm_{\seq}(\N, X, Y^-) \underset{n}{\lim} \one_{E_n}$. 
\end{proof}

\begin{theorem}[Theorem \ref{main}]
	Suppose $X$ and $Y$ are two topological spaces, $(J_n)$ is a $\Gm$-convergent sequence of functionals defined on the product space $X \times Y$ whose $\Gm$-limits is denoted by $J_\infty$. Let $(E_n)$ be a sequence of set in $X \times Y$ and the sequence of indicator functions $(\one_{E_n})$ is also $\Gm$-convergent with $\Gm$-limit equal to $\one_{E_\infty}$, that is
	\begin{equation}
		J_\infty = \Gamma_{\seq}(\N, X^-, Y)\lim_n J_n,
	\end{equation}
	\begin{equation}
		\one_{E_\infty} = \Gamma_{\seq}(\N, X, Y^-)\lim_n \one_{E_n}.
	\end{equation}
    And for every $h \in \N^+$, $(x_n, y_n)$ is an optimal pair of the control problem
	\begin{equation}
		\min_{X \times Y} (J_n + \one_{E_n}).
	\end{equation}
	If $x_n \rightarrow x_\infty$ in $X$ and $y_n \rightarrow y_\infty$ in $Y$, then $(x_\infty, y_\infty)$ is an optimal pair of the control problem
	\begin{equation}
		\min_{X \times Y} (J_\infty + \one_{E_\infty}).
	\end{equation}
\end{theorem}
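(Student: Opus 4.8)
The plan is to realize $J_n+\one_{E_n}$ as a sum to which Lemma~\ref{lemma2} applies, pass from the ``split'' sequential $\Gm$-limit on $X^-,Y^-$ to the ordinary sequential $\Gm$-limit on the product space $X\times Y$, invoke Proposition~\ref{connection} to upgrade this to usual $\Gm$-convergence in the sense of Definition~\ref{def:Gmconv}, and finally apply Lemma~\ref{lemma1} to obtain the statement about optimal pairs. First I would fix an arbitrary $(x_0,y_0)\in X\times Y$ and apply Lemma~\ref{lemma2} with $f_n=J_n$ and $g_n=\one_{E_n}$ (both $[0,+\infty]$-valued, as in our setting where $J_n$ is a sum of norms and $\one_{E_n}\ge 0$). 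Its hypotheses are exactly the two assumed convergences read at $(x_0,y_0)$, with $a=J_\infty(x_0,y_0)$ and $b=\one_{E_\infty}(x_0,y_0)$, so it yields
\begin{equation*}
\Gm_{\seq}(\N,X^-,Y^-)\lim_n (J_n+\one_{E_n})(x_0,y_0)=J_\infty(x_0,y_0)+\one_{E_\infty}(x_0,y_0),
\end{equation*}
and since $(x_0,y_0)$ was arbitrary this identity holds on all of $X\times Y$.

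Next I would note that a sequence $(x^n,y^n)$ converges to $(x_0,y_0)$ in the product topology precisely when $x^n\to x_0$ in $X$ and $y^n\to y_0$ in $Y$; hence the iterated infimum over $\cS(x_0)$ and $\cS(y_0)$ in the split definition equals the single infimum over $\cS\big((x_0,y_0)\big)$ in $X\times Y$, and likewise for the suprema. Consequently $\Gm_{\seq}(\N^\pm,X^-,Y^-)\lim_n g_n=\Gm_{\seq}(\N^\pm,(X\times Y)^-)\lim_n g_n$ for $g_n=J_n+\one_{E_n}$. Applying Proposition~\ref{connection} on the space $X\times Y$, together with the identity of the previous step (valid for both signs of $\N$), shows that the $\Gm$-$\limsup$ and $\Gm$-$\liminf$ of $(J_n+\one_{E_n})$ in the sense of Definition~\ref{def:Gmconv} both equal $J_\infty+\one_{E_\infty}$, i.e. $(J_n+\one_{E_n})$ $\Gm$-converges to $J_\infty+\one_{E_\infty}$.

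For the conclusion about optimal pairs: since each $(x_n,y_n)$ minimizes $J_n+\one_{E_n}$ over $X\times Y$ we have $(J_n+\one_{E_n})(x_n,y_n)=\inf_{X\times Y}(J_n+\one_{E_n})$, so trivially $\liminf_n (J_n+\one_{E_n})(x_n,y_n)=\liminf_n\big[\inf_{X\times Y}(J_n+\one_{E_n})\big]$, and $(x_n,y_n)\to(x_\infty,y_\infty)$ in $X\times Y$ by hypothesis. Lemma~\ref{lemma1}, applied on the space $X\times Y$ with the $\Gm_{\seq}$-convergence established above, then gives $(J_\infty+\one_{E_\infty})(x_\infty,y_\infty)=\inf_{X\times Y}(J_\infty+\one_{E_\infty})=\lim_n\big[\inf_{X\times Y}(J_n+\one_{E_n})\big]$, which is exactly the assertion that $(x_\infty,y_\infty)$ is an optimal pair of $\min_{X\times Y}(J_\infty+\one_{E_\infty})$.

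I do not expect a genuine obstacle: the proof is a chaining of Lemma~\ref{lemma2}, Proposition~\ref{connection} and Lemma~\ref{lemma1}. The points needing care are the sign bookkeeping for the sequential $\Gm$-limits---in particular checking that the split product limit with complementary signs $X^-,Y^-$ coincides with the ordinary sequential $\Gm$-limit on $X\times Y$, so that Proposition~\ref{connection} can be invoked---and verifying that the hypotheses of Lemma~\ref{lemma2}, stated pointwise and for $[0,+\infty]$-valued functionals, hold at every point of $X\times Y$; the latter is clean under the nonnegativity of $J_n$ that holds in the concrete Beckmann problem.
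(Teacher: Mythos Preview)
Your proposal is correct and follows essentially the same route as the paper: apply Lemma~\ref{lemma2} to obtain $\Gm_{\seq}(\N,X^-,Y^-)\lim_n(J_n+\one_{E_n})=J_\infty+\one_{E_\infty}$, identify this split limit with $\Gm_{\seq}(\N,(X\times Y)^-)\lim_n(J_n+\one_{E_n})$ via the product-topology characterization of convergent sequences, and then invoke Lemma~\ref{lemma1} on $X\times Y$ with the minimizing sequence $(x_n,y_n)$. Your added step through Proposition~\ref{connection} is not needed for the optimality conclusion (Lemma~\ref{lemma1} is already phrased in terms of $\Gm_{\seq}$), but it correctly recovers the first assertion of the main-text statement of the theorem; your remark that Lemma~\ref{lemma2} requires $J_n\ge 0$, which is implicit rather than stated in the abstract theorem, is also a fair point.
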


\begin{proof}
	First we use Lemma \ref{lemma3}. Since
	\begin{equation}
		J_\infty = \Gamma_{\seq}(\N, X^-, Y)\lim_n J_n,
	\end{equation}
	\begin{equation}
		\one_{E_\infty} = \Gamma_{\seq}(\N, X, Y^-)\lim_n \one_{E_n},
	\end{equation}
	then we have
	\begin{equation}
		J_\infty + \one_{E_\infty} = \Gm_{\seq}(\N, X^-, Y^-) \lim_n(J_n + \one_{E_n}).
	\end{equation}
	Notice that
	\begin{equation}
		\begin{aligned}
			\Gm_{\seq}(\N, X^-, Y^-) \lim_n(J_n + \one_{E_n}) & = \inf_{x^n \rightarrow x} \inf_{y^n \rightarrow y} \liminf_n (J_n + \one_{E_n}) (x^n, y^n) \\
			& = \inf_{\substack{x^n \rightarrow x \\ y^n \rightarrow y}}\liminf_n (J_n + \one_{E_n}) (x^n, y^n)\\
			& = \inf_{(x^n, y^n) \rightarrow (x, y)}\liminf_n (J_n + \one_{E_n}) (x^n, y^n)\\
			& = \inf_{(x^n, y^n) \rightarrow (x, y)}\limsup_n (J_n + \one_{E_n}) (x^n, y^n)\\
			& = \Gm_{\seq}(\N, (X \times Y)^-) \lim_n(J_n + \one_{E_n}),
		\end{aligned}		
	\end{equation}
	therefore we obtain that
	 \begin{equation}
	 	J_\infty + \one_{E_\infty} = \Gm_{\seq}(\N, (X \times Y)^-) \lim_n(J_n + \one_{E_n}).
	 \end{equation}
 	Moreover, from $x^n \rightarrow x_\infty$ in $X$ and $y^n \rightarrow y_\infty$ in $Y$ we can get $(x^n, y^n) \rightarrow (x_\infty, y_\infty)$ in $X \times Y$. Since $(x^n, y^n)$ is an optimal pair of the control problem
 	\begin{equation}
 		\min_{X\times Y}(J_n + \one_{E_n})
 	\end{equation}
 	for every $n \in \N^+$, then by the definition we obtain that
 	\begin{equation}
 		\min_{X\times Y} (J_n + \one_{E_n}) = (J_n + \one_{E_n})(x^n, y^n).
 	\end{equation}
 	Therefore the sequence $\{(x^n, y^n)\}$ satisfies the condition in Lemma \ref{lemma1}
 	\begin{equation}
 		\liminf_n (J_n + \one_{E_n})(x^n, y^n) = \liminf_n \inf_{X \times Y} (J_n + \one_{E_n}),
 	\end{equation}
 	and using Lemma \ref{lemma1} we finally get that
 	\begin{equation}
 		(J_\infty + \one_{E_\infty})(x_\infty, y_\infty) = \inf_{X\times Y} (J_\infty + \one_{E_\infty}).
 	\end{equation}
\end{proof}

\end{document}